\newcommand{\conv}{\mbox{conv}}
\newtheorem{lem}{Lemma}
\newtheorem{thm}[lem]{Theorem}
\newtheorem{exa}[lem]{Example}
\newtheorem{cor}[lem]{Corollary}
\newtheorem{con}[lem]{Conjecture}
\newtheorem{prop}[lem]{Proposition}
\newtheorem{df}[lem]{Definition}
\begin{document}
\title{The Weak-Map Order and Polytopal Decompositions of Matroid Base Polytopes}
\author{Kenji Kashiwabara\\
Department of Systems Science, University of Tokyo\\
  3-8-1 Komaba, Meguro, Tokyo, 153-8902, Japan
 }
\maketitle
\begin{abstract}
The weak-map order on the matroid base polytopes is the partial order defined by inclusion. 
Lucas proved that the base polytope of no binary matroid includes 
the base polytope of a connected matroid.
A matroid base polytope is said to be decomposable when it has a polytopal decomposition which consists of at least two matroid base polytopes.
 We shed light on the relation between the decomposability and the weak-map order of matroid base polytopes.
We classify matroids into five types with respect to the weak-map order and decomposability.
We give an example of a matroid in each class. Moreover, we give a counterexample to a conjecture proposed by Lucas, which says that, when one matroid base polytope covers another matroid base polytope with respect to inclusion, the latter matroid base polytope should be a facet of the former matroid base polytope. 
   
\end{abstract}


\section{Introduction}\label{sec:intro}

The set ${\cal B}(M)$ of the bases of a matroid $M$ is called a {\it matroid base system}.
The base system of $M$ can be identified with the base polytope $B(M)$.
The weak-map order that we consider is a partial ordering defined on the matroid base systems of the fixed rank and the fixed finite ground set $E$.
The weak-map order is defined according to the inclusion relation among their bases. 
For matroids $M_1$ and $M_2$, $M_1 \succeq M_2$ in the {\it weak-map order}  is defined by ${\cal B}(M_2)\subseteq {\cal B}(M_1)$.
We consider the polytopal decomposition of the base polytope of a matroid. Roughly
speaking, a matroid base polytope is said to be {\it decomposable} when it has a
polytopal decomposition which consists of at least two matroid base polytopes of the same dimension. That is, $B(M)=\bigcup_i B(M_i)$ where $M_i$ are matroids and any intersection $B(M_i)\cap B(M_j)$ is a facet of both $B(M_i)$ and $B(M_j)$.
When a matroid base polytope has a decomposition which has exactly two matroid base polytopes of the same dimension, such a polytope is said to be {\it 2-decomposable} (or have a hyperplane-split in the literature.)

The weak-map order is related to the polytopal decomposition of a base polytope.
By definition, the base polytopes obtained from a polytopal decomposition of the base polytope $B(M)$
 are smaller than the base polytope $B(M)$ with respect to weak-map order.
Note that the base polytope of a connected matroid is of dimension $|E|-1$.
Therefore a matroid base polytope which is minimal in all the connected matroids on $E$ with respect to weak-map order is not decomposable.

The connected matroids are classified into the following five types. (a) Binary matroids. (b) Non-binary but minimal matroids in the connected matroids with respect to inclusion. (c) Non-minimal but indecomposable matroids. (d) Non-2-decomposable but decomposable matroids. (e) 2-decomposable matroids.
An example of type (b) of rank 4 can be found in Lucas \cite{Lucas75}. 
We give an example of type (b) of rank 3 in Example \ref{exa:minimal}.
We give an example of type (c) in Example \ref{exa:nonminimal}.
We give an example of type (d) in Example \ref{exa:typed}.

Lucas \cite{Lucas75} conjectured that, when one matroid base polytope covers another matroid base polytope in the poset of the matroid base polytopes with
respect to inclusion, the latter matroid base polytope might be a facet 
of the former matroid base polytope. We give a counterexample to this conjecture in Example \ref{exa:lucascon} and Theorem \ref{thm:lucascon}.

The problem of polytopal decomposition arises from that of M-convex functions.
 An {\it integral base polytope} is the base polytope of some integral submodular function.
A matroid base polytope is the integral base polytope for the rank function of a matroid.
  An integral base set is the set of the lattice points of an integral base polytope.
 An M-convex function is a function which satisfies some kind of exchange axiom.
 The domain of an M-convex function is an integral base set(Murota \cite{Murota96a,Murota96b}).
 A function defined on an integral base set induces a coherent polytopal decomposition of the integral base
polytope that is the convex hull of the integral base set.
 Such a coherent polytopal decomposition consists of integral base polytopes
when the function is an M-convex function.

Kashiwabara \cite{Kashiwabara01} investigated a polytopal
decomposition which consists of the integral base polytopes of an integral submodular function,
 called an integral-base decomposition, and showed that
 an integral base polytope can be divided into two integral base polytopes if and only if
 there exists a hyperplane such that the cross-section of the polytope by it is an integral base polytope.
It was shown that any integral base
polytope which is not a matroid base polytope by any translation is 2-decomposable.
 Therefore we have only to consider a polytopal decomposition of a matroid base polytope when we consider its decomposability.

This paper is organized as follows.

In Section \ref{sec:polytope}, we consider the representations of the matroid independence 
polytopes and matroid base polytopes by linear inequalities which have 01-normal vectors. 
A matroid base polytope can be determined by the family of sets which satisfy linear inequalities. We call such a family a matroid base system. 
A flat with its rank behaves like a linear inequality for a matroid base system.
We often identify a matroid base polytope with a matroid base system.
We investigate the combinatorial structures of matroid base systems.

In Section \ref{sec:weak}, we consider the weak-map order and polytopal decompositions of matroid base systems. 
In Section \ref{sec:2decomp}, we consider the 2-decomposability of a matroid base system.

In Section \ref{sec:rank3}, we consider the decomposability of a matroid base system of rank 3 in terms of graphs.
Matroid base systems of rank 3 will be used as important examples in Section \ref{sec:further}.

Section \ref{sec:further} is the main part of this paper. It consists of two subsections.
In Section \ref{subsec:classification}, by using Theorem \ref{thm:lucas}, we classify matroid base systems into five types.
In Section \ref{subsec:counter}, we give a counterexample to the conjecture proposed by Lucas.

\section{Representations of matroid systems}\label{sec:polytope}

Let $E$ be a finite ground set with $|E|\geq 2$ throughout this paper.

\subsection{Representation of independence systems}

In this subsection, we consider independence systems which may not be a matroid independence system. Moreover we introduce notation to represent a set system by 
linear inequalities and hyperplanes with 01-coefficients. This notation is different from standard one. However, we believe that this notation is useful to describe matroid systems from a polytopal viewpoint.

We prepare notation for hyperplanes and linear inequalities whose
coefficients are in $\{0,1\}$. 
A set system, which is a family of sets, can be represented by linear inequalities and hyperplanes.
For a nonnegative integer $a$ and $A\subseteq E$, we call $(A,a)_\leq =\{I\subseteq E |\ |I \cap A| \leq a\}$ a (closed) {\it linear inequality}. 
A family $(A,a)_\leq$ is identified with the 01-points $\{p\in
\{0,1\}^E|\langle \chi_A ,p\rangle \leq a\}$ where $\chi_A$ is the incidence vector of $A$ and $\langle \chi_A, p\rangle$ is the inner product of $\chi_A$ and $p$.
We write $(A,a)_>=\{I\subseteq E |\ |I \cap A| >a\}$ and so on.
Denote $(A,a)_==\{I\subseteq E |\ |I \cap A| = a\}$ where
$a$ is an integer with $0 \leq a \leq |A|$ and $\emptyset\neq A \subseteq E$. A
family $(A,a)_=$ is identified with the 01-points $\{p\in
\{0,1\}^E|\langle \chi_A ,p\rangle = a\}$ on hyperplane $\langle\chi_A,p\rangle =a$. Therefore we call $(A,a)_=$ a {\it hyperplane}.

\begin{lem}\label{lem:facetinc}
$(A_2,a_2)_\leq\subseteq (A_1,a_1)_\leq$ if and only if $|A_1-A_2|\leq a_1 - a_2$.
\end{lem}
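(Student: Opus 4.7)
The plan is to prove both implications by direct set-theoretic bookkeeping, using the disjoint partition
$$I \cap A_1 \;=\; (I \cap A_1 \cap A_2) \;\sqcup\; (I \cap (A_1 - A_2))$$
as the main tool for comparing $|I \cap A_1|$ with $|I \cap A_2|$.

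For the $(\Leftarrow)$ direction, I would assume $|A_1 - A_2| \leq a_1 - a_2$ and take any $I$ with $|I \cap A_2| \leq a_2$. The partition yields
$$|I \cap A_1| \;\leq\; |I \cap A_2| + |A_1 - A_2| \;\leq\; a_2 + (a_1 - a_2) \;=\; a_1,$$
so $I \in (A_1,a_1)_\leq$, proving one containment.

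For the $(\Rightarrow)$ direction, the plan is to build a single witness set $I$ that saturates the $A_2$-constraint while pulling in as much of $A_1$ as possible. Concretely, I would take $I = (A_1 - A_2) \cup C$ with $C \subseteq A_1 \cap A_2$ of size $a_2$, so that $|I \cap A_2| = |C| = a_2$ and hence $I \in (A_2,a_2)_\leq \subseteq (A_1,a_1)_\leq$. Evaluating on this $I$ gives $|A_1 - A_2| + a_2 = |I \cap A_1| \leq a_1$, which is exactly the desired inequality.

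The main obstacle is the degenerate parameter range $|A_1 \cap A_2| < a_2$, where the set $C$ cannot be chosen of size $a_2$. In that regime one has to fall back on $C = A_1 \cap A_2$, test the witness $I = A_1$ (which satisfies $|I \cap A_2| = |A_1 \cap A_2| < a_2$), deduce $|A_1| \leq a_1$, and then exploit the standing non-triviality of the inequalities $(A_i,a_i)_\leq$ (as is understood for the facet-like 01-inequalities used in the rest of the paper) to recover the bound. This case analysis is the only subtlety; the rest of the argument is just the counting identity above.
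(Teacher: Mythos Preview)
Your argument is essentially correct and follows the same route as the paper. The $(\Leftarrow)$ direction is identical (the paper phrases it contrapositively but uses the same inequality $|D\cap A_1|\leq |D\cap A_2|+|A_1-A_2|$). For $(\Rightarrow)$, both you and the paper pick a witness that is a subset of $A_1$ containing $A_1-A_2$: the paper argues by contrapositive and chooses the witness of size $a_1+1$, while you argue directly and choose it of size $|A_1-A_2|+a_2$. These are two parameterizations of the same construction.

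One remark on your degenerate case $|A_1\cap A_2|<a_2$. From the witness $I=A_1$ you correctly deduce $|A_1|\leq a_1$, but the way to finish is not to ``recover the bound'' from this inequality---indeed, without further hypotheses the bound $|A_1-A_2|\leq a_1-a_2$ can genuinely fail (take $A_1,A_2$ disjoint with $|A_1|\leq a_1$ and $a_2>0$). What actually happens is that non-triviality of $(A_1,a_1)_\leq$ means exactly $a_1<|A_1|$, which \emph{contradicts} $|A_1|\leq a_1$, so the degenerate case simply does not occur. The paper's own proof carries the very same implicit hypothesis: in its second case it needs a set $D\subseteq A_1$ with $|D|=a_1+1$, which again forces $|A_1|>a_1$. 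With that clarification your proof is complete and matches the paper's.
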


\begin{proof}
 Assume $|A_1-A_2|\leq a_1 - a_2$.
 Let $D\notin (A_1,a_1)_\leq$, that is, $D\in (A_1,a_1)_>$.
 Then $|D\cap A_1|>a_1$.
 Therefore $a_1< |D\cap A_1|\leq |D\cap A_2|+|A_1-A_2|\leq |D\cap A_2|+a_1-a_2.$
 Therefore we have $|D\cap A_2| > a_2$ and $D\notin (A_2,a_2)_\leq$.

\vspace{3mm}

Conversely, assume $|A_1-A_2|>a_1 - a_2$.

In the case $|A_1-A_2|>a_1+1$, we can take $D\in (A_1,a_1)_>$ so
that $D\subseteq A_1-A_2$. We have $D\in (A_2,a_2)_\leq$ because of $D\cap A_2=\emptyset$.

In the case $|A_1-A_2|\leq a_1+1$, we can take $D\in (A_1,a_1)_>$ so
that $|D|= a_1+1$ and $A_1-A_2\subseteq D$.
At that time, we have $|D\cap A_2|+|D\cap (A_1-A_2)|\leq |D|=a_1+1$ since $D\cap A_2$ and $D\cap (A_1-A_2)$ are disjoint.
 By assumption $|A_1-A_2|>a_1 - a_2$, we have $|D\cap A_2|\leq a_1+1-|D\cap (A_1-A_2)| =
a_1+1-|A_1-A_2|<a_1+1-(a_1-a_2)=a_2+1.$
 Therefore $|D\cap A_2|\leq a_2$, that is, $D\in (A_2,a_2)_\leq$.
\end{proof}

A non-empty family ${\cal I}$ on $E$ is called an {\it independence system}
if $I_1\in {\cal I}$ and $I_2\subseteq I_1$ imply $I_2\in {\cal I}$.
An independence system ${\cal I}$ on $E$ is said to be {\it represented} by a subset ${\cal F}$ of linear inequalities if 
$$I\in {\cal I}\Leftrightarrow I\in (A,a)_\leq \mbox{ for all } (A,a)_\leq\in {\cal F}.$$ 
${\cal F}$ is called a {\it representation} of ${\cal I}$. It is known that every
independence system has such a representation.
Many familiar notions, for example, rank
functions, flats, and bases, in matroid theory are also defined for independence systems.

A {\it circuit} is a minimal dependent set.
For an independence system ${\cal I}$,
 the {\it rank function} $r$ is defined by $r(A)=\max\{|I|\ | I\in {\cal I},I\subseteq A\}=\max\{|I\cap
A|\ |I\in {\cal I}\}$.
 $r(E)$ is called the rank of the independence system ${\cal I}$.
$A\subseteq E$ is called a {\it flat} for an
independence system ${\cal I}$ if $A=B$ holds whenever $A\subseteq B
\subseteq E$ and $r(A)=r(B)$.

Every independence system ${\cal I}$ is expressed as 
$${\cal I}=\{I\subseteq E|\ |A\cap I|\leq
r(A) \mbox{ for all }A \subseteq E\}=\bigcap_{A\subseteq E}(A,r(A))_\leq=\bigcap_{A:\mbox{flat}}(A,r(A))_\leq.$$
 where $r$ is the rank function of ${\cal I}$.


For an independence system ${\cal I}$, we call $(A,a)_\leq $ {\it
  valid} for ${\cal I}$ when ${\cal I}\subseteq (A,a)_\leq$.
 For a representation ${\cal F}$ of ${\cal I}$, $(A,a)_\leq \in {\cal F}$ is valid.
Note that, for an independence system ${\cal I}$, $(A,a)_\leq$ is valid if and only if $r(A)\leq a$. 
Especially, $(A,r(A))_\leq$ is valid.

\subsection{Representations of matroid independence systems}

 When an independence system satisfies the matroid augmentation axiom (see, e.g.  \cite{Oxley92}),
the independence system is called a {\it matroid independence system}.

We consider a condition for an independence system to be a matroid independence system. 
The next theorem is due to Conforti and Laurent \cite{Conforti88}.

A pair of linear inequalities $(A_1,a_1)_\leq$ and $(A_2,a_2)_\leq$ is said to be 
{\it intersecting} if $A_1\cap A_2\neq \emptyset,A_1-A_2\neq \emptyset$ and $A_2-A_1\neq \emptyset$.

\begin{thm}\cite{Conforti88}\label{thm:facetsub}
  Let ${\cal I}$ be an independence system represented by a set ${\cal
    F}$ of linear inequalities.
 ${\cal I}$ is a matroid independence system if and only if,
 for any intersecting inequalities $(A_1,a_1)_\leq\in {\cal F}$
  and $(A_2,a_2)_\leq\in {\cal F}$, $r(A_1)+r(A_2) \geq r(A_1\cap A_2)+r(A_1 \cup A_2)$ holds.
\end{thm}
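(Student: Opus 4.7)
The forward direction is immediate: for any matroid the rank function $r$ is submodular on all of $2^E$, so in particular $r(A_1)+r(A_2)\geq r(A_1\cap A_2)+r(A_1\cup A_2)$ holds for every intersecting pair $(A_1,a_1)_\leq,(A_2,a_2)_\leq\in\mathcal{F}$.

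For the converse, since the matroid axioms are equivalent to $r$ being monotone, subcardinal, and submodular on all of $2^E$, and the first two properties hold automatically for any independence system, the plan is to bootstrap submodularity of $r$ from intersecting pairs in $\mathcal{F}$ to arbitrary pairs of subsets of $E$. The nested case is a trivial equality, and the disjoint case is just subadditivity, which follows at once from $r(A)=\max\{|I\cap A|:I\in\mathcal{I}\}$ since $|I\cap(A_1\cup A_2)|=|I\cap A_1|+|I\cap A_2|$ when $A_1\cap A_2=\emptyset$. The substantive case is that $A_1,A_2$ are intersecting but their canonical inequalities $(A_i,r(A_i))_\leq$ are not assumed to lie in $\mathcal{F}$.

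For the intersecting case I would verify the augmentation axiom directly. Suppose $I,J\in\mathcal{I}$ satisfy $|I|<|J|$ but $I\cup\{e\}\notin\mathcal{I}$ for every $e\in J\setminus I$; the representation property then supplies, for each such $e$, an inequality $(A_e,a_e)_\leq\in\mathcal{F}$ with $e\in A_e$ and $|I\cap A_e|=a_e$, which must equal $r(A_e)$ since $(A_e,a_e)_\leq$ is valid. If a single set $A$ serves as a witness for every $e\in J\setminus I$, then $J\setminus I\subseteq A$ together with $|J\cap A|\leq|I\cap A|$ gives $|J\setminus I|\leq|(I\setminus J)\cap A|\leq|I\setminus J|$, contradicting $|I|<|J|$. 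Otherwise one can find witnesses $A_e\neq A_f$; after further reducing to a pair that is intersecting, the hypothesis applied to $(A_e,a_e)$ and $(A_f,a_f)$ combined with the identity $|I\cap A_e|+|I\cap A_f|=|I\cap(A_e\cap A_f)|+|I\cap(A_e\cup A_f)|$ forces the tightness $|I\cap(A_e\cup A_f)|=r(A_e\cup A_f)$ and $|I\cap(A_e\cap A_f)|=r(A_e\cap A_f)$, and iterating this fusion over all $e\in J\setminus I$ should eventually preclude $|J|>|I|$.

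The hard part will be precisely the reduction of arbitrary witness pairs $A_e,A_f$ to intersecting ones: handling the disjoint and nested configurations, likely through a careful choice of the minimal counterexample $(I,J)$ and of the witness family, and by exploiting Lemma~\ref{lem:facetinc} to replace a nested witness with a tighter one. Once the intersecting regime is reached, the submodular hypothesis on $\mathcal{F}$ cascades into enough tightness to contradict the assumed failure of augmentation.
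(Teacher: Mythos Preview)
The paper does not supply its own proof of this theorem; it is quoted from Conforti and Laurent \cite{Conforti88} and used as a black box, so there is no in-paper argument to compare against.

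On the merits, your proposal for the converse has a genuine obstruction at the iteration step. The fusion you describe applies the hypothesis to an intersecting pair $(A_e,a_e)_\leq,(A_f,a_f)_\leq\in\mathcal{F}$ and concludes that $A_e\cup A_f$ is again tight for $I$, i.e.\ $|I\cap(A_e\cup A_f)|=r(A_e\cup A_f)$. To iterate and fuse $A_e\cup A_f$ with a third witness $A_g$ you would need
\[
r(A_e\cup A_f)+r(A_g)\ \geq\ r\bigl((A_e\cup A_f)\cap A_g\bigr)+r\bigl(A_e\cup A_f\cup A_g\bigr),
\]
but the hypothesis grants submodularity only for pairs \emph{both} drawn from $\mathcal{F}$, and $A_e\cup A_f$ is in general not a member of $\mathcal{F}$. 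After a single fusion you have left the domain on which the hypothesis is assumed, so ``iterating this fusion over all $e\in J\setminus I$'' cannot proceed as written. The disjoint and nested reductions you flag as ``the hard part'' are a separate difficulty; even in the favourable situation where every pair of witnesses happens to be intersecting, the iteration already stalls at the second step.

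To close the gap one typically first extends the submodular inequality from intersecting pairs in $\mathcal{F}$ to all intersecting pairs in the sublattice of $2^E$ generated by $\{A:(A,a)_\leq\in\mathcal{F}\}$ (an uncrossing induction on a suitable complexity measure), and only then runs the tight-set argument. Alternatively one can bypass the augmentation axiom altogether and prove submodularity of $r$ on all of $2^E$ directly by a similar uncrossing. Either route is substantive work that your outline does not yet contain.
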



It is known that, for any matroid, the intersection of any two flats is a flat again. 
Therefore the set of flats of a matroid is a closure system.
The closure $\mbox{cl}(A)$ of $A\subseteq E$ is the minimum flat including $A$.

The  minimum representation is a representation which has no redundant inequalities. 
Greene \cite{Greene91} showed the following theorem. 

\begin{thm}(Greene \cite{Greene91})\label{thm:greene}
Every matroid independence system has
the unique minimum representation defined by flats with respect to set
inclusion among all the representations defined by flats.
Moreover, the minimum representation is $\{(A,r(A))_\leq|A \mbox { is the closure of a circuit.}\}$. 
\end{thm}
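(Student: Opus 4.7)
The plan is to prove two facts, which together yield the theorem: (i) the family ${\cal F}^* := \{(A, r(A))_\leq : A \text{ is the closure of some circuit}\}$ is a representation of ${\cal I}$, and (ii) every flat-representation of ${\cal I}$ contains ${\cal F}^*$. Since closures of circuits are themselves flats, ${\cal F}^*$ is a flat-representation; combining (i) and (ii), it is the unique minimum among all flat-representations with respect to set inclusion, and in particular it is a representation in which no inequality is redundant.

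For (i), every $I \in {\cal I}$ automatically satisfies every valid inequality $(A, r(A))_\leq$ and hence those in ${\cal F}^*$. Conversely, if $D \notin {\cal I}$, then $D$ contains some circuit $C$; setting $A := \mbox{cl}(C)$, one has $r(A) = r(C) = |C| - 1$, so $|D \cap A| \geq |C| = r(A) + 1$, witnessing that $D$ violates the inequality $(A, r(A))_\leq \in {\cal F}^*$.

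The heart of the argument is (ii), for which the key lemma to establish is: for any circuit $C$ with closure $A = \mbox{cl}(C)$, the inequality $(A, r(A))_\leq$ is the \emph{unique} flat-inequality of the matroid that $C$ itself violates. To prove this, suppose $F$ is a flat with $|C \cap F| > r(F)$. If $C \not\subseteq F$, then $C \cap F$ is a proper subset of the circuit $C$, hence independent, giving $|C \cap F| = r(C \cap F) \leq r(F)$, a contradiction. So $C \subseteq F$, and since $F$ is a flat, $A = \mbox{cl}(C) \subseteq F$. If $F \supsetneq A$, then $r(F) \geq r(A) + 1 = |C| = |C \cap F|$, again contradicting $|C \cap F| > r(F)$. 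Hence $F = A$. Now, given any flat-representation ${\cal F}$ of ${\cal I}$, the dependent set $C$ must violate some inequality in ${\cal F}$, and by the lemma the only candidate is $(A, r(A))_\leq$; therefore $(A, r(A))_\leq \in {\cal F}$, establishing (ii).

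The step that I expect to require the most care is the uniqueness lemma in (ii)—in particular, the idea of testing the circuit $C$ itself against every other flat and invoking the defining property of a circuit (all proper subsets independent). Once that observation is in hand, neither Lemma \ref{lem:facetinc} nor Theorem \ref{thm:facetsub} is needed in the streamlined argument, though the former could alternatively be used to recast (ii) as a direct redundancy statement comparing $(F, r(F))_\leq$ with $(A, r(A))_\leq$.
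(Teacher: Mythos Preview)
The paper does not supply its own proof of this theorem: it is quoted as a result of Greene \cite{Greene91} and left unproved. So there is no in-paper argument to compare against; your task is simply to give a correct self-contained proof, and you have done so.

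Your two-step plan is sound and the execution is clean. Part (i) is routine. For part (ii), the decisive idea---testing the circuit $C$ itself against an arbitrary flat-inequality $(F,r(F))_\leq$ and using that every proper subset of $C$ is independent---is exactly right, and the case split ($C\not\subseteq F$; $C\subseteq F$ but $F\supsetneq\mbox{cl}(C)$) is handled correctly. One small point worth making explicit: the phrase ``representation defined by flats'' in the paper is meant in the sense of the displayed identity just before the theorem, i.e.\ subfamilies of $\{(F,r(F))_\leq : F\text{ a flat}\}$. Your lemma is stated for inequalities $(F,r(F))_\leq$, which matches this reading. If one allowed $(F,a)_\leq$ with $F$ a flat and arbitrary $a$, validity forces $a\ge r(F)$, and your violation inequality $|C\cap F|>a$ together with $F=\mbox{cl}(C)$ then gives $a<|C|=r(F)+1$, so $a=r(F)$ anyway; you might add this one-line remark for completeness, but it does not affect the correctness of the argument.
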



\vspace{5mm}

We consider the independence polyhedron
$P(M)=\{p\in {\bf R}^E|\langle \chi_A , p\rangle \leq r(A) \mbox{ for all
  }A\subseteq E\}$ of a matroid $M$.
 We can also represent a matroid as a family of sets.
By identifying a set $A\subseteq E$ with its incidence vector
$\chi_A$, the family ${\cal I}(M)$ is identified with $\{p\in
\{0,1\}^E|\langle \chi_A, p\rangle \leq r(A) \mbox { for all }A \subseteq E\}$.
Therefore the independent sets of a matroid can be considered
as the 01-points in the polyhedron expressed by linear inequalities
which have 01-normal vectors and satisfy submodularity. 

For valid inequalities $\{(A_i,r(A_i))_\leq\}_i$ to a matroid
independence system ${\cal I}(M)$, ${\cal I}(M)\cap (\bigcap_i
(A_i,r(A_i))_=)$ is called a {\it face} of the independence system of the matroid $M$. 
A face is {\it proper} if it is not ${\cal I}(M)$.
Any face of matroid independence system ${\cal I}(M)$
corresponds to some face of the matroid independence polyhedron $P(M)$.
A face of a matroid independence system is called a {\it facet} if it
is maximal in all the proper faces with respect to inclusion.
When ${\cal I}(M) \cap (A,r(A))_=$ is a facet, $(A,r(A))_\leq$ is called {\it a facet-defining inequality} of ${\cal I}(M)$.

\begin{lem}\label{lem:indflat}
For a loopless matroid $M$, if $(A,r(A))_\leq$ is a facet-defining inequality, $A$ is a flat.
\end{lem}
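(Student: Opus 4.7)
The plan is to argue by contradiction: assume $A$ is not a flat, put $\bar A := \mbox{cl}(A)$ so that $A \subsetneq \bar A$ and $r(\bar A) = r(A)$, and then exhibit a proper face of ${\cal I}(M)$ that strictly contains ${\cal I}(M) \cap (A, r(A))_=$. This would directly contradict the assumption that the latter is maximal among proper faces.

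The candidate enlargement is the face $F := {\cal I}(M) \cap (\bar A, r(A))_=$, which is a face since $(\bar A, r(\bar A))_\leq = (\bar A, r(A))_\leq$ is valid for ${\cal I}(M)$. For containment, I would observe that for any independent $I$ with $|I \cap A| = r(A)$, the sandwich $r(A) = |I \cap A| \leq |I \cap \bar A| \leq r(\bar A) = r(A)$ forces $|I \cap \bar A| = r(A)$, so $I \in F$. For strictness, I would pick $e \in \bar A \setminus A$ and extend $\{e\}$ to a basis $I'$ of the restriction $M|\bar A$; then $|I'| = r(\bar A) = r(A)$ and $I' \subseteq \bar A$ give $I' \in F$, while $e \in I' \setminus A$ forces $|I' \cap A| \leq r(A) - 1$, so $I' \notin (A, r(A))_=$. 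That $F$ is still a proper face is cheap: any facet-defining $(A, r(A))_\leq$ must satisfy $r(A) \geq 1$ (otherwise $(A, r(A))_=$ contains all of ${\cal I}(M)$ and the face is not proper), and then $\emptyset \in {\cal I}(M)$ satisfies $|\emptyset \cap \bar A| = 0 < r(A)$, witnessing $\emptyset \notin F$.

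The main obstacle I anticipate is not the logic but locating where looplessness actually does work. In a matroid with loops one can have $A$ not a flat solely because $\bar A \setminus A$ consists of loops; then no independent set ever meets $\bar A \setminus A$, so ${\cal I}(M) \cap (A, r(A))_=$ and ${\cal I}(M) \cap (\bar A, r(A))_=$ coincide and the separation argument collapses. The looplessness hypothesis is exactly what guarantees that the chosen $e \in \bar A \setminus A$ is itself independent and can therefore be completed to a basis of $M|\bar A$, which is the ingredient needed to produce the separating witness $I'$.
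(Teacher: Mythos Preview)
Your proposal is correct and follows essentially the same route as the paper: both argue by contradiction, pass to $\bar A = \mbox{cl}(A)$, show ${\cal I}(M)\cap(A,r(A))_= \subseteq {\cal I}(M)\cap(\bar A,r(A))_=$ via the sandwich inequality, and produce a separating witness by extending a non-loop $e\in\bar A\setminus A$ to a basis of $M|\bar A$. Your treatment is in fact slightly more careful than the paper's, since you explicitly verify that the enlarged face is still proper (via $r(A)\geq 1$ and $\emptyset$), a point the paper leaves implicit.
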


\begin{proof}
Suppose that $A$ is not a flat. $\mbox{cl}(A)$ denotes the closure of $A$. Then $r(\mbox{cl}(A))=r(A)$. 
We can take $x\in \mbox{cl}(A)-A$ since $A$ is not a flat. 
Since the restriction of loopless matroid $M$ to $\mbox{cl}(A)$ is also a loopless matroid, there exists $B\in {\cal I}(M)\cap (\mbox{cl}(A),r(A))_=$ with $x\in B\subseteq \mbox{cl}(A)$. 
Then $B\notin {\cal I}(M) \cap (A,r(A))_=$ since $|B\cap A|<|B\cap (A\cup x)|\leq |B\cap \mbox{cl}(A)| =|B|=r(A)$. 
Since, for any $X\in {\cal I}(M)$, $|X\cap A|= r(A)$ implies $|X\cap \mbox{cl}(A)|=r(A)$, we have ${\cal I}(M) \cap (A,r(A))_= \subseteq {\cal I}(M) \cap (\mbox{cl}(A),r(\mbox{cl}(A)))_=$. Hence we have ${\cal I}(M) \cap (A,r(A))_= \subsetneq {\cal I}(M) \cap (\mbox{cl}(A),r(\mbox{cl}(A)))_=$. Therefore $(A,r(A))_\leq$ is not a facet-defining inequality.
\end{proof}

For a loopless matroid, we call a flat $A$ a {\it facet-defining flat} of ${\cal I}(M)$ if $(A,r(A))_\leq$ is a facet-defining inequality of ${\cal I}(M)$.

The next theorem is due to Edmonds.

\begin{thm}(Edmonds \label{thm:bifacet}\cite{Edmonds71})
  For a loopless matroid independence system ${\cal I}(M)$, ${\cal
    I}(M)\cap (A,r(A))_=$ is a facet of ${\cal I}(M)$ if and only if
  $A$ is a flat and the restriction $M|A$ of $M$ to $A$ is connected.
\end{thm}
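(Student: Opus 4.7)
My plan is to prove both implications separately. The forward direction follows almost immediately from Lemma \ref{lem:indflat} combined with a short argument about disconnected restrictions producing extra hyperplane constraints; the reverse direction is the substantive part, where I will show directly that if $A$ is a flat with $M|A$ connected, then the only hyperplane whose associated face contains $F:={\cal I}(M)\cap(A,r(A))_=$ is $(A,r(A))_=$ itself.

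For the forward direction, I would take $F$ to be a facet and invoke Lemma \ref{lem:indflat} to conclude that $A$ is a flat. For the connectedness of $M|A$, I would argue by contradiction: if $M|A$ decomposes as $A_1\sqcup A_2$ with $r(A)=r(A_1)+r(A_2)$, then $|X\cap A|=|X\cap A_1|+|X\cap A_2|$ combined with the bounds $|X\cap A_i|\leq r(A_i)$ forces $F={\cal I}(M)\cap(A_1,r(A_1))_=\cap(A_2,r(A_2))_=$. Hence $F$ is contained in the strictly larger proper face ${\cal I}(M)\cap(A_1,r(A_1))_=$, where strictness is witnessed by a basis $B_1$ of $M|A_1$ (which lies in the larger face but not in $F$, since $|B_1\cap A_2|=0<r(A_2)$ by looplessness), contradicting maximality.

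For the reverse direction, I would let $A$ be a flat with $M|A$ connected and show that any face $G$ with $F\subseteq G\subsetneq{\cal I}(M)$ equals $F$. It suffices to rule out the single-hyperplane case $G={\cal I}(M)\cap(B,r(B))_=$ (the multi-hyperplane case follows by intersection) by proving $B=A$. First, for any $z\in E\setminus A$, flatness gives $z\notin\mbox{cl}(A)$, so $I\cup\{z\}$ is independent for any basis $I$ of $M|A$; since both $I$ and $I\cup\{z\}$ lie in $F$, the equality $|I\cap B|=r(B)=|(I\cup\{z\})\cap B|$ forces $z\notin B$, giving $B\subseteq A$. Second, for distinct $x,y\in A$, connectedness of $M|A$ provides a circuit $C\subseteq A$ through $x,y$; extending $C\setminus\{y\}$ to a basis $I$ of $M|A$ yields $x\in I$, $y\notin I$ (else $I\supseteq C$ would be dependent) and $C$ as the fundamental circuit of $y$ with respect to $I$, so that $I':=(I\setminus\{x\})\cup\{y\}$ is again a basis of $M|A$; then $I,I'\in F$ and $|I\cap B|=|I'\cap B|=r(B)$ forces $\chi_B(x)=\chi_B(y)$. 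Hence $\chi_B$ is constant on $A$, and combined with $B\subseteq A$ this forces $B=A$ (since $B=\emptyset$ is excluded by convention), whence $G=F$.

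I expect the main difficulty to lie in the basis-pair construction in the reverse direction: producing two bases of $M|A$ differing by the single swap $x\leftrightarrow y$ for arbitrary prescribed $x,y\in A$. This relies on the standard matroid fact that a connected matroid has any two of its elements on a common circuit, together with a fundamental-circuit argument to verify that the exchanged set remains a basis. By contrast, the flatness step is quite direct, and the forward direction is essentially bookkeeping once Lemma \ref{lem:indflat} is invoked.
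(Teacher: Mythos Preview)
The paper does not give its own proof of Theorem~\ref{thm:bifacet}; it is stated as a known result attributed to Edmonds~\cite{Edmonds71} and used as a black box thereafter. There is therefore no paper-proof to compare your proposal against.

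Your argument is correct and self-contained. Two small points you should make explicit. First, check that $F$ is a nonempty proper face before running the maximality argument: it is nonempty because any basis of $M|A$ lies in $F$, and it is proper because $A\neq\emptyset$ together with looplessness gives $r(A)\geq 1$, so $\emptyset\notin F$. Second, the reduction to a single hyperplane in the reverse direction is indeed valid, but it deserves one line of justification: a proper face $G\supseteq F$ is an intersection $\bigcap_i G_i$ with $G_i={\cal I}(M)\cap(B_i,r(B_i))_=$, each $G_i$ contains $F$ and is itself proper (again because $r(B_i)\geq 1$), so your single-hyperplane argument forces every $B_i=A$, whence $G=\bigcap_i G_i=F$. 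The basis-swap construction via a common circuit through $x,y$ and the associated fundamental-circuit reasoning are exactly the right tools, and the case $|A|=1$ is handled automatically since then $B\subseteq A$ and $B\neq\emptyset$ already force $B=A$.
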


We discuss the relation between Theorem \ref{thm:greene} and Theorem \ref{thm:bifacet}.
When $A$ is the closure of some circuit, $A$ is a flat and $M|A$ is connected since any circuit is included in some connected component.
Therefore we have ${\cal I}(M)=\bigcap_{\{\mbox{$A$ is a facet-defining flat}\}}(A,r(A))_\leq.$
However, it may not be the minimum representation by flats.

\begin{exa}
Consider the matroid $M$ on $E=\{a,b,c,d,e,f\}$ defined by 
$${\cal I}(M)=(E,4)_\leq\cap (\{a,b,c,d\},3)_\leq \cap (\{c,d,e,f\},3)_\leq \cap (\{e,f,a,b\},3)_\leq.$$
$E$ is the closure of no circuit.
However, $E$ is a flat and $M|E$ is a connected matroid.
Therefore $(E,4)_\leq$ is a facet-defining inequality of ${\cal I}(M)$ but does not belong to the minimum representation by flats.
In fact, ${\cal I}(M)=(\{a,b,c,d\},3)_\leq \cap (\{c,d,e,f\},3)_\leq \cap (\{e,f,a,b\},3)_\leq.$
\end{exa}

\subsection{Representations of matroid base systems}\label{subsec:base}

For a matroid, a {\it base} is a maximal independent set with respect to inclusion. 
For a matroid $M$, the family of its bases is denoted by ${\cal B}(M)$ and called the {\it matroid base system} of $M$. We call a family of sets a {\it matroid base system} when it is the matroid base system of some matroid. Since any base of a matroid has the same cardinality $r(E)$, we have ${\cal B}(M)={\cal I}(M)\cap (E,r(E))_=$.

We can identify the base system ${\cal B}(M)$ of a matroid $M$ with the base polytope 
$$B(M)=\{p\in {\bf R}^E|\langle p,\chi_A \rangle \leq r(A) \mbox{ for
  all } A\subseteq E, \langle p,\chi_E\rangle = r(E)\}$$
since the bases of a matroid correspond to the extreme points of the
base polytope so that $B(M)=\conv\{\chi_B|B\in {\cal B}(M)\}$.

We try to describe the combinatorial structures of a matroid base system in terms of a family of sets. 
For a set of valid inequalities $\{(A_i,r(A_i))_\leq\}_i$ to a matroid $M$, ${\cal B}(M)\cap
(\bigcap_i (A_i,a_i)_=)$ is called a {\it face} of the matroid base system ${\cal B}(M)$.
A face is said to be {\it proper} when it is not equal to ${\cal B}(M)$.

\begin{prop}
The faces of the form ${\cal B}(M)\cap (\bigcap_i (A_i,r(A_i))_=)$ correspond to the faces
of matroid base polytope $B(M)$ bijectively.
\end{prop}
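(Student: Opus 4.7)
The plan is to exhibit the natural correspondence that sends a set-system face $F_{\cal B} = {\cal B}(M) \cap \bigcap_i (A_i, r(A_i))_=$ to the polytope face $F_P = B(M) \cap \bigcap_i \{p \in {\bf R}^E : \langle p, \chi_{A_i}\rangle = r(A_i)\}$ and, in the other direction, sends a face $F_P$ of $B(M)$ to the set of bases $B$ with $\chi_B \in F_P$. Both maps are well-defined: since each $(A_i, r(A_i))_\leq$ is valid for ${\cal B}(M)$, the hyperplane $\langle p, \chi_{A_i}\rangle = r(A_i)$ is a supporting hyperplane of $B(M)$, so $F_P$ really is a face of the polytope.

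To verify that the two maps are mutually inverse, I would use the classical fact that the vertices of $B(M)$ are exactly the incidence vectors $\chi_B$ for $B \in {\cal B}(M)$, so that $B(M) = \conv\{\chi_B : B\in {\cal B}(M)\}$. Consequently any face $F_P$ of $B(M)$ is the convex hull of its own vertices, which are precisely the $\chi_B$ lying on every cutting hyperplane---that is, the $B$ satisfying $|B \cap A_i| = r(A_i)$ for all $i$. Thus $F_P$ and $F_{\cal B}$ determine each other as subsets under the identification $B \leftrightarrow \chi_B$.

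For surjectivity of the correspondence, I would invoke the standard polyhedral fact that every face of a polytope given by a finite inequality description arises by promoting some subset of the defining inequalities to equalities. Applied to the description of $B(M)$ via $\langle p,\chi_A\rangle \leq r(A)$ for $A\subseteq E$ together with $\langle p, \chi_E\rangle = r(E)$, this yields that every face of $B(M)$ has the form $B(M)\cap \bigcap_i \{p : \langle p,\chi_{A_i}\rangle = r(A_i)\}$ for a suitable collection $\{A_i\}$, which then corresponds under the map above to the set-system face ${\cal B}(M)\cap \bigcap_i (A_i,r(A_i))_=$.

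The main technical point is this last surjectivity step: an arbitrary face of $B(M)$ is a priori cut out by an arbitrary supporting hyperplane, and one must argue that it can be recut using only the particular hyperplanes of the form $\langle p,\chi_A\rangle = r(A)$. This is where the specific inequality description of $B(M)$ is essential, and it reduces to the standard fact---provable by LP duality---that the face lattice of a polytope is determined by the tight-constraint partial order of any of its inequality descriptions.
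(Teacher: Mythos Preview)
Your proposal is correct and follows essentially the same route as the paper: both directions use the identification $B\leftrightarrow\chi_B$ between bases and vertices of $B(M)$, and the key surjectivity step---that every face of $B(M)$ is cut out by hyperplanes of the form $\langle p,\chi_A\rangle=r(A)$---is exactly what the paper invokes when it says ``every facet of $B(M)$ has a normal vector $\chi_A$ with 01-coefficients'' and ``every face of $B(M)$ can be written as the intersection of some facets.'' Your write-up is in fact more explicit than the paper's about why the two maps are mutually inverse; the paper simply asserts the convex-hull direction without checking that distinct set-system faces yield distinct polytope faces.
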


\begin{proof}
Assume that a face of $B(M)$ is given.
Since every facet of $B(M)$ has a normal vector $\chi_A$ with 01-coefficients, it can be written as ${\cal B}(M)\cap (A,r(A))_=$. Since every face of $B(M)$
can be written as the intersection of some facets of $B(M)$, the face corresponds to some ${\cal B}(M)\cap (\bigcap_i (A_i,r(A_i))_=)$.

Conversely, assume that a face of the  form ${\cal B}(M)\cap (\bigcap_i (A_i,r(A_i))_=)$ is given. By considering the convex hull of the face, we have the corresponding face of $B(M)$.
\end{proof}

It is known that every face of a matroid base system is also a matroid base system.

A face of a matroid base system is called a {\it facet} if it is
maximal in all the proper faces with respect to set inclusion.
A facet is expressed as ${\cal B}(M)\cap (A,r(A))_=$. 
We call an inequality $(A,r(A))_\leq$ a {\it facet-defining inequality} when ${\cal B}(M)\cap (A,r(A))_=$ is a facet.
For a matroid, $A$ is a flat when $(A,r(A))_\leq$ is a facet-defining inequality by Lemma \ref{lem:indflat} and Theorem \ref{thm:facetbase}.
The facets of a matroid base system are in one-to-one correspondence with the facets of the base polytope $B(M)$.

A matroid is not {\it connected} if it is expressed as ${\cal B}(M)={\cal B}(M_1)\oplus {\cal B}(M_2)=\{B_1\cup B_2|B_1\in {\cal B}(M_1), B_2\in {\cal B}(M_2)\}$ such that the non-empty underlying sets of $M_1$ and $M_2$ are disjoint.
The number of the connected components of $M$ is closely related to the 
dimension of the base polytope(e.g. \cite{Bixby85}):
$$\mbox{(the dimension of the base polytope)} = |E| - \mbox{(the number of its connected components).}$$

A face of a base polytope is a facet when the dimension of the face is less than that of the base polytope by 1.
Therefore we know that a face of a matroid base polytope is a facet if and only if
the number of the connected components of the matroid corresponding to the
face is more than that of connected components of the whole matroid by 1.
 Especially, the number of its connected components of
the matroid corresponding to a facet of the base polytope of a connected matroid is $2$.
We define the dimension of a base system as the dimension of the corresponding base polytope, that is equal to ($|E| -$ the number of connected components). Note that the dimension of a base system is irrelevant to the rank of the matroid.









For a connected matroid $M$, a facet-defining inequality $(A,r(A))_\leq$ of ${\cal B}(M)$ is {\it
  non-trivial} if $(E,r(E))_=\cap (A,r(A))_>$ and $(E,r(E))_=\cap (A,r(A))_<$ are not empty.
Otherwise a facet-defining inequality is said to be {\it trivial}.
The next lemma follows from the definition.

\begin{lem}\label{lem:note}
$(E,r(E))_=\cap (A,r(A))_\leq=(E,r(E))_=\cap(A^c,r(E)-r(A))_\geq$.
\end{lem}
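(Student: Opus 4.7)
The plan is essentially a one-line set-theoretic computation, so the ``proof proposal'' is really just pointing to where the equality comes from. I would take an arbitrary $B$ in the common ambient set $(E,r(E))_=$, that is, a subset of $E$ with $|B|=r(E)$, and then decompose it along the partition $E = A \sqcup A^c$ to get the identity $|B\cap A|+|B\cap A^c|=|B|=r(E)$.

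From this identity the equivalence is immediate: $|B\cap A|\le r(A)$ is the same as $r(E)-|B\cap A^c|\le r(A)$, which rearranges to $|B\cap A^c|\ge r(E)-r(A)$. So $B\in (A,r(A))_\le$ iff $B\in (A^c,r(E)-r(A))_\ge$, as long as $B$ already lies in $(E,r(E))_=$. Intersecting both inclusion statements with $(E,r(E))_=$ gives the claimed equality of set families.

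There is no real obstacle: the lemma is a bookkeeping statement recording that on the hyperplane $\langle \chi_E,p\rangle=r(E)$ the half-space condition on $A$ is equivalent to the complementary half-space condition on $A^c$. The only thing worth being careful about is the direction flip from $\le$ to $\ge$ when moving the term $|B\cap A^c|$ to the other side, which accounts for the switch from $(A,r(A))_\le$ on the left to $(A^c,r(E)-r(A))_\ge$ on the right. I would present this as two short displayed lines and conclude.
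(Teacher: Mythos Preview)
Your argument is correct and is exactly the computation the paper has in mind: the paper gives no explicit proof, simply stating that the lemma ``follows from the definition,'' and your decomposition $|B\cap A|+|B\cap A^c|=|B|=r(E)$ for $B\in (E,r(E))_=$ is precisely that definition-chasing.
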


\vspace{5mm}

\begin{exa}
  Let $E=\{a,b,c,d,e\}$.
 Consider the independence system ${\cal I}(M)=(E,3)_\leq\cap (\{a,b,c\},2)_\leq\cap (\{c,d,e\},2)_\leq$,
  illustrated in the left of Figure \ref{fig:m2}. A flat $(A,r(A))_\leq$ in the minimal representation of the independence system is represented by a closed curve that encircles the elements of $A$ in the figure. By Theorem \ref{thm:facetsub}, it is a matroid independence system.
We want to specify all the facets of matroid base system 
$${\cal B}(M)={\cal I}(M)\cap (E,3)_==(E,3)_=\cap (\{a,b,c\},2)_\leq\cap (\{c,d,e\},2)_\leq.$$ 


This matroid has two non-trivial facets, ${\cal B}(M)\cap
(\{a,b,c\},2)_=$ and ${\cal B}(M)\cap(\{c,d,e\},2)_=$.
  Let $M_1$ be the matroid which corresponds to a facet ${\cal B}(M)\cap
(\{a,b,c\},2)_=={\cal B}(M_1)$ of ${\cal B}(M).$
 Since $(E,3)_=\cap (\{a,b,c\},2)_\geq=(E,3)_=\cap (\{d,e\},1)_\leq$ by Lemma \ref{lem:note}, 
${\cal B}(M_1)=(E,3)_=\cap (\{a,b,c\},2)_\leq \cap
(\{d,e\},1)_\leq=(\{a,b,c\},2)_= \cap (\{d,e\},1)_=$,
 illustrated in the center of Figure \ref{fig:m2}. 
This is a non-connected matroid, which has two connected components $\{a,b,c\}$ and $\{d,e\}$.
 Therefore ${\cal B}(M_1)$ is really a facet of ${\cal B}(M)$.

  \begin{figure}[ht]
    \begin{center}
      \leavevmode
\includegraphics[width=35mm]{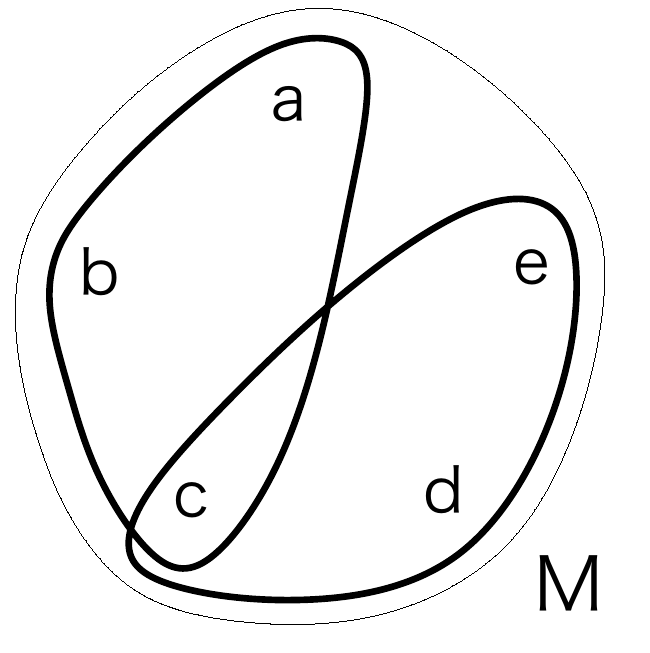}\quad
\includegraphics[width=35mm]{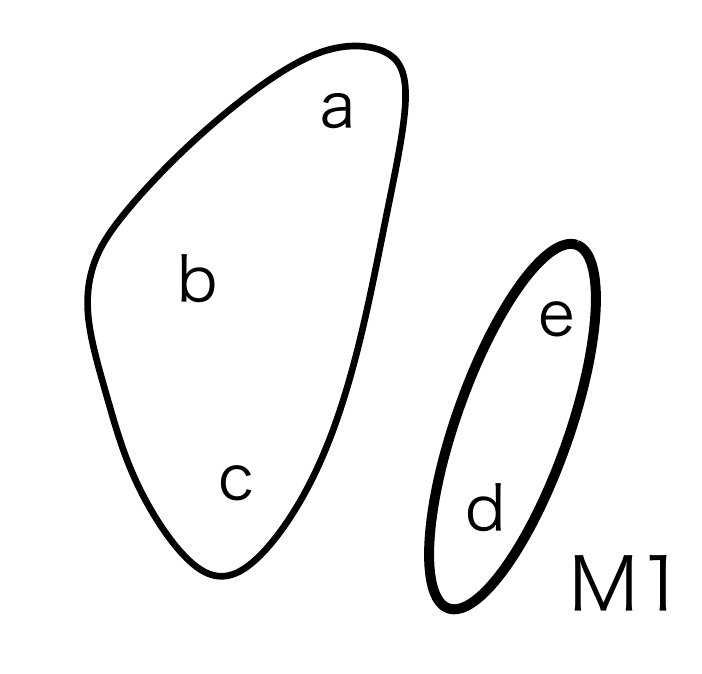}\quad\includegraphics[width=33mm]{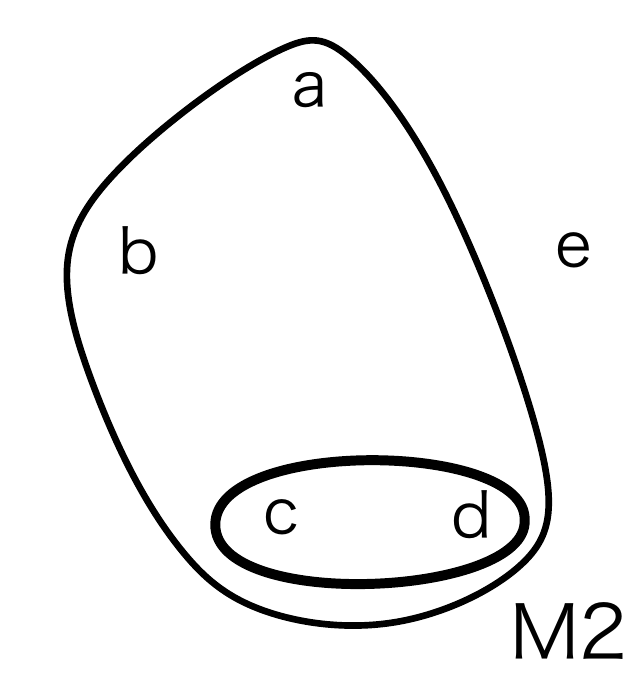}      
    \end{center}
    \caption{${\cal B}(M_1)$ and ${\cal B}(M_2)$ are facets of a matroid base system ${\cal B}(M)$.}\label{fig:m2}
  \end{figure}

We consider the face ${\cal B}(M_2)$ defined by ${\cal B}(M)\cap (\{e\},1)_=$.
Since $(\{c,d\},1)_\leq\subseteq (\{c,d,e\},2)_\leq$,
  \begin{eqnarray*}
{\cal B}(M_2)&=&{\cal B}(M)\cap (\{e\},1)_==(E,3)_=\cap (\{a,b,c\},2)_\leq\cap (\{c,d,e\},2)_\leq\cap (\{e\},1)_=\\
&=& (E,3)_=\cap (\{a,b,c,d\},2)_=\cap (\{c,d,e\},2)_\leq\cap (\{e\},1)_=\\
&=&(E,3)_=\cap (\{a,b,c,d\},2)_=\cap(\{c,d,e\},2)_\leq\cap (\{c,d\},1)_\leq\cap (\{e\},1)_=\\
&=&(\{a,b,c,d\},2)_= \cap (\{c,d\},1)_\leq \cap (\{e\},1)_=.
 \end{eqnarray*}
 Since ${\cal B}(M_2)$ has two connected components $\{a,b,c,d\}$ and $\{e\}$,
 ${\cal B}(M_2)=(\{e\},1)_=\cap {\cal B}(M)$ is a trivial facet of ${\cal B}(M)$.

However, $(\{c\},1)_=\cap {\cal B}(M_1)=(\{a,b\},1)_=\cap (\{c\},1)_=\cap
(\{d,e\},1)_=$ is not a facet of ${\cal B}(M)$ since it has three
connected components, $\{a,b\},\{c\}$ and $\{d,e\}$ as a matroid.

  $(\{a,b,d,e\},3)_=\cap {\cal B}(M)=(E,3)_=\cap
  (\{a,b,d,e\},3)_\leq \cap(\{c\},0)_\leq=(\{a,b,d,e\},3)_=\cap
  (\{c\},0)_=$. Therefore $(\{a,b,d,e\},3)_\leq$ is a trivial facet-defining
  inequality of ${\cal B}(M)$.

In summary, the base polytope $B(M)$ of $M$ has the following facets.
\begin{eqnarray*}
B(M)=\left\{p\in {\bf R}^E| \begin{array}[h]{l}
 p(\{a,b,d,e\}) \leq 3\\
 p(\{a\})\leq 1,p(\{b\}) \leq 1,p(\{d\})\leq 1,p(\{e\})\leq 1\\
 p(\{a,b,c\}) \leq 2, p(\{c,d,e\}) \leq 2,p(E)=3
  \end{array}\right\}
\end{eqnarray*}
where $p(A)=\langle p, \chi_A \rangle.$

\end{exa}

\begin{prop}
For a matroid $M$, ${\cal I}(M)\subseteq (A,a)_\leq$ if and only if ${\cal B}(M)\subseteq (A,a)_\leq$.
\end{prop}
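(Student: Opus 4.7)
The plan is to prove the two implications separately, noting that one direction is essentially immediate from the inclusion ${\cal B}(M) \subseteq {\cal I}(M)$, while the other requires the augmentation axiom of a matroid.

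For the forward direction, I would simply observe that every base of $M$ is independent, so ${\cal B}(M) \subseteq {\cal I}(M)$. Hence if ${\cal I}(M) \subseteq (A,a)_\leq$ then a fortiori ${\cal B}(M) \subseteq (A,a)_\leq$.

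For the reverse direction, suppose ${\cal B}(M) \subseteq (A,a)_\leq$ and take any $I \in {\cal I}(M)$. By the matroid augmentation property (applied to $I$ and any fixed base, or equivalently by the definition of a base as a maximal independent set combined with the exchange axiom), there is a base $B \in {\cal B}(M)$ with $I \subseteq B$. Then $|I \cap A| \leq |B \cap A| \leq a$, where the last inequality uses the hypothesis $B \in {\cal B}(M) \subseteq (A,a)_\leq$. Therefore $I \in (A,a)_\leq$, and since $I$ was arbitrary we conclude ${\cal I}(M) \subseteq (A,a)_\leq$.

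There is no serious obstacle here; the only subtlety is remembering that the nontrivial direction relies specifically on the matroid property that every independent set extends to a base, which would fail for a general independence system. The proof is essentially a one-line application of this extension fact together with monotonicity of $|{\cdot} \cap A|$ under subset inclusion.
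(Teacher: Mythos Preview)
Your proof is correct and takes essentially the same approach as the paper: the forward direction is immediate from ${\cal B}(M)\subseteq {\cal I}(M)$, and the reverse direction uses the fact that every independent set extends to a base together with monotonicity of $|\,\cdot\cap A|$. The only cosmetic difference is that the paper phrases the reverse direction as a contrapositive.
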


\begin{proof}
Only-If-part follows from ${\cal B}(M)\subseteq {\cal I}(M)$. 

We show If-part. Assume that ${\cal I}(M)\subseteq (A,a)_\leq$ does not hold. Then $|I\cap A|>a$ for some $I\in {\cal I}(M)$. 
Then there exists $B\in {\cal B}(M)$ such that $I\subseteq B$.
We have $|B\cap A|>a$. Therefore $B\in {\cal B}(M)$ and $B\notin (A,a)_\leq$.
\end{proof}

Next, we consider the relation between base systems and minor operations on matroids. 

Consider contraction $M/A$ and deletion $M\backslash A$ by $A\subseteq E$ for a matroid $M$.
$M|A$ denotes the restriction of $M$ by $A$, which is equal to $M\backslash A^c$.
Define ${\cal B}(M)\backslash A:={\cal B}(M\backslash A)$, and so on.

The next lemma follows from the definition of contraction.

\begin{lem}\label{lem:contract}
For a matroid $M$ and $A\subseteq E$,
$${\cal B}(M)\cap (A,r(A))_= = {\cal B}(M|A)\oplus {\cal B}(M/A).$$
\end{lem}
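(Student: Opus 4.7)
The plan is to verify the set equality by separately checking containment in each direction, relying on the standard definitions of restriction and contraction.

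For the forward inclusion, I would start with $B \in {\cal B}(M) \cap (A, r(A))_=$ and split it as the disjoint union $B = (B \cap A) \cup (B - A)$. Since $B \cap A$ is a subset of $A$ that is independent in $M$ and has size $r(A)$, it is a base of $M|A$. For the other piece, note that $M/A$ has ground set $A^c$ and rank $r(E) - r(A)$, so the cardinality $|B - A| = r(E) - r(A)$ is correct. To verify that $B - A$ is independent in $M/A$, I would invoke the defining property of contraction: $I \subseteq A^c$ lies in ${\cal I}(M/A)$ iff $I \cup B_A \in {\cal I}(M)$ for some (equivalently, any) base $B_A$ of $M|A$. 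Taking $B_A = B \cap A$ gives $(B - A) \cup (B \cap A) = B \in {\cal B}(M) \subseteq {\cal I}(M)$, so the independence condition is satisfied, and the right cardinality makes $B - A$ a base of $M/A$. Therefore $B \in {\cal B}(M|A) \oplus {\cal B}(M/A)$.

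For the reverse inclusion, I would take $B_1 \in {\cal B}(M|A)$ and $B_2 \in {\cal B}(M/A)$ and show that $B_1 \cup B_2 \in {\cal B}(M) \cap (A, r(A))_=$. The defining property of contraction again gives $B_1 \cup B_2 \in {\cal I}(M)$, and $|B_1 \cup B_2| = r(A) + (r(E) - r(A)) = r(E)$, so it is a base of $M$. Since $B_1 \subseteq A$ and $B_2 \subseteq A^c$ are disjoint, $(B_1 \cup B_2) \cap A = B_1$ has size exactly $r(A)$, placing $B_1 \cup B_2$ in the hyperplane $(A, r(A))_=$.

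There is no real obstacle here: the statement is essentially the combinatorial shadow of the familiar fact that every base of $M$ meeting $A$ maximally decomposes into a base of $M|A$ and a base of $M/A$, so the argument is just a careful unpacking of the definitions of $M|A$, $M/A$, $\oplus$, and $(A, r(A))_=$.
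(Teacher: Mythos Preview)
Your proof is correct and arguably more self-contained than the paper's. The paper takes a structural route: it observes that ${\cal B}(M)\cap (A,r(A))_=$ is itself a matroid base system (being a face), names this matroid $M'$, and then uses the rank identity $r'(A)+r'(A^c)=r'(E)$ (via Lemma~\ref{lem:note}) to conclude that $A$ is a separator of $M'$, whence $M'$ splits as the direct sum of its restrictions to $A$ and to $A^c$; these two restrictions are identified with $M|A$ and $M/A$ at the outset. By contrast, you bypass the intermediate matroid $M'$ and verify the two inclusions directly from the standard characterization of independence in $M/A$ in terms of bases of $M|A$. The paper's approach dovetails with the polytope/face language it develops, while yours is the elementary textbook argument; both are short and neither requires any nontrivial idea.
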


\begin{proof}
Note that ${\cal B}(M|A)=({\cal B}(M)\cap (A,r(A))_=)|A$ and ${\cal B}(M/A)=({\cal B}(M)\cap (A,r(A))_=)|A^c.$
${\cal B}(M)\cap (A,r(A))_=$ is a matroid base system since every face of a matroid base system is a matroid base system.
Let $M'$ be the matroid ${\cal B}(M)\cap (A,r(A))_=$ with the rank function $r'$.
Since ${\cal B}(M')\subseteq (E, r(E))\cap (A,r(A))_=$, we have $r'(A)+r'(A^c)=r'(E)$ by Lemma \ref{lem:note}.
We have ${\cal B}(M)\cap (A,r(A))_= = {\cal B}(M|A)\oplus {\cal B}(M/A)$ since $r'(A)+r'(A^c)=r'(E)$.
\end{proof}

The base system ${\cal B}(M^*)$ of the dual matroid $M^*$ is given by $\{B^c|B \in {\cal B}(M)\}$. It is known that the connectivity of ${\cal B}(M^*)$ is equivalent to that of ${\cal B}(M)$.


\begin{lem}\label{lem:dualcont}
For a matroid $M$ and $A\subseteq E$,
$M^*|A^c$ is connected if and only if $M/A$ is connected.
\end{lem}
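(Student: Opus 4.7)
The plan is to reduce this to the standard matroid-duality identity $(M/A)^* = M^*\setminus A$ together with the fact, noted just before the lemma, that a matroid is connected if and only if its dual is.

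First I would rewrite $M^*|A^c$ using the convention introduced in the paper, namely $M|A = M\setminus A^c$; applied to $M^*$ on the ground set $E$ and the subset $A^c$, this gives $M^*|A^c = M^*\setminus A$. Next I would invoke the classical identity of matroid duality: for any matroid $M$ on $E$ and $A\subseteq E$, $(M/A)^* = M^*\setminus A$. (This is one of the basic identities relating contraction/deletion with the dual, and I would cite Oxley for it, since the book is already in the bibliography.) Combining the two observations yields $M^*|A^c = (M/A)^*$.

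Finally, I would apply the fact, stated just above the lemma, that a matroid is connected if and only if its dual is. Taking $N = M/A$, this gives that $N^*$ is connected iff $N$ is connected, i.e.\ $M^*|A^c$ is connected iff $M/A$ is connected, which is precisely the claim.

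There is essentially no obstacle here: the entire proof is two lines once the duality identity $(M/A)^* = M^*\setminus A$ is quoted. The only small point to verify is the bookkeeping of the paper's nonstandard notation $M|A = M\setminus A^c$, so that the restriction to $A^c$ of $M^*$ matches the deletion of $A$ from $M^*$ on the right-hand side of the duality identity.
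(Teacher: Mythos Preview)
Your proposal is correct and follows exactly the same route as the paper's proof: the paper writes $M^*|A^c = M^*\backslash A = (M/A)^*$ in one line, then invokes the fact that a matroid and its dual have the same connectivity. Your elaboration of the notational bookkeeping and the citation to Oxley for the duality identity $(M/A)^* = M^*\setminus A$ are appropriate, but the argument itself is identical.
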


\begin{proof}
$M^*|A^c=M^*\backslash A=(M/A)^*$. 
Note that the connectivity of $(M/A)^*$ is equal to that of $M/A$.
\end{proof}


\begin{thm}\label{thm:facetbase}
For a connected matroid $M$ on $E$, the following are equivalent.

(a) $(A,r(A))_\leq$ is a facet-defining inequality of the base system ${\cal B}(M)$.

(b) ${\cal B}(M) \cap (A,r(A))_=$ has the two connected components $A$ and $A^c$.

(c) $M|A$ and $M/A$ are both connected.

(d) $(A,r(A))_\leq$ is a facet-defining inequality of the independence system ${\cal I}(M)$
 and $(A^c,r^*(A^c))_\leq$ is a facet-defining inequality of the independence system ${\cal I}(M^*)$ of the dual matroid $M^*$.

\end{thm}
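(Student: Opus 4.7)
The plan is to prove the four-way equivalence in two blocks: first (a) $\Leftrightarrow$ (b) $\Leftrightarrow$ (c) via the direct-sum decomposition of the face ${\cal B}(M)\cap(A,r(A))_=$, and then (c) $\Leftrightarrow$ (d) via Edmonds' theorem applied to both $M$ and $M^*$, bridged by Lemma \ref{lem:dualcont}.

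For (a) $\Leftrightarrow$ (b) $\Leftrightarrow$ (c), the decisive identity is Lemma \ref{lem:contract}, which gives ${\cal B}(M)\cap(A,r(A))_= = {\cal B}(M|A)\oplus{\cal B}(M/A)$. Hence the connected components of this direct-sum matroid split cleanly into components of $M|A$ (all lying inside $A$) and components of $M/A$ (all lying inside $A^c$). Recall from Section \ref{subsec:base} that a face of the base polytope is a facet exactly when its underlying matroid has one more connected component than $M$. Since $M$ is connected, the face is a facet iff it has exactly two components; given the forced partition, this happens iff the two components are precisely $A$ and $A^c$, which in turn is equivalent to $M|A$ and $M/A$ each being connected. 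This yields (a) $\Leftrightarrow$ (b) $\Leftrightarrow$ (c) in one stroke.

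For (c) $\Leftrightarrow$ (d), Theorem \ref{thm:bifacet} tells us that $(A,r(A))_\leq$ is facet-defining of ${\cal I}(M)$ iff $A$ is a flat of $M$ and $M|A$ is connected, and that $(A^c,r^*(A^c))_\leq$ is facet-defining of ${\cal I}(M^*)$ iff $A^c$ is a flat of $M^*$ and $M^*|A^c$ is connected. Lemma \ref{lem:dualcont} replaces the latter connectivity by connectedness of $M/A$, so (d) reads as the two connectivity statements of (c) together with two flat conditions. The direction (d) $\Rightarrow$ (c) is then immediate. For (c) $\Rightarrow$ (d), I would extract the flat conditions from connectivity: $A$ being a flat of $M$ is equivalent to $M/A$ being loopless, and $A^c$ being a flat of $M^*$ is equivalent to $M|A$ being coloopless; both follow from the connectedness of the relevant minor together with the fact that the ambient connected $M$ on $|E|\geq 2$ is loopless and coloopless. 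The main obstacle I expect is this final bookkeeping --- making sure the loop/coloop argument handles the single-element boundary cases cleanly, since the connectivity of a one-element minor is essentially vacuous, and it is precisely the absence of loops and coloops in the ambient connected matroid $M$ that rules out the degenerate configurations.
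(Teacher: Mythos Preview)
Your proposal follows the paper's proof essentially step for step: (a)$\Leftrightarrow$(b) via the dimension-versus-components formula for the base polytope of a connected $M$, (b)$\Leftrightarrow$(c) via Lemma~\ref{lem:contract}, and (c)$\Leftrightarrow$(d) via Theorem~\ref{thm:bifacet} applied to both $M$ and $M^*$ together with Lemma~\ref{lem:dualcont}. The one place you go beyond the paper is in making the derivation of the flat conditions explicit (``$A$ a flat of $M \Leftrightarrow M/A$ loopless'' and its dual) and in flagging the single-element-minor boundary case; the paper handles (c)$\Rightarrow$(d) by the bare assertion that ``$A$ is a flat of $M$ when $M/A$ is connected'' (and dually), so your argument is at the same level of rigor as the source on precisely the point you were worried about.
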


\begin{proof}

[(a) $\leftrightarrow$ (b)] Since $(A,r(A))_\leq$ is a valid inequality to ${\cal B}(M)$, ${\cal B}(M) \cap (A,r(A))_=$ is a proper face of ${\cal B}(M)$. 
Any facet of the base system of the connected matroid is a matroid base system of dimension $|E|-2$. By the relation between the dimension of a matroid base system and the number of its connected components,
 $(A,r(A))_\leq$ is a facet-defining inequality of the base system if and only if ${\cal B}(M) \cap (A,r(A))_=$ has two connected components.

[(b) $\leftrightarrow$ (c)] By Lemma \ref{lem:contract}, (b) is equivalent to (c).

[(c) $\leftrightarrow$ (d)] Note that $A$ is a flat of $M$ when $M/A$ is connected.
Moreover $A^c$ is a flat of $M$ when $M|A$ is connected.
$(A,r(A))_\leq$ is a facet-defining inequality of the independence system of the matroid $M$ if and only if $A$ is a flat and $M|A$ is connected by Theorem \ref{thm:bifacet}.
$(A^c,r^*(A^c))_\leq$ is a facet-defining inequality of the independence system of the dual matroid $M^*$ if and only if $A^c$ is a flat and $M^*|A^c$ is connected by Theorem \ref{thm:bifacet}.  By Lemma \ref{lem:dualcont}, $M^*|A^c$ is connected if and only if $M/A$ is connected.
\end{proof}

Consequently, when $(A,r(A))_\leq$ is a facet-defining inequality of ${\cal B}(M)$, $A$ is a flat. We call such a flat a {\it facet-defining flat} of ${\cal B}(M)$. Note that $|A|>r(A)$ for any facet-defining inequality $(A,r(A))_\leq$.

Note that the decomposition in terms of matroid base polytopes is totally different from the decomposition by connected components.


We give an example of a facet of a matroid independence system that is not a facet of the matroid base system.

\begin{exa}\label{exa:csmis}
  Let $E=\{a,b,c,d,e,f\}$.
 Consider the matroid base system ${\cal
    B}(M)=(E,3)_= \cap (\{a,b,c,d\},2)_\leq\cap (\{a,b,e,f\},2)_\leq$ (the left of Figure \ref{fig:nonbase}). 
  However, the independence system $(E,3)_\leq\cap (\{a,b,c,d\},2)_\leq\cap (\{a,b,e,f\},2)_\leq$ does not satisfy submodularity 
$$r'(\{a,b,c,d\}) + r'(\{a,b,e,f\}) \geq r'(\{a,b,c,d,e,f\})+r'(\{a,b\})$$
as in Theorem \ref{thm:facetsub} since $r'(\{a,b\})=2$.
Therefore it is not a matroid independence system. 
The matroid independence system corresponding to ${\cal B}(M)$ has the rank function
  $r(A)=\max\{|B\cap A|\ |B\in {\cal B}(M)\}$.
 Therefore $r(\{a,b\})=1$.
 We have $${\cal I}(M)=(\{a,b,c,d\},2)_\leq \cap
  (\{a,b,e,f\},2)_\leq \cap (\{a,b\},1)_\leq, \mbox{and }$$
  $${\cal B}(M)={\cal I}(M)\cap (E,3)_= =(E,3)_=\cap (\{a,b,c,d\},2)_\leq \cap
  (\{a,b,e,f\},2)_\leq \cap (\{a,b\},1)_\leq.$$
  ${\cal I}(M)\cap (\{a,b\},1)_=$ is a facet of the independence system ${\cal I}(M)$ of matroid $M$.
Note that $\{a,b\}$ is a flat and $M|{\{a,b\}}$ is connected but not a facet-defining inequality of ${\cal B}(M)$ 
since $M/{\{a,b\}}$ is not connected. 
In fact, ${\cal B}(M)\cap (\{a,b\},1)_=$ has three connected components. Therefore ${\cal B}(M)\cap (\{a,b\},1)_=$ is not a facet of ${\cal B}(M)$.
The right of Figure \ref{fig:nonbase} depicts the dual matroid $M^*$, satisfying 
${\cal B}(M^*)=(E,3)_=\cap (\{c,d\},1)_\leq \cap (\{e,f\},1)_\leq$. Note that
${\cal B}(M^*)|\{c,d,e,f\}=(\{c,d\},1)_=\cap (\{e,f\},1)_=$ is not connected.

  \begin{figure}[ht]
    \begin{center}
      \leavevmode
\includegraphics[width=3.5cm]{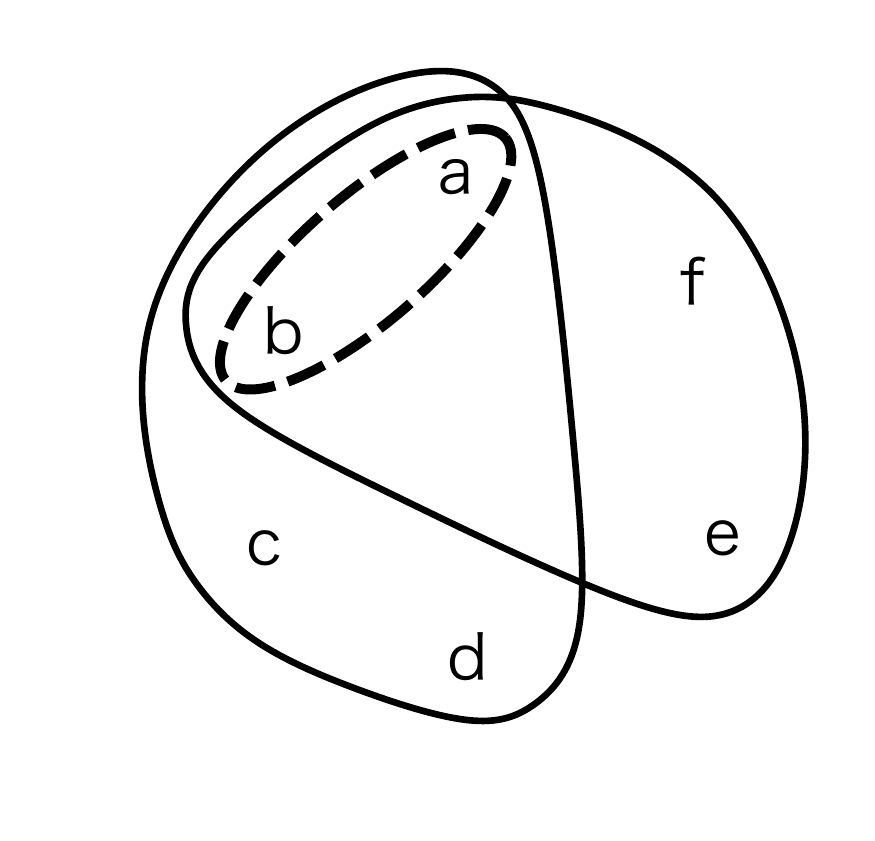}
\includegraphics[width=3.5cm]{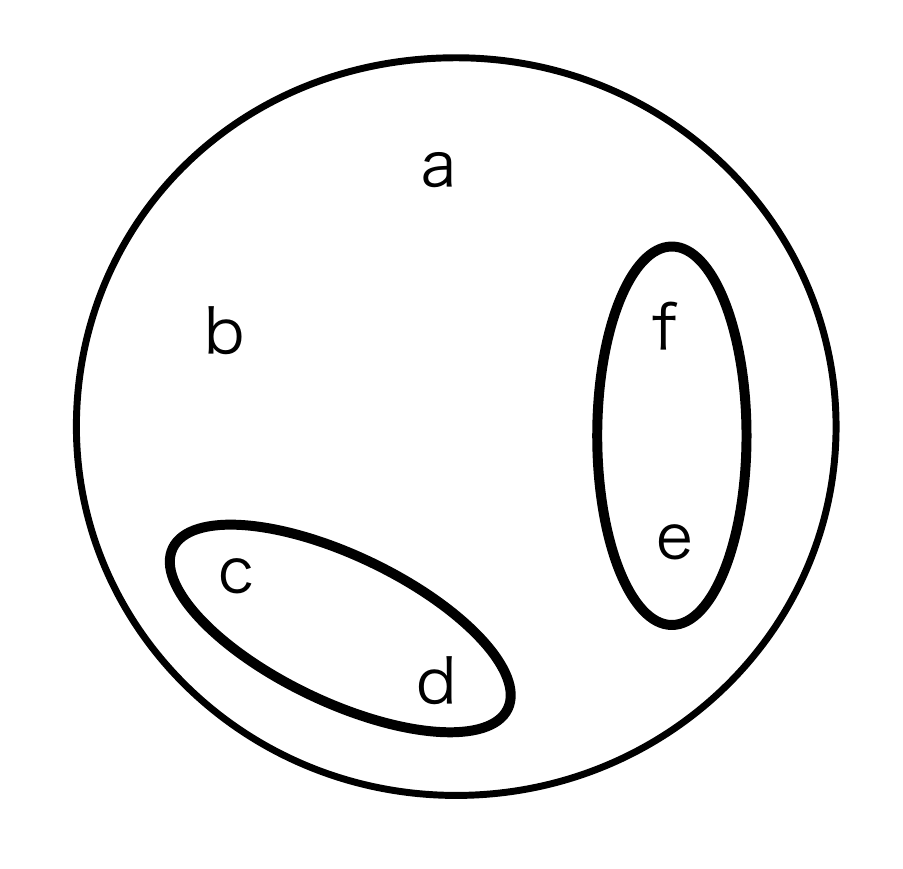}
    \end{center}
    \caption{${\cal B}(M)$ and ${\cal B}(M^*)$. ${\cal B}(M)\cap (\{a,b\},1)_=$ is a facet of the independence system ${\cal I}(M)$ but not a facet of the base system ${\cal B}(M)$.}\label{fig:nonbase}
  \end{figure}
\end{exa}

\section{The weak-map order and polytopal decompositions of matroid base systems}\label{sec:weak}

\subsection{The weak-map order and polytopal decompositions of matroid base systems of general rank}\label{subsec:weak}

Consider a partial ordering, called the {\it weak-map order},
 on the matroid base systems of rank $r$ on $E$ (See \cite{Crapo70}).
 For matroids $M_1$ and $M_2$ of the same rank on $E$, $M_1 \succeq M_2$ in the weak-map order  is defined by ${\cal B}(M_2)\subseteq {\cal B}(M_1)$.
 The maximum element of the weak-map order on all the matroids of rank $r$ on $E$ is the uniform matroid of rank $r$, denoted by $U_{r,|E|}$.  




The proof of the next lemma is straightforward.

\begin{lem}\label{lem:r1r2}
Assume that a matroid $M_1$ on $E$ has a rank function $r_1$ and a matroid $M_2$ on $E$ has a rank function $r_2$ with $r_1(E)=r_2(E)$.
${\cal B}(M_2)\subseteq {\cal B}(M_1)$ if and only if $r_2(A) \leq r_1(A)$ for all $A\subseteq E$.
\end{lem}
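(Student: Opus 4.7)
The plan is to prove both directions directly from the characterization of independent sets via rank inequalities, together with the fact that all bases of a matroid have cardinality equal to $r(E)$.

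For the forward direction, assume $\mathcal{B}(M_2)\subseteq \mathcal{B}(M_1)$ and fix $A\subseteq E$. I would use the fact that for any matroid with rank function $r$ we have $r(A)=\max\{|B\cap A|\ |\ B\in \mathcal{B}(M)\}$, because every independent set extends to a base. Hence one can pick $B_2\in \mathcal{B}(M_2)$ realizing $|B_2\cap A|=r_2(A)$. Since $B_2\in \mathcal{B}(M_1)$ by hypothesis, $B_2$ is in particular in $\mathcal{I}(M_1)$, so $|B_2\cap A|\leq r_1(A)$. This gives $r_2(A)\leq r_1(A)$.

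For the backward direction, assume $r_2(A)\leq r_1(A)$ for every $A\subseteq E$ and take an arbitrary $B_2\in \mathcal{B}(M_2)$. Since $B_2$ is independent in $M_2$ we have $|B_2\cap A|\leq r_2(A)$ for all $A\subseteq E$, and combining this with the hypothesis yields $|B_2\cap A|\leq r_1(A)$ for all $A\subseteq E$. By the representation $\mathcal{I}(M_1)=\bigcap_{A\subseteq E}(A,r_1(A))_\leq$ stated in Section \ref{sec:polytope}, we conclude $B_2\in \mathcal{I}(M_1)$. Finally, $|B_2|=r_2(E)=r_1(E)$ by the rank equality assumption, so $B_2$ is a maximal independent set of $M_1$, i.e., $B_2\in \mathcal{B}(M_1)$.

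There is essentially no obstacle here; the lemma is a direct translation of the weak-map/polytope-inclusion definition into the rank-function language, and both directions follow in one line each from the standard equivalence between bases, independent sets, and the defining inequalities $(A,r(A))_\leq$. The only point to keep explicit is the role of the assumption $r_1(E)=r_2(E)$, which is used exactly once, to promote $B_2\in \mathcal{I}(M_1)$ to $B_2\in \mathcal{B}(M_1)$ via the cardinality count.
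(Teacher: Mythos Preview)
Your proof is correct. The paper actually omits the proof entirely, stating only that it ``is straightforward''; your argument is precisely the standard one the paper has in mind, using the identity $r(A)=\max\{|B\cap A|\,:\,B\in\mathcal{B}(M)\}$ for one direction and the representation $\mathcal{I}(M_1)=\bigcap_{A\subseteq E}(A,r_1(A))_\leq$ together with $r_1(E)=r_2(E)$ for the other.
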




Next, we consider a polytopal decomposition of a matroid base system.

As already defined, a matroid base polytope is said to be decomposable if
it has a polytopal decomposition which consists of matroid base polytopes.
Next we rewrite a decomposition in terms of matroid base systems.

\begin{df}\label{df:decomp}
A matroid base system ${\cal B}(M)$ is {\it decomposable} if
there exist matroid base systems $\{{\cal B}_i\}_i$ satisfying the following conditions.

\begin{enumerate}
\item ${\cal B}(M)=\bigcup_i {\cal B}_i$,
\item For any $i$, the connected components of matroid $M_i$ corresponding to ${\cal B}_i$ are the same as those of $M$.
\item For any distinct $i$ and $j$, ${\cal B}_i\cap {\cal B}_j$ is a proper face of both ${\cal B}_i$ and ${\cal B}_j$, and there exists a hyperplane $(A,a)_=$ such that ${\cal B}_i\subseteq (A,a)_\leq$ and ${\cal B}_j\subseteq (A,a)_\geq$.
\item For any $i$ and each facet ${\cal B}$ of ${\cal B}_i$ with a facet-defining inequality $(A,a)_\leq$, ${\cal
    B}$ is a facet of ${\cal B}(M)$ or a facet of ${\cal B}_j$ for some unique $j$ with the facet-defining inequality $(A,a)_\geq$. 
\end{enumerate}
\end{df}

\begin{prop}
A matroid base system ${\cal B}(M)$ is decomposed into $\{{\cal B}(M_i)\}$ consisting of matroid base systems if and only if $B(M)$ has a polytopal decomposition $\{B(M_i)\}$ where $B(M)=\mbox{conv}({\cal B}(M))$ and $B(M_i)=\mbox{conv}({\cal B}(M_i))$ for any $i$.
\end{prop}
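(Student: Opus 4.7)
The plan is to unpack the correspondence between base systems and base polytopes developed earlier in the paper and verify that the four conditions of Definition~\ref{df:decomp} translate, one by one, into the geometric requirements for $\{B(M_i)\}$ to be a polytopal decomposition of $B(M)$. The key tool throughout is that $B(M_i)=\mbox{conv}({\cal B}(M_i))$ and, conversely, the vertices of $B(M_i)$ are exactly the 01-points in ${\cal B}(M_i)$, so statements about faces of polytopes correspond to statements about intersections of base systems with hyperplanes $(A,a)_=$.

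For the direction $(\Rightarrow)$, assume ${\cal B}(M)=\bigcup_i {\cal B}_i$ as in Definition~\ref{df:decomp} and let $M_i$ denote the matroid with base system ${\cal B}_i$. Condition~(2) together with the formula $\dim B(M)=|E|-(\mbox{number of connected components})$ gives $\dim B(M_i)=\dim B(M)$, so each piece is full-dimensional inside $B(M)$. Condition~(3) supplies, for each pair $i\ne j$, a hyperplane $(A,a)_=$ with ${\cal B}_i$ and ${\cal B}_j$ in opposite closed half-spaces; passing to convex hulls, the polytopes $B(M_i)$ and $B(M_j)$ sit on opposite sides of this hyperplane, so their relative interiors are disjoint. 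The identity $B(M)=\bigcup_i B(M_i)$ then follows by combining condition~(1), which puts every vertex of $B(M)$ inside some $B(M_i)$, with the facet-matching supplied by condition~(4), which prevents any point of $B(M)$ from being missed; condition~(4) simultaneously gives the facet-incidence required of a polytopal subdivision.

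For $(\Leftarrow)$, suppose $\{B(M_i)\}$ is a polytopal decomposition of $B(M)$ and set ${\cal B}_i:={\cal B}(M_i)$. Condition~(1) is immediate, since the 01-points of $B(M)$ lie in some $B(M_i)$ by the covering property, and the 01-points of each $B(M_i)$ belong to ${\cal B}(M)$. Condition~(2) follows by reading the dimension formula backwards: $\dim B(M_i)=\dim B(M)$ together with $B(M_i)\subseteq B(M)$ forces $M_i$ and $M$ to share the same partition into connected components. Condition~(3) is the standard hyperplane-separation theorem for two convex polytopes with disjoint relative interiors, and in this setting one may even take the separating hyperplane to be the affine span of a common facet when the pieces are adjacent. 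Condition~(4) is the facet-matching condition of a polytopal subdivision, translated into the language of base systems.

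The main obstacle I expect is handling the intersections $B(M_i)\cap B(M_j)$ correctly, since in general the convex hull of a union is strictly larger than the union of the convex hulls. What rescues the argument is that condition~(3) realises $B(M_i)\cap B(M_j)$ as lying in a supporting hyperplane of each piece, so it is a face of both; by Theorem~\ref{thm:facetbase} and Lemma~\ref{lem:contract}, such a face is again a matroid base polytope whose 01-vertices are exactly ${\cal B}_i\cap {\cal B}_j$. This identification between polytope-level intersections and set-system-level intersections is what allows the equality $B(M)=\bigcup_i B(M_i)$ in the forward direction and the derivation of condition~(3) in the backward direction to go through cleanly.
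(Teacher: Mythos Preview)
Your proposal is correct and follows essentially the same route as the paper: translate each of the four conditions of Definition~\ref{df:decomp} into its geometric counterpart via the vertex/face correspondence between ${\cal B}(M_i)$ and $B(M_i)$. The paper's forward direction is organised slightly differently---it proves $B(M)=\bigcup_i B(M_i)$ by arguing that the union is \emph{convex} (using condition~(d) to show every boundary facet of $\bigcup_i B(M_i)$ is already a facet of $B(M)$), rather than your phrasing that condition~(4) ``prevents any point of $B(M)$ from being missed''---but these are two ways of saying the same thing, and your treatment of the intersection $B(M_i)\cap B(M_j)$ as a common face is exactly what the paper does.
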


\begin{proof}
Assume that ${\cal B}(M)$ is decomposed into $\{{\cal B}(M_i)\}$ so that Conditions (a) to (d) in Definition \ref{df:decomp} are satisfied. Let $B(M)=\mbox{conv}({\cal B}(M))$ and $B(M_i)=\mbox{conv}({\cal B}(M_i))$. We first show $B(M)=\bigcup B(M_i)$. Every extreme point of $B(M)$ belongs to $\bigcup B(M_i)$ by Condition (a). Therefore, it suffices to show that $\bigcup B(M_i)$ is convex. By Condition (b), $B(M_i)$ and $B(M)$ have the same dimension for any $i$.
Even though $\bigcup\{B(M_i)\}$ may not be convex, we can consider the inequalities which define the boundary of $\bigcup\{B(M_i)\}$ on each connected component. Any inequality that defines the boundary of $\bigcup\{B(M_i)\}$ is a facet-defining inequality of $B(M)$ by Condition (d). Hence $\bigcup B(M_i)$ is convex. Therefore, we have $B(M)=\bigcup B(M_i)$. 

By Condition (d), $B(M_i)\cap B(M_j)=\mbox{conv}({\cal B}_i\cap {\cal B}_j)$ for distinct $i$ and $j$. By Condition (c), $B(M_i)\cap B(M_j)$ is a face of $B(M_i)$ and $B(M_j)$, and $B(M_i)\cap B(M_j)=\mbox{conv}({\cal B}_i\cap {\cal B}_j$. Therefore $B(M)$ has a polytopal decomposition $\{B(M_i)\}$.

Conversely, we assume that $B(M)$ has a polytopal decomposition $\{B(M_i)\}$. Let ${\cal B}(M)$ be the collection of (the supports of) the extreme points of $B(M)$, and ${\cal B}(M_i)$ be the collection of the extreme points of $B(M_i)$ for each $i$. Then, Conditions (a) to (d) in Definition \ref{df:decomp} are obviously satisfied.
\end{proof}

The next lemma is trivial but important when we consider the relation between the weak-map order and polytopal decompositions.

\begin{lem}\label{lem:divweak}
Every matroid base polytope belonging to a polytopal decomposition of a matroid base polytope $B(M)$ is smaller than the matroid base polytope $B(M)$ with respect to weak-map order.
\end{lem}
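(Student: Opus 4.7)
The plan is to unpack the definitions and reduce the statement to the observation that a matroid base polytope contains no stray $0/1$ points beyond its extreme points. Recall that by Definition \ref{df:decomp}, if $B(M)=\bigcup_i B(M_i)$ is a polytopal decomposition, then in particular $B(M_i)\subseteq B(M)$ for every $i$ (this is immediate from the union expression together with the fact that $B(M_i)$ is a convex subset of $B(M)$). Weak-map order means ${\cal B}(M_i)\subseteq {\cal B}(M)$, so it suffices to show that every base of $M_i$ is a base of $M$.

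First I would take an arbitrary base $B\in{\cal B}(M_i)$. Its incidence vector $\chi_B$ is an extreme point of $B(M_i)=\mbox{conv}({\cal B}(M_i))$, and in particular $\chi_B\in B(M_i)\subseteq B(M)$. Next, I would invoke the well-known fact (already used implicitly in Section \ref{subsec:base}) that the extreme points of the base polytope $B(M)$ are exactly the incidence vectors of the bases of $M$, and moreover these are the \emph{only} $0/1$ points of $B(M)$, because every $0/1$ point of $B(M)$ lies on the hyperplane $\langle p,\chi_E\rangle=r(E)$ and satisfies the independence inequalities $\langle p,\chi_A\rangle\le r(A)$, so its support is a set of size $r(E)$ that is independent in $M$, hence a base of $M$. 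Since $\chi_B$ is a $0/1$ vector lying in $B(M)$, it follows that $B\in{\cal B}(M)$.

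Combining these observations yields ${\cal B}(M_i)\subseteq {\cal B}(M)$, which is precisely $M\succeq M_i$ in the weak-map order. The argument is symmetric in each piece $M_i$, so every member of the decomposition is below $M$ in the weak-map order.

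There is essentially no obstacle here: the only thing one has to be a little careful about is the justification that the $0/1$ points of $B(M)$ coincide with the extreme points of $B(M)$, but this is standard and follows directly from the linear-inequality description $B(M)=\{p\in{\bf R}^E\mid \langle p,\chi_A\rangle\le r(A)\ \forall A\subseteq E,\ \langle p,\chi_E\rangle=r(E)\}$ combined with the identification of ${\cal B}(M)$ with the $0/1$ solutions of this system made explicit in Section \ref{subsec:base}.
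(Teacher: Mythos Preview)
Your argument is correct. The paper does not give a proof of this lemma at all; it merely introduces it with the remark that the lemma is ``trivial but important.'' Your proof fills in exactly the right details. One could shorten it slightly: by the Proposition immediately preceding the lemma, a polytopal decomposition of $B(M)$ corresponds to a decomposition in the sense of Definition~\ref{df:decomp}, whose condition~(a) already reads ${\cal B}(M)=\bigcup_i{\cal B}_i$ and hence gives ${\cal B}(M_i)\subseteq{\cal B}(M)$ directly, without passing through the $0/1$-point characterization. But your route via $B(M_i)\subseteq B(M)$ and the identification of $0/1$ points of $B(M)$ with bases of $M$ is perfectly valid and self-contained.
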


\begin{lem}\label{lem:simple}
A loopless matroid base system ${\cal B}(M)$ is decomposable if and only if the base system of the simplified matroid is decomposable.
\end{lem}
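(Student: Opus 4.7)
The plan is to reduce everything to the operation of adding a single parallel copy of an element, and to argue that this operation preserves and reflects decomposability. Once this one-element version is established, iterating gives the full statement, since any loopless matroid $M$ is obtained from its simplification $M'$ by successively duplicating elements.

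First I would set up the dictionary between $B(M)$ and $B(M')$. Let $M$ be loopless with parallel classes $P_{e_1},\dots,P_{e_k}$ and let $E'=\{e_1,\dots,e_k\}$ be the ground set of $M'$. Then $B\subseteq E$ is a base of $M$ if and only if $|B\cap P_{e_j}|\leq 1$ for every $j$ and $\{e_j:B\cap P_{e_j}\neq\emptyset\}\in\mathcal{B}(M')$. Equivalently, writing $\pi:\mathbb{R}^E\to\mathbb{R}^{E'}$ for the linear map that sums the coordinates inside each parallel class, one has $\pi(B(M))=B(M')$ and, on $01$-vectors, the restriction $\pi\colon\mathcal{B}(M)\to\mathcal{B}(M')$ is surjective. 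I would record the corresponding statement for facet-defining flats: by Theorem~\ref{thm:facetbase}, a non-trivial facet of $B(M)$ comes from a flat $A\subseteq E$ with $M|A$ and $M/A$ connected, and such $A$ is necessarily a union of parallel classes of $M$; hence facets of $B(M)$ correspond bijectively via $A\mapsto \pi(A)=A\cap E'$ to facets of $B(M')$, while trivial facets $(\{f\},0)_=$ and $(\{f\},1)_=$ in $M$ correspond to trivial facets coming from $P_{e_j}$ together with the inequality $|B\cap P_{e_j}|\leq 1$ that is present for every base of $M$.

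For the direction ``$M'$ decomposable $\Rightarrow$ $M$ decomposable'', I would take a decomposition $\mathcal{B}(M')=\bigcup_i\mathcal{B}(M'_i)$ and lift each $M'_i$ to a matroid $M_i$ on $E$ with the same parallel classes as $M$ and simplification $M'_i$. Condition (a) of Definition~\ref{df:decomp} is immediate from the bijective description of $\mathcal{B}(M)$ via $\mathcal{B}(M')$; condition (b) holds because the connected components of $M_i$ are obtained from those of $M'_i$ by replacing each $e\in E'$ by $P_{e}$, and similarly for $M$, $M'$; a separating hyperplane $(A',a)_=$ in $E'$ for ${\cal B}'_i,{\cal B}'_j$ lifts to $(\pi^{-1}(A'),a)_=$, giving condition (c); and condition (d) follows from the facet correspondence above, since an internal facet-defining flat of some $M_i$ is a union of parallel classes and hence is the $\pi$-preimage of a facet-defining flat of $M'_i$.

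For the converse ``$M$ decomposable $\Rightarrow$ $M'$ decomposable'', the key point and the main obstacle is that the pieces $M_i$ in an arbitrary decomposition of $\mathcal{B}(M)$ need not have the same parallel classes as $M$. I would overcome this by showing that every matroid $M_i$ with $\mathcal{B}(M_i)\subseteq\mathcal{B}(M)$ that occurs in such a decomposition automatically respects the parallel classes of $M$. Indeed, any facet of $B(M_i)$ that lies in the interior of $B(M)$ must be separated by a facet of some neighbour $B(M_j)$, hence must be a union of parallel classes by the facet description; and every facet of $B(M_i)$ that lies on $\partial B(M)$ is itself a facet of $B(M)$ by condition (d) and is already a union of parallel classes. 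Since $M_i$ is determined by its facet-defining flats, every parallel pair $\{e,f\}$ of $M$ is also a parallel pair of $M_i$. Then I can simplify each $M_i$ to obtain $M'_i$ on $E'$, and $\pi$ carries the decomposition of $B(M)$ to a decomposition of $B(M')$; the four conditions are verified by the same correspondence as in the forward direction, read backwards.

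The main obstacle, as indicated, is the verification in the converse that each $M_i$ inherits the parallel classes of $M$; once this rigidity is in place, everything else reduces to the clean dictionary between facets of $B(M)$ and facets of $B(M')$ provided by Theorem~\ref{thm:facetbase}.
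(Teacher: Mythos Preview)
Your forward direction (``$M'$ decomposable $\Rightarrow$ $M$ decomposable'') is fine and matches what the paper dismisses as ``obvious''.

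For the converse, however, you are working much harder than necessary and the facet argument you sketch has a gap. You want to conclude that every parallel pair $\{x,y\}$ of $M$ is parallel in each $M_i$, and you try to derive this from the claim that every facet-defining flat of $M_i$ is a union of parallel classes of $M$. But a facet-defining flat of $M_i$ is, a priori, only a union of parallel classes of $M_i$; your justification ``hence must be a union of parallel classes by the facet description'' is exactly the statement you are trying to prove, so the argument is circular. Moreover, even if you had that every non-trivial facet-defining flat of $M_i$ were a union of $M$-parallel classes, you would still have to deal with the trivial facets $(\{x\},1)_\leq$ and explain why $\{x,y\}$ is forced to have rank~$1$ in $M_i$; the step ``since $M_i$ is determined by its facet-defining flats, every parallel pair of $M$ is also a parallel pair of $M_i$'' is not justified.

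The paper bypasses all of this with a one-line rank comparison. Since $\mathcal{B}(M_i)\subseteq\mathcal{B}(M)$, Lemma~\ref{lem:r1r2} gives $r_i\leq r$; in particular $r_i(\{x,y\})\leq r(\{x,y\})=1$ whenever $x,y$ are parallel in $M$. On the other hand, condition~(b) in Definition~\ref{df:decomp} forces $M_i$ to be loopless (a loop would be its own connected component, but $M_i$ and $M$ have the same connected components and $M$ is loopless), so $r_i(\{x,y\})\geq 1$. Hence $r_i(\{x,y\})=1$ and $x,y$ are parallel in $M_i$. This is the ``rigidity'' you were after, obtained without any facet analysis; once you have it, your pushforward of the decomposition along $\pi$ works as you describe.
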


\begin{proof}
Assume that a matroid base system ${\cal B}(M)$ is decomposed into $\{{\cal B}(M_i)\}$. It suffices to show that $x, y\in E$ are parallel on $M_i$ when $x, y\in E$ are parallel on $M$ for $M_i$ in the decomposition with its rank function $r_i$. Since $M_i$ has no loop, $0< r_i(\{x,y\}) \leq r(\{x,y\})$. Therefore $r_i(\{x,y\}) =1$. 

The converse direction is obvious.
\end{proof}

Therefore we have only to consider simple matroids.

Next we consider the relation between decomposability and matroid connectivity. 
\begin{lem}
A matroid base system ${\cal B}(M)$ is not decomposable if and only if the matroid base system of no connected component of the matroid $M$ is decomposable.
\end{lem}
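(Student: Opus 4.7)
The plan is to write $M = M_1\oplus\cdots\oplus M_k$ with connected components on disjoint ground sets $E_1,\dots,E_k$, so $B(M) = B(M_1)\times\cdots\times B(M_k)$ in $\prod_l\mathbf{R}^{E_l}$, and handle the two directions separately. For the easy direction ($\Leftarrow$): if ${\cal B}(M_{j_0})$ decomposes as $\bigcup_i{\cal B}(L_i)$ per Definition~\ref{df:decomp}, set $N_i := L_i\oplus\bigoplus_{l\ne j_0}M_l$, so $B(N_i) = B(L_i)\times\prod_{l\ne j_0}B(M_l)$. Multiplication by the fixed factor $\prod_{l\ne j_0}B(M_l)$ preserves covers, faces, facets, and hyperplane separations (any $01$-hyperplane in $\mathbf{R}^{E_{j_0}}$ lifts trivially to $\mathbf{R}^E$), so conditions (a)--(d) for $\{{\cal B}(L_i)\}$ lift directly to $\{{\cal B}(N_i)\}$.

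For the converse, suppose ${\cal B}(M) = \bigcup_{i\in I}{\cal B}(N_i)$ is a decomposition with $|I|\ge 2$. By Condition~(b), each $N_i$ has the same connected components as $M$, so $N_i = \bigoplus_l N_{i,l}$ and $B(N_i) = \prod_l B(N_{i,l})$. The key structural fact -- call it \emph{facet localization} -- is that every facet-defining inequality $(A,r_{N_i}(A))_\le$ of ${\cal B}(N_i)$ has the form $A = A'\cup\bigcup_{l\in L}E_l$, with $L\subseteq\{1,\dots,k\}\setminus\{j\}$ and $A' = A\cap E_j$ a facet-defining flat of the connected matroid $N_{i,j}$ for a unique index $j$. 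This follows from the principle (preceding Theorem~\ref{thm:facetbase}) that a facet of ${\cal B}(N_i)$ adds exactly one connected component: by Lemma~\ref{lem:contract} the corresponding face is $\bigoplus_l\bigl(N_{i,l}|(A\cap E_l)\oplus N_{i,l}/(A\cap E_l)\bigr)$, and any $l$ with $\emptyset\subsetneq A\cap E_l\subsetneq E_l$ contributes at least two components, so exactly one such $l$ can occur. Via Condition~(d), this forces any two pieces sharing a facet to agree on every component except one.

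Since $|I|\ge 2$ and the dual graph of any polytopal decomposition of a convex polytope is connected, pick adjacent pieces $N_1,N_2$; by facet localization they differ only in some component $j_0$. Fix a common basis $B_l\in{\cal B}(N_{1,l}) = {\cal B}(N_{2,l})$ for each $l\ne j_0$ and slice by $S = \{p : p|_{E_l} = \chi_{B_l}\text{ for all }l\ne j_0\}$. Then $B(M)\cap S$ is identified with $B(M_{j_0})$, and $B(N_i)\cap S$ is identified with $B(N_{i,j_0})$ exactly when $B_l\in{\cal B}(N_{i,l})$ for all $l\ne j_0$, and is empty otherwise. This produces a covering of $B(M_{j_0})$ containing at least the two distinct pieces $B(N_{1,j_0})$ and $B(N_{2,j_0})$.

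I expect the main obstacle to be verifying that this induced covering satisfies Definition~\ref{df:decomp} in full. Condition~(b) is immediate. For (c), since $B(N_i)\cap B(N_{i'}) = \prod_l\bigl(B(N_{i,l})\cap B(N_{i',l})\bigr)$ is a face of the product $B(N_i) = \prod_l B(N_{i,l})$, it must itself be a product of faces of the $B(N_{i,l})$'s, so $B(N_{i,j_0})\cap B(N_{i',j_0})$ is a face of both sliced pieces; an original separating hyperplane $(A,a)_=$ restricts on $S$ to $\bigl(A\cap E_{j_0},\,a-\sum_{l\ne j_0}|A\cap B_l|\bigr)_=$, a $01$-hyperplane separating the sliced pieces whenever $A\cap E_{j_0}\ne\emptyset$, and the technical point is arranging this nondegeneracy when the two pieces differ in component $j_0$. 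Condition~(d) transfers via facet localization: each facet of $B(N_{i,j_0})$ lifts to a facet of $B(N_i)$, which by the original Condition~(d) is either a facet of $B(M)$ (hence, by the same localization analysis applied to $M$, a facet of $B(M_{j_0})$) or shared with $B(N_{i',j_0})$ for some $i'$ differing from $i$ only in component $j_0$.
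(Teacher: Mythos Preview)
Your argument is correct and close in spirit to the paper's: both exploit the product structure $B(M)=\prod_l B(M_l)$ together with the fact (your ``facet localization'') that every facet of a piece $N_i=\bigoplus_l N_{i,l}$ affects exactly one component. The divergence is in how the induced decomposition of a single component is extracted. The paper fixes a piece $M_l$ and a component index $k$ with $M_l|E_k\neq M|E_k$, and takes as the decomposition of ${\cal B}(M|E_k)$ the set $\{{\cal B}(M_i|E_k):{\cal B}(M_i|E_k^c)={\cal B}(M_l|E_k^c)\}$, i.e.\ it keeps only those pieces matching $M_l$ \emph{exactly} on the complement. You instead slice by fixing particular bases $B_l$ in the other components and keep every $N_i$ whose $l$-th factor contains $\chi_{B_l}$. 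Your criterion is weaker, so your collection can contain duplicates (distinct $N_i,N_{i'}$ with $N_{i,j_0}=N_{i',j_0}$ that differ in some component $l\neq j_0$ yet both contain the chosen $B_l$); the paper's exact-match criterion rules this out automatically, since two pieces agreeing on both $E_k$ and $E_k^c$ are equal. On the other hand, your slicing makes Condition~(a) immediate---the slice of a cover is a cover---whereas the paper's subcollection requires a separate covering argument that its proof leaves implicit. The cleanest way to finish your route and avoid the delicate direct verification of (c)--(d) (duplicates, and the possible degeneration $A\cap E_{j_0}\in\{\emptyset,E_{j_0}\}$ of the restricted separating hyperplane for non-adjacent pairs) is to invoke the Proposition immediately following Definition~\ref{df:decomp}: the affine slice of a polytopal subdivision is again a polytopal subdivision, its cells here are full-dimensional matroid base polytopes by Condition~(b), and you have exhibited at least two distinct ones, so that Proposition converts the slice back into a decomposition of ${\cal B}(M_{j_0})$ in the required sense.
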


\begin{proof}
When the base system of some connected component of $M$ is decomposable, we can make a decomposition of ${\cal B}(M)$ in terms of the decomposition of such a connected component.

Conversely, assume that a matroid base system ${\cal B}(M)$ with its rank function $r$ is decomposed into ${\cal D}=\{{\cal B}(M_i)\}$. Let $\{E_k\}$ be the partition consisting of the separators induced by the connected components, which are the underlying sets of the connected components of $M$. We can take $k$ and $l$ so that ${\cal B}(M|E_k) \neq {\cal B}(M_l|E_k)$. $(A,r_l(A))_=$ is a facet-defining equality of $M_l$ with its rank function $r_l$ and ${\cal B}(M_j)\subseteq (A,r_l(A))_\geq$ for some $M_j\in {\cal D}$. Therefore $(A,r_l(A))_=$ is also a facet-defining equality of some $M_l|E_k$.
Therefore $\{{\cal B}(M_i|E_k) |\ {\cal B}(M_l|E_k^c) = {\cal B}(M_i|E_k^c), {\cal B}(M_i)\in {\cal D}\}$ is a decomposition of ${\cal B}(M|E_k)$.
\end{proof}

Therefore we have only to consider connected matroids when we consider decomposability.
Chatlain and Alfonsin\cite{Chatelain09} investigated the relation between decomposability of a matroid base polytope and its connected components.

\subsection{2-decomposability of matroid base systems}\label{sec:2decomp}

As already defined, when a matroid base polytope has a
polytopal decomposition which consists of exactly two matroid base polytopes, such a polytope is said to be {\it 2-decomposable}. 

We can rewrite this in terms of matroid base systems as follows.

\begin{df}\label{df:2decomposable}
For a matroid $M$, ${\cal B}(M)$ is 2-decomposable if there exists a
hyperplane $(A,a)_=$ such that 
\begin{enumerate}
\item ${\cal B}(M)\cap (A,a)_\geq $ is a matroid base system,
\item ${\cal B}(M)\cap (A,a)_\leq$ is a matroid base system,
\item ${\cal B}(M)\cap (A,a)_>$ is not empty, and
\item ${\cal B}(M)\cap (A,a)_<$ is not empty. 
\end{enumerate}
\end{df}

\begin{thm}\cite{Kashiwabara01}\label{thm:two}
  A matroid base system ${\cal B}(M)$ can be decomposed into two matroid base systems if and only if
 there exists a hyperplane $(A,r)_=$ such that 
\begin{itemize}
\item ${\cal B}(M)\cap (A,r)_=$ is a matroid base system,
\item ${\cal B}(M)\cap (A,r)_>$ and ${\cal B}(M)\cap (A,r)_<$ are non-empty.
\end{itemize}
\end{thm}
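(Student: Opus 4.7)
The plan is to prove the two directions separately, leveraging Definition \ref{df:2decomposable} for the easy direction and the cited cross-section theorem of Kashiwabara \cite{Kashiwabara01} for the hard direction.

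For the forward direction, suppose ${\cal B}(M)$ admits a 2-decomposition. By Definition \ref{df:2decomposable} there exist a hyperplane $(A,a)_=$ and matroid base systems ${\cal B}_1,{\cal B}_2$ with ${\cal B}_1\subseteq (A,a)_\leq$, ${\cal B}_2\subseteq (A,a)_\geq$, ${\cal B}(M)={\cal B}_1\cup {\cal B}_2$, and ${\cal B}_1\cap {\cal B}_2$ a proper face of each. Then ${\cal B}(M)\cap (A,a)_= = {\cal B}_1\cap {\cal B}_2$ is a face of the matroid base system ${\cal B}_1$, hence itself a matroid base system (using the fact stated after the preceding proposition that every face of a matroid base system is a matroid base system). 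Setting $r:=a$, the two open half-spaces ${\cal B}(M)\cap (A,r)_>$ and ${\cal B}(M)\cap (A,r)_<$ are nonempty, for otherwise one of ${\cal B}_1$, ${\cal B}_2$ would be contained in the hyperplane and fail to have full dimension, contradicting the third condition of Definition \ref{df:2decomposable}.

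For the backward direction, assume ${\cal B}(M)\cap (A,r)_=$ is a matroid base system and both ${\cal B}(M)\cap (A,r)_>$ and ${\cal B}(M)\cap (A,r)_<$ are nonempty. Passing to the corresponding polytopes, the hypothesis says that the cross-section $B(M)\cap \{p:\langle \chi_A,p\rangle = r\}$ equals $\conv({\cal B}(M)\cap (A,r)_=)$, which is an integral base polytope (matroid base polytopes are integral base polytopes of rank functions). Applying the theorem of Kashiwabara \cite{Kashiwabara01} quoted in the introduction, the integral base polytope $B(M)$ decomposes into two integral base polytopes $P_\leq$ and $P_\geq$ separated by this hyperplane. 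Both $P_\leq$ and $P_\geq$ lie inside the hypercube $[0,1]^E$, so by the remark in the introduction that every integral base polytope which is not (a translate of) a matroid base polytope is 2-decomposable, together with the fact that the 01-vertices of $P_\leq, P_\geq$ are exactly ${\cal B}(M)\cap (A,r)_\leq$ and ${\cal B}(M)\cap (A,r)_\geq$, these two halves are matroid base polytopes. This yields a 2-decomposition of ${\cal B}(M)$.

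The main obstacle is the backward direction: producing submodular rank functions for ${\cal B}(M)\cap (A,r)_\leq$ and ${\cal B}(M)\cap (A,r)_\geq$ directly from the assumption that only the slice ${\cal B}(M)\cap (A,r)_=$ is a matroid base system is exactly the hard content of the cross-section theorem. The cleanest route is therefore to invoke \cite{Kashiwabara01} rather than redevelop that argument; the only bookkeeping needed is to check that working inside $[0,1]^E$ keeps us in the matroid (rather than merely integral-base) setting, which follows because the vertices of $B(M)$ and of any sub-polytope obtained by cutting with $(A,r)_=$ remain 01-vectors.
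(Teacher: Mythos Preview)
The paper gives no proof of Theorem~\ref{thm:two}; it simply attributes the result to \cite{Kashiwabara01}. So there is nothing in the paper to compare your argument against, and your proposal is really an attempt to supply a proof where the paper has none. That said, there is a genuine gap in your backward direction.

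Your forward direction is fine: with ${\cal B}_1={\cal B}(M)\cap(A,a)_\leq$ and ${\cal B}_2={\cal B}(M)\cap(A,a)_\geq$ taken directly from Definition~\ref{df:2decomposable}, the intersection ${\cal B}_1\cap{\cal B}_2={\cal B}(M)\cap(A,a)_=$ is a face of a matroid base system, hence a matroid base system, and conditions (c) and (d) of Definition~\ref{df:2decomposable} give nonemptiness of the open half-spaces.

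The problem is in the backward direction, at the sentence ``the hypothesis says that the cross-section $B(M)\cap\{p:\langle\chi_A,p\rangle=r\}$ equals $\conv({\cal B}(M)\cap(A,r)_=)$.'' The hypothesis of the theorem is a statement about the \emph{01-points} on the hyperplane, namely that ${\cal B}(M)\cap(A,r)_=$ is a matroid base system. It does \emph{not} immediately say that the full polytopal slice $B(M)\cap H$ coincides with the convex hull of those 01-points; a priori the slice could have non-01 vertices and be strictly larger. Establishing that equality is precisely the nontrivial content needed before you can invoke the integral-base cross-section theorem of \cite{Kashiwabara01}, so your reduction is circular at this step. In effect you are assuming what has to be proved.

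A second, smaller issue: your justification that the two halves $P_\leq,P_\geq$ are matroid (not merely integral) base polytopes is muddled. The remark that ``every integral base polytope which is not a translate of a matroid base polytope is 2-decomposable'' is irrelevant here and does not yield the conclusion. The correct observation is simply that $P_\leq,P_\geq\subseteq[0,1]^E$ are integral base polytopes, hence all of their vertices are integer and therefore 01; an integral base polytope whose vertices are all 01 is a matroid base polytope. You should say this directly and drop the appeal to 2-decomposability. But note that even this step presupposes that $P_\leq,P_\geq$ are integral base polytopes, which again rests on the unproved identification of the slice with a matroid base polytope.
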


Kim\cite{Kim} also give a necessary and sufficient condition for 2-decompsability.
We give an example of a 2-decomposable matroid.

\begin{exa}\label{exa:2decomp}
  Let $E=\{a,b,c,d,e\}$.
 Consider the matroid independence system
  ${\cal I}(M)=(E,3)_\leq\cap (\{a,b,c\},2)_\leq$, illustrated in the
  first figure of Figure \ref{fig:m}. Then ${\cal B}(M)={\cal I}(M)\cap
  (E,3)_= = (E,3)_=\cap (\{a,b,c\},2)_\leq$.
 Even by adding $(\{c,d,e\},2)_=$ into the representation,
 the independence system ${\cal I}(M)\cap (\{c,d,e\},2)_\leq=(E,3)_\leq\cap
  (\{a,b,c\},2)_\leq\cap (\{c,d,e\},2)_\leq$,
 illustrated in the second figure, still satisfies submodularity as in Theorem  \ref{thm:facetsub}.
  By Theorem \ref{thm:two}, the base system ${\cal B}(M)$ of this matroid is divided into two base systems ${\cal B}(M_1)$
  and ${\cal B}(M_2)$ by hyperplane $(\{c,d,e\},2)_=$.
  The base system of one matroid $M_1$ is
$${\cal B}(M_1)={\cal B}(M)\cap (\{c,d,e\},2)_\leq=(E,3)_=\cap (\{a,b,c\},2)_\leq \cap (\{c,d,e\},2)_\leq,$$
  illustrated in the second figure of Figure \ref{fig:m}.
  The base system of the other matroid $M_2$ is 
  \begin{eqnarray*}
{\cal B}(M_2)&=&{\cal B}(M)\cap (\{c,d,e\},2)_\geq=(E,3)_=\cap (\{a,b,c\},2)_\leq\cap  (\{c,d,e\},2)_\geq\\
&=&(E,3)_=\cap (\{a,b,c\},2)_\leq \cap (\{a,b\},1)_\leq=(E,3)_=\cap (\{a,b\},1)_\leq,
  \end{eqnarray*}
 illustrated in the third figure of Figure \ref{fig:m}, since $(\{a,b\},1)_\leq\subseteq (\{a,b,c\},2)_\leq$
  by Lemma \ref{lem:facetinc}.
 The matroid base system corresponding to the cross-section by the hyperplane $(\{c,d,e\},2)_=$ is 
  \begin{eqnarray*}
{\cal B}(M_1\cap M_2)&=&{\cal B}(M)\cap (\{c,d,e\},2)_==(E,3)_=\cap (\{a,b,c\},2)_\leq\cap  (\{c,d,e\},2)_=\\
&=&(\{a,b\},1)_=\cap (\{a,b,c\},2)_\leq \cap (\{c,d,e\},2)_= = (\{a,b\},1)_=\cap (\{c,d,e\},2)_=
  \end{eqnarray*}
 since $(\{a,b\},1)_\leq \cap (\{c,d,e\},2)_\leq
  \subseteq (E,3)_\leq $. ${\cal B}(M_1\cap M_2)$ is illustrated in the last figure of Figure \ref{fig:m}.

  \begin{figure}[ht]
    \begin{center}
      \leavevmode
\includegraphics[width=33mm]{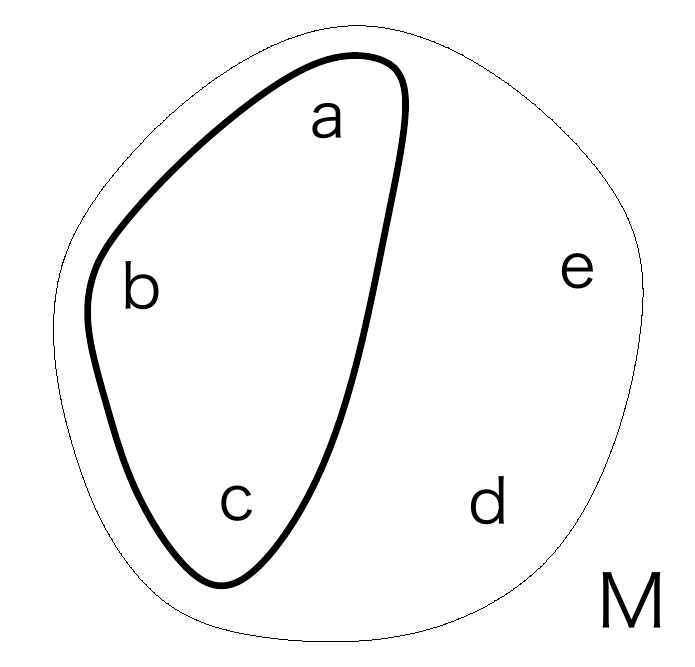}
\quad\includegraphics[width=33mm]{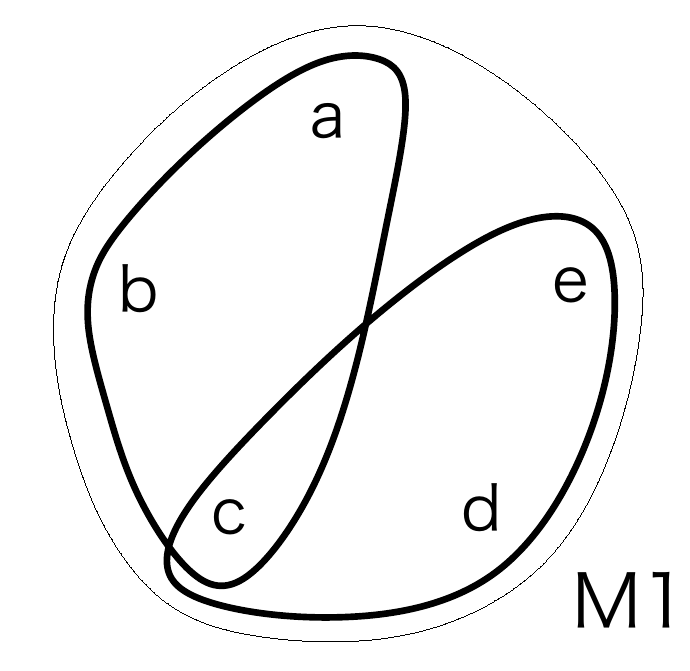}
\quad\includegraphics[width=33mm]{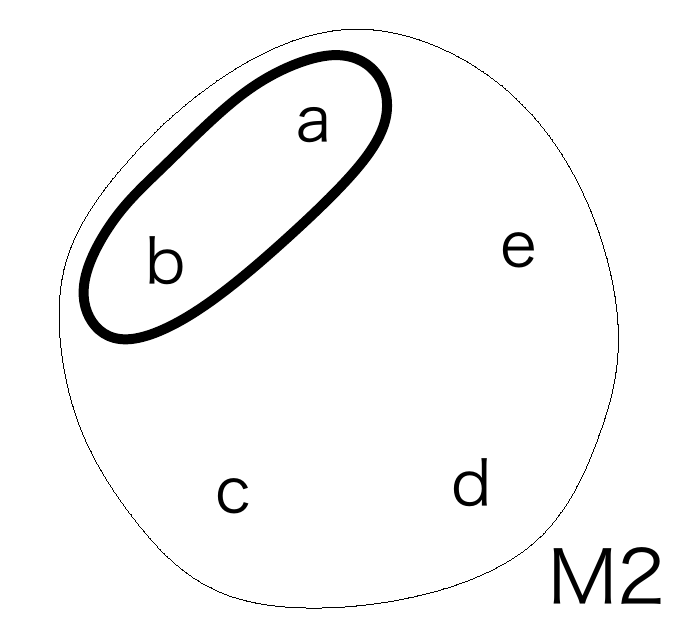}
\quad\includegraphics[width=33mm]{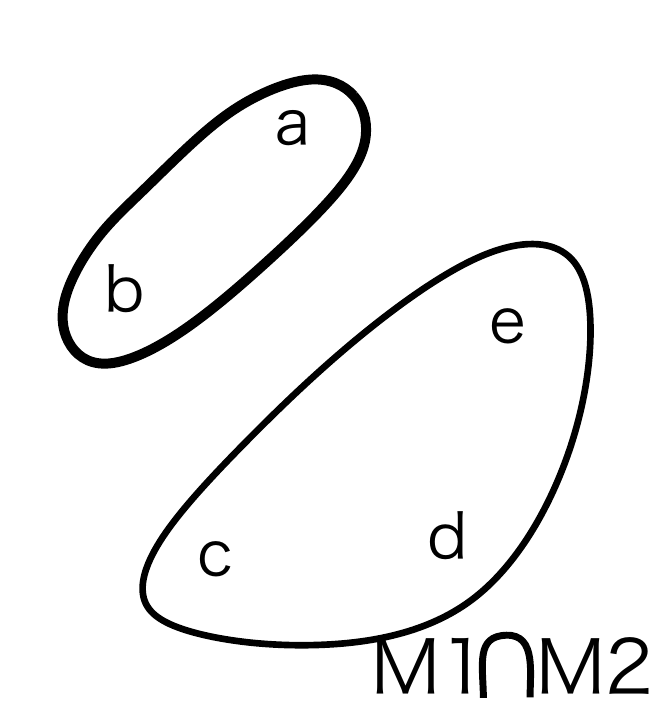} 
    \end{center}
    \caption{Dividing a matroid base system ${\cal B}(M)$ into two matroid base systems ${\cal B}(M_1)$ and ${\cal B}(M_2)$ with intersection ${\cal B}(M_1)\cap {\cal B}(M_2)$.}\label{fig:m}
  \end{figure}

\end{exa}

\section{Matroid base systems of rank 3}\label{sec:rank3}

In this section, we focus on matroid base systems of rank 3 only.
Matroid base systems of rank 3 will be used as important examples in Section \ref{sec:further}.


\subsection{Representation of matroid base systems of rank 3}


\begin{lem}\label{lem:ranktwoes}
For a matroid of rank 3, for any flat $F_1$ of rank 2 and any set $F_2$ of rank 2 with $F_2-F_1\neq \emptyset$, $F_1\cap F_2$ is of rank at most 1. Especially, 
for a matroid of rank 3, for any distinct flats $F_1, F_2$ of rank 2, $F_1\cap F_2$ is of rank at most 1. Moreover, $M$ is simple, $|F_1\cap F_2|\leq 1$.
\end{lem}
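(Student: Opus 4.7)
The plan is to rely almost entirely on submodularity of the rank function together with the defining property of a flat, so the whole argument hinges on two lines. Let $r$ denote the rank function of $M$.

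First, I would apply submodularity to $F_1$ and $F_2$: since $r(F_1)=r(F_2)=2$, we have
\[
r(F_1\cap F_2)+r(F_1\cup F_2)\leq r(F_1)+r(F_2)=4.
\]
Now I would use the hypothesis that $F_1$ is a flat together with $F_2\setminus F_1\neq\emptyset$. The inclusion $F_1\subsetneq F_1\cup F_2$ is strict, so by the definition of a flat, $r(F_1\cup F_2)>r(F_1)=2$. Combined with the global bound $r(F_1\cup F_2)\leq r(E)=3$, this forces $r(F_1\cup F_2)=3$, and the submodular inequality yields $r(F_1\cap F_2)\leq 1$.

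For the ``especially'' clause, I would note that if $F_1$ and $F_2$ are distinct flats both of rank $2$, then neither can be contained in the other: were $F_1\subseteq F_2$, the flatness of $F_1$ applied to $F_1\subseteq F_2$ with $r(F_1)=r(F_2)$ would give $F_1=F_2$, a contradiction, and symmetrically for the reverse inclusion. Hence $F_2\setminus F_1\neq\emptyset$ automatically and the first part applies directly.

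Finally, for the ``moreover'' part, I would invoke the definition of simplicity: $M$ has no loops and no parallel pairs. Thus any subset of rank at most $1$ has cardinality at most $1$ (a rank-$0$ subset must be empty because there are no loops, and a rank-$1$ subset cannot contain two distinct elements because they would be parallel). Applying this to $F_1\cap F_2$, which has rank at most $1$ by the previous step, gives $|F_1\cap F_2|\leq 1$. I do not foresee any genuine obstacle here; the statement is essentially a direct consequence of submodularity plus the closure property of flats, and the three assertions are just successive specializations of the same inequality.
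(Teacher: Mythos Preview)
Your argument is correct and matches the paper's proof essentially line for line: both use that $F_1$ being a flat with $F_2\setminus F_1\neq\emptyset$ forces $r(F_1\cup F_2)=3$, and then apply submodularity to conclude $r(F_1\cap F_2)\leq 1$. The paper leaves the ``especially'' and ``moreover'' clauses implicit, while you spell them out, but the content is identical.
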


\begin{proof}
Since $F_1$ is a flat and $F_2-F_1\neq \emptyset$, $F_1\cup F_2$ has rank 3. Therefore, $F_1\cap F_2$ is of rank at most 1 by submodularity $r(F_1)+r(F_2)\geq r(F_1\cup F_2)+r(F_1\cap F_2)$.
\end{proof}




\begin{lem}\label{lem:facet2comp}
Consider a connected matroid $M$ of rank 3.
Let $A$ be a flat of rank 1.
Then $(A,1)_\leq$ is not a facet-defining inequality of ${\cal B}(M)$ if and only if ${\cal B}(M) \cap (A^c,2)_\leq$ has 
exactly two connected components.
\end{lem}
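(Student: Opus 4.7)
The plan is to rewrite ${\cal B}(M)\cap(A^c,2)_\leq$ as a standard face of the base polytope, apply Lemma~\ref{lem:contract} to split it as a direct sum of matroids, and then compare with Theorem~\ref{thm:facetbase}. The key rank-$3$ input will be that the contraction $M/A$ has rank exactly $2$, which bounds its number of connected components.

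First I would observe that $(A,1)_\leq$ is a valid inequality for ${\cal B}(M)$, so every basis $B$ satisfies $|B\cap A|\leq 1$ and hence $|B\cap A^c|\geq |B|-1 = 2$. Thus every basis automatically lies in $(A^c,2)_\geq$, giving
\[
{\cal B}(M)\cap (A^c,2)_\leq \;=\; {\cal B}(M)\cap (A^c,2)_= \;=\; {\cal B}(M)\cap (A,1)_=,
\]
and by Lemma~\ref{lem:contract} this face equals ${\cal B}(M|A)\oplus{\cal B}(M/A)$. Its connected components are exactly the connected components of $M|A$ together with those of $M/A$.

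Next I would analyze the two factors. Because $M$ is connected of rank $3$ with $|E|\geq 3$, it is loopless, so $M|A$ is loopless of rank $1$; all elements of $A$ are then mutually parallel, making $M|A$ a single parallel class and hence connected (even when $|A|=1$). Since $A$ is a flat, no element of $A^c$ becomes a loop under contraction, so $M/A$ is also loopless, with rank $r(E)-r(A)=2$. The crucial rank-$3$ observation is that a loopless matroid of rank $2$ has at most two connected components, because each component must have rank at least $1$ and the ranks sum to $2$; so $M/A$ has either $1$ or exactly $2$ components.

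Finally I apply Theorem~\ref{thm:facetbase}(c): $(A,1)_\leq$ is a facet-defining inequality iff both $M|A$ and $M/A$ are connected. Since $M|A$ is always connected, this reduces to connectedness of $M/A$. Hence $(A,1)_\leq$ fails to be facet-defining iff $M/A$ is disconnected, and by the rank-$2$ bound this is equivalent to $M/A$ having exactly two connected components, which is exactly the two non-trivial components that split the $A^c$-side of the face ${\cal B}(M)\cap(A^c,2)_\leq$. The only content beyond Theorem~\ref{thm:facetbase} is the rank-$2$ bound on $M/A$, which converts "disconnected" into "exactly two components"; that is where the assumption $\mathrm{rank}(M)=3$ does its work.
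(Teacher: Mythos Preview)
Your proof is correct and follows essentially the same route as the paper, which simply rewrites ${\cal B}(M)\cap(A^c,2)_\leq$ as ${\cal B}(M)\cap(A,1)_=$ and then appeals to Theorem~\ref{thm:facetbase}. You supply the details the paper leaves implicit---the use of Lemma~\ref{lem:contract}, the connectedness of $M|A$, and the rank-$2$ bound on $M/A$ that converts ``disconnected'' into ``exactly two components''---which is precisely where the rank-$3$ hypothesis enters.
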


\begin{proof}
We have ${\cal B}(M) \cap (A^c,2)_\leq={\cal B}(M) \cap (A,1)_\geq={\cal B}(M) \cap (A,1)_=$ since the rank of $A$ is 1. Therefore the statement follows from Theorem \ref{thm:facetbase}.

\end{proof}

\begin{cor}\label{cor:facet2cond}
Consider a connected matroid $M$ of rank 3 on $E$.
Let $A\subseteq E$ be a flat of rank 1.
Then $(A,1)_\leq$ is a facet-defining inequality of ${\cal B}(M)$ if and only if 
there exists no pair of flats $F_1, F_2$ of rank 2 such that 
$A = F_1\cap F_2$ and $F_1\cup F_2 =E$.
\end{cor}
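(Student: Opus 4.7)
The plan is to reduce the statement to a structural question about the contraction $M/A$, and then resolve it via the elementary structure of rank-$2$ matroids. By Theorem \ref{thm:facetbase}, $(A,1)_\leq$ is facet-defining if and only if both $M|A$ and $M/A$ are connected. Since $M$ is connected with $|E|\ge 2$ it is loopless, so every element of the rank-$1$ flat $A$ is a non-loop and any two such elements are parallel; hence $M|A$ is automatically connected. The task therefore reduces to characterising when $M/A$ is connected.

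Next I would unpack $M/A$. It is a rank-$2$ matroid on $E-A$, and it is loopless because any loop of $M/A$ would lie in $\mbox{cl}(A)=A$. The standard correspondence between flats of $M$ containing $A$ and flats of $M/A$ (via $F\mapsto F-A$, shifting rank by $r(A)=1$) puts the rank-$2$ flats of $M$ strictly containing $A$ in bijection with the rank-$1$ flats of $M/A$, i.e., with its parallel classes.

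I would then appeal to the elementary fact that a loopless rank-$2$ matroid is disconnected exactly when it has precisely two parallel classes: in that case it splits as $U_{1,a_1}\oplus U_{1,a_2}$, while three or more classes force connectivity, because any two non-parallel elements then lie together in a $3$-element circuit. Since the parallel classes of $M/A$ partition $E-A$, the existence of a pair of rank-$2$ flats $F_1,F_2$ of $M$ with $F_1\cap F_2=A$ and $F_1\cup F_2=E$ is precisely the statement that $F_1-A$ and $F_2-A$ are disjoint parallel classes of $M/A$ covering $E-A$, equivalently that $M/A$ has exactly two parallel classes, equivalently that $M/A$ is disconnected. Chaining the equivalences yields the corollary.

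The only non-routine ingredient is the rank-$2$ structural claim above, which reduces to the observation that any basis of a loopless rank-$2$ matroid contains at most one element from each parallel class, so a matroid with only two classes must split as a direct sum $U_{1,a_1}\oplus U_{1,a_2}$. Everything else is bookkeeping with the flat--contraction correspondence.
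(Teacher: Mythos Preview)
Your proof is correct. Both your argument and the paper's ultimately reduce the question to the connectedness of $M/A$, but the routes differ in presentation. The paper works directly with the face ${\cal B}(M)\cap(A,1)_=$ via Lemma~\ref{lem:facet2comp}, and in each direction carries out explicit submodularity computations on the rank function of this face to exhibit (or rule out) a splitting into the three components $A$, $F_1-A$, $F_2-A$; in the converse it then verifies by hand that $A\cup A_i$ is a rank-$2$ flat for each component $A_i$ on $A^c$. You instead invoke Theorem~\ref{thm:facetbase} to pass immediately to $M/A$, and then use the standard flat--contraction correspondence together with the elementary structure of loopless rank-$2$ matroids (disconnected $\Leftrightarrow$ exactly two parallel classes) to read off the biconditional in one stroke. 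Your route is more conceptual and sidesteps the ad hoc rank calculations; the paper's route stays entirely inside the linear-inequality framework it has built and does not ask the reader to recall the flat--contraction dictionary or the rank-$2$ classification.
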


\begin{proof}
Assume that $A=F_1\cap F_2$, and $F_1\cup F_2=E$. Let $r$ be the rank function of $M$. 
It suffices to show that the matroid ${\cal B}(M)\cap (A,1)_=$ has three connected components by Lemma \ref{lem:facet2comp}. Let $r'$ be the rank function of ${\cal B}(M)\cap (A,1)_=$. Note that ${\cal B}(M)\cap (A,1)_=={\cal B}(M)\cap (A^c,2)_=$ by Lemma \ref{lem:note}.
Therefore, on ${\cal B}(M)\cap (A^c,2)_=$, $A^c \cap F_1$ has rank 1 because of the submodularity $r'(F_1)+r'(A^c)\geq r'(F_1 \cap A^c)+r'(E)$. Similarly, $A^c\cap F_2$ has rank 1 on ${\cal B}(M)\cap (A^c,2)_=$. Since $r'(A^c \cap F_1)+r'(A^c \cap F_2)=r'(A^c)$,  ${\cal B}(M)\cap (A^c,2)_\leq$ has three connected components $A, A^c\cap F_1$ and $A^c\cap F_2$.

Conversely, assume that $(A,1)_\leq$ is not a facet-defining inequality.
Then ${\cal B}(M)\cap (A^c,2)_\leq$ has exactly two connected components of rank 1 on $A^c$ by Lemma \ref{lem:facet2comp}. Let $A_1$ and $A_2$ be such connected components.
Note that $A$ is also a flat of rank 1 on ${\cal B}(M)\cap (A^c,2)_\leq$.

We first show that $F_1:=A\cup A_1$ and $F_2:=A\cup A_2$ have both rank 2 on $M$.
Note that $r(A_1)=r'(A_1)=1$ and $r(A_2)=r'(A_2)=1$ since $A_1, A_2 \subseteq A^c$.
$F_2=A\cup A_2$ has rank 2 on $M$ because $A$ is a flat of rank 1 and submodularity  $r(F_2)\leq r(A)+r(A_2)$. $F_1=A_2^c$ has rank 2 on $M$ becuase of submodularity $r(A)+r(A_1)\geq r(F_1)$. Note that $r(F_1)>r(A)=1$ since $A$ is a flat on $M$.
The only remaining part to show is that $F_1$ and $F_2$ are flats on $M$.
On the contrary, suppose that there exists $F\subseteq E$ such that $A\cup A_1\subsetneq F$ and $r(F)=2$. Then $F\cap (A_1\cup A_2)$ has rank 1 by submodularity, which contradicts that $A_1$ is a flat of rank 1 on $M$.
\end{proof}

\begin{prop}
For a connected simple matroid, every flat $F$ of rank 2 is facet-defining when $(F,2)_\leq$ is non-trival, i.e. $|F|\geq 3$.
\end{prop}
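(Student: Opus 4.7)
The plan is to apply Theorem \ref{thm:facetbase}, specifically the equivalence (a)$\leftrightarrow$(c): for a connected matroid $M$, the inequality $(F, r(F))_\leq$ is facet-defining for ${\cal B}(M)$ if and only if both $M|F$ and $M/F$ are connected. Thus the whole proof reduces to verifying these two connectedness properties for a rank-$2$ flat $F$ with $|F|\geq 3$ inside a connected simple matroid $M$ of rank $3$.

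For connectedness of $M|F$, I would argue via the pairwise-circuit characterization. The matroid $M|F$ is a simple matroid of rank $2$ on at least three elements. Given any two distinct $x, y \in F$, pick a third element $z \in F\setminus\{x,y\}$ (available because $|F|\geq 3$). Since $M$ is simple, every pair among $\{x,y,z\}$ is independent, so the unique dependent subset of $\{x,y,z\}$ must be the triple itself, giving a $3$-circuit containing $x$ and $y$. Hence any two elements of $F$ lie in a common circuit of $M|F$, which establishes connectedness.

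For connectedness of $M/F$, observe first that $M/F$ has rank $r(E)-r(F)=1$. Because $F$ is a flat of $M$, every element of $E\setminus F$ lies outside $\mathrm{cl}(F)=F$, so no element of $E\setminus F$ is a loop of $M/F$; hence $M/F$ is a loopless rank-$1$ matroid. In such a matroid every pair of elements is a $2$-circuit, so all elements are pairwise connected. (The degenerate case $|E\setminus F|=1$ would force the unique element outside $F$ to be a coloop of $M$, contradicting the connectedness of $M$; so in fact $|E\setminus F|\geq 2$, though the argument for connectedness of $M/F$ does not need this.)

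Combining these two facts with Theorem \ref{thm:facetbase}, $(F,2)_\leq$ is a facet-defining inequality of ${\cal B}(M)$, as claimed. I do not expect a serious obstacle here: the work is entirely in recognizing that the simplicity of $M$ immediately gives $3$-circuits inside any rank-$2$ flat of size at least $3$, while the flat condition is precisely what removes loops from the contraction $M/F$. The only place requiring a moment of care is ruling out pathological sizes of $E\setminus F$, but the connectedness hypothesis on $M$ handles this automatically.
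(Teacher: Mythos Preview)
Your proof is correct and follows essentially the same route as the paper, reducing via Theorem~\ref{thm:facetbase} to checking that $M|F$ and $M/F$ are both connected. Your treatment is in fact more complete: the paper handles $M|F$ by contradiction (a disconnection would yield a rank-$1$ component of size at least $2$, hence parallel elements) and leaves the connectedness of the rank-$1$ contraction $M/F$ implicit, whereas you argue both points directly.
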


\begin{proof}
We first show the restriction $M|F$ of $M$ to $F$ is connected. 
On the contrary, suppose that $M|F$ has two connected components $A_1$ and $A_2$ of rank 1 on $M$. Since $|F|\geq 3$, $|A_1|\geq 2$ or $|A_2|\geq 2$, which contradicts that $M$ is simple.


Therefore, ${\cal B}(M)\cap (F,2)_=$ has exactly two connected components.
Hence  $(F,2)_\leq$ is a facet-defining inequality by Theorem \ref{thm:facetbase}.
\end{proof}


\begin{lem}\label{lem:ff}
Consider a connected matroid $M$ of rank 3 on $E$ with a rank function $r$.
For a facet-defining inequality $(F_2,2)_\leq$ and $F_1\subseteq E$ such that $r(F_1)=1$ and $F_1\cap F_2\neq \emptyset$, we have
$F_1 \subseteq F_2$ and $|F_2-F_1|\geq 2$.
\end{lem}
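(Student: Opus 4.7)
The plan is to establish the two claims separately, using as standing facts that by Theorem~\ref{thm:facetbase} the set $F_2$ is a flat of rank $2$ with $M|F_2$ connected, and that a connected matroid on $|E|\geq 3$ elements (as here, since $r(E)=3$) must be loopless, so $M$ has no loops.

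For the containment $F_1\subseteq F_2$, I exploit the structure of rank-$1$ sets in a loopless matroid: any two elements of $F_1$ are parallel. I fix some $x\in F_1\cap F_2$, which exists by hypothesis. For each $y\in F_1$, the bound $r(\{x,y\})\leq r(F_1)=1$ forces $y\in \mbox{cl}(\{x\})$, and since $F_2$ is a flat containing $x$ we have $\mbox{cl}(\{x\})\subseteq F_2$. Thus $y\in F_2$, giving $F_1\subseteq F_2$.

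For the cardinality bound $|F_2-F_1|\geq 2$, I argue by contradiction on $|F_2-F_1|\leq 1$. The case $|F_2-F_1|=0$ would mean $F_2\subseteq F_1$, forcing $r(F_2)\leq r(F_1)=1$, which contradicts $r(F_2)=2$. The case $|F_2-F_1|=1$ is the crux: writing $\{y\}=F_2-F_1$, the rank identity $r(F_1)+r(\{y\})=1+1=2=r(F_2)$ in $M|F_2$ says exactly that $F_1$ and $\{y\}$ split $M|F_2$ as a direct sum $M|F_1\oplus M|\{y\}$, so $M|F_2$ is disconnected. This contradicts the connectedness of $M|F_2$ supplied by Theorem~\ref{thm:facetbase}.

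The only mildly subtle step I anticipate is the recognition in the second part that $r_{M|F_2}(F_1)+r_{M|F_2}(\{y\})=r_{M|F_2}(F_2)$ is the separator criterion for $M|F_2$; the rest of the argument is routine bookkeeping with closure, submodularity, and the flat/connected characterization of facet-defining inequalities.
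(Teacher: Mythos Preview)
Your proof is correct and follows essentially the same route as the paper. For the inclusion $F_1\subseteq F_2$ you use closure and the flat property of $F_2$, whereas the paper phrases the same observation as a submodularity contradiction; for $|F_2-F_1|\geq 2$ both arguments amount to showing that $M|F_2$ is disconnected (the paper identifies $F_2-F_1$ as a separate connected component of ${\cal B}(M)\cap(F_2,2)_=$, you invoke the rank-additivity separator criterion), and then appeal to Theorem~\ref{thm:facetbase}.
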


\begin{proof}
When $F_2$ and $F_1$ do not satisfy $F_1 \subseteq F_2$, we have $r(F_1\cup F_2)=3$ since $F_2$ is a flat and the rank of $M$ is 3.  Since $F_1\cap F_2\neq \emptyset$ and $M$ is connected, we have $r(F_1\cap F_2)=1$.
Therefore it contradicts submodularity $r(F_1)+r(F_2)\geq r(F_1\cap F_2)+r(F_1\cup F_2)$.
Hence $F_1\subseteq F_2$. 

When $|F_2-F_1|=1$ and $F_1\subseteq F_2$, matroid base system ${\cal B}(M)\cap (F_2,2)_=$ has a connected component $F_2-F_1$. Since $(F_1,1)_\leq \subseteq (F_2,2)_\leq$ by Lemma \ref{lem:facetinc}, $(F_2,2)_\leq$ is not a facet-defining inequality.
\end{proof}

\subsection{2-decomposability of matroids of rank 3}


In this subsection, we consider the 2-decomposability of a matroid base system of rank 3. 

Note that a connected matroid base system of rank 3 is 2-decomposable by $(A,1)_=$ if and only if it is 2-decomposable by $(A^c,2)_=$ since $(E,3)_=\cap (A,1)_= =(E,3)_=\cap(A^c,2)_=$ by Lemma \ref{lem:note}.

\begin{thm}\label{thm:2div}
A connected simple matroid base system ${\cal B}(M)$ of rank 3 is 2-decomposable by $(A,2)_=$
 if and only if it satisfies the following conditions.

(1) $|A^c|\geq 2$,

(2) the rank of $A$ is 3,

(3) for any facet-defining inequality $(F,2)_\leq$ of ${\cal B}(M)$, one of the following is satisfied.
(3-1) $|A\cap F|\leq 1$,
(3-2) $F\subseteq A$,
(3-3) $A\cup F=E$.
\end{thm}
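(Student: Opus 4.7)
The plan is to combine Theorem \ref{thm:two} with a case analysis of the base-exchange axiom. By Theorem \ref{thm:two}, the 2-decomposability by $(A,2)_=$ is equivalent to (i) $N:={\cal B}(M)\cap (A,2)_=$ being a matroid base system and (ii) both ${\cal B}(M)\cap (A,2)_>$ and ${\cal B}(M)\cap (A,2)_<$ being non-empty. The non-emptiness in (ii) translates directly into (1) and (2): since $r(M)=3$, the strict upper side is non-empty iff some base lies inside $A$, i.e. iff $r(A)=3$; while the strict lower side contains a base iff two independent elements of $A^c$ extend to a base of $M$, which for a simple connected rank-$3$ matroid reduces to $|A^c|\geq 2$.

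For the necessity of (3) I argue by contrapositive. Suppose a facet-defining inequality $(F,2)_\leq$ satisfies $|A\cap F|\geq 2$, $F-A\neq\emptyset$, and $E-A-F\neq\emptyset$. Pick $x_1,x_2\in A\cap F$, $y\in F-A$, $z\in E-A-F$, and $a\in A-F$ (the last exists since $r(A)=3>2=r(F)$ forces $A\not\subseteq F$). Both $\{x_1,x_2,z\}$ and $\{a,x_1,y\}$ lie in $N$: the first is a base of $M$ because $z\notin F=\mbox{cl}(\{x_1,x_2\})$, the second because $a\notin F=\mbox{cl}(\{x_1,y\})$, and each intersects $A$ in exactly two elements. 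Together they show $r_N(A)=r_N(F)=r_N(A\cap F)=2$ and $r_N(A\cup F)=3$, violating submodularity, so $N$ is not a matroid base system.

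For sufficiency, assume (1), (2), (3) and verify the base-exchange axiom for $N$. Write $B_i=B_i^A\cup\{y_i\}$ with $B_i^A=B_i\cap A$ of size $2$ and $y_i\in A^c$ for $B_1,B_2\in N$, and take $x\in B_1-B_2$. If $x=y_1$, the only candidate $y\in A^c\cap (B_2-B_1)$ is $y_2$, and failure means $y_2\in F':=\mbox{cl}(B_1^A)$. Then $F'$ is a rank-2 flat with $|F'|\geq 3$, so $(F',2)_\leq$ is facet-defining; (3-1) fails as $B_1^A\subseteq A\cap F'$, (3-2) fails as $y_2\in F'\cap A^c$, so (3-3) forces $A^c\subseteq F'$, placing $y_1\in\mbox{cl}(B_1^A)$ and contradicting that $B_1$ is a base. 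If $x=a_1\in B_1^A$, the set $B_2^A-B_1^A$ is non-empty (else $a_1\in B_2^A\subseteq B_2$), and a candidate $b$ in it fails exactly when $y_1\in F'':=\mbox{cl}(\{a_2,b\})$; condition (3) again forces $A^c\subseteq F''$. If $b$ is the only candidate, then $B_2^A\cap B_1^A=\{a_2\}$ and $B_2=\{b,a_2,y_2\}\subseteq F''$, contradicting $r(B_2)=3$; if two candidates $b_1,b_2$ both fail, the two distinct rank-2 flats so obtained share $A^c\cup\{a_2\}$, a set of size at least $3$ by (1), contradicting Lemma \ref{lem:ranktwoes}.

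The main obstacle is this last step: keeping the sub-cases of the exchange argument organized so that condition (3-3) systematically produces a contradiction, either via Lemma \ref{lem:ranktwoes} or via $B_2$ failing to be a base.
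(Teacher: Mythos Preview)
Your proof is correct, and the necessity argument is close in spirit to the paper's (both exhibit a submodularity violation; you do it for $r_N$ with explicit witnesses in $N$, while the paper does it for the rank function of ${\cal B}(M)\cap(A,2)_\leq$). The sufficiency direction is where the two genuinely diverge. The paper shows that ${\cal B}(M)\cap(A,2)_\leq$ is a matroid base system by checking the submodularity criterion of Theorem~\ref{thm:facetsub} for $r'(A)+r'(F)\geq r'(A\cup F)+r'(A\cap F)$ against each facet-defining rank-$2$ flat $F$, case-splitting on (3-1)/(3-2)/(3-3); the cross-section $N$ is then a face of this matroid base system and hence automatically one itself. You instead verify the base-exchange axiom for $N={\cal B}(M)\cap(A,2)_=$ directly. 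Your route is more elementary and self-contained---no appeal to Theorem~\ref{thm:facetsub}---at the price of the longer hand-organized case analysis you describe; the paper's route is shorter but leans on the representation machinery. One small point to patch in sub-case~2b: you assert that $\mbox{cl}(\{a_2,b_1\})$ and $\mbox{cl}(\{a_2,b_2\})$ are distinct, but this needs a word---if they coincided, that common flat would contain $B_2=\{b_1,b_2,y_2\}$ (since $y_2\in A^c$ and $A^c$ lies in it), giving the same $r(B_2)\leq 2$ contradiction as in sub-case~2a.
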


\begin{proof}
Assume that (1),(2), and (3) are satisfied.
We show that ${\cal B}(M)\cap (A,2)_\leq$ is a matroid base system.
Let $r'$ be the rank function of ${\cal B}(M)\cap (A,2)_\leq$. Note that $r'(X) \leq r(X)$ always holds.
We separate the cases according to (3-1), (3-2) and (3-3) to show submodularity $r'(A)+r'(F)\geq r'(A\cup F)+r'(A\cap F)$ for any facet-defining flat $F$ of rank 2. 
In the case (3-1), $r'(F)=r(F)=2$ since $r'(F-A)=2$. So the submodularity holds in this case. 
In the case (3-2), the submodularity holds trivially. 
To consider the case (3-3), we assume $F\cup A=E$. Since $r'(A)=2$ and $r'(F\cap A)\leq r(F\cap A)$, it suffices to show $r(F\cap A)\leq 1$. When $r(F\cap A)=2$, there exists $B\in {\cal B}(M)$ such that $B\cap (F\cap A)|=2$. Since $|B|=3$ and $F\cup A=E$, $|F\cap B|=3$ or $|A\cap B|=3$, which contradicts $r(F)=2$ and $r(A)=2$. Therefore $r(F\cap A)=1$. We have completed the case (3-3).

By Theorem \ref{thm:facetsub}, ${\cal B}(M)\cap (A,2)_\leq$ is a matroid base system. So ${\cal B}(M)\cap (A,2)_=$ is a matroid base system.

By Condition (1) and the simpleness of the matroid, there exists a base $B\in {\cal B}(M)$ such that $|B\cap A|\leq 1$. Therefore ${\cal B}(M)\cap (A,2)_<$ is not empty. 
By Condition (2), ${\cal B}(M)\cap (A,2)_>$ is not empty.
Therefore, ${\cal B}(M)$ is 2-decomposable by Theorem \ref{thm:two}.

We consider the converse direction.
Assume that there exists a flat $A$ of rank 2 and which satisfies neither of (3-1), (3-2), nor (3-3). Then there exists a facet-defining inequality $(F,2)_\leq$ such that
$|A\cap F|> 1$, $F-A\neq \emptyset$, $A\cup F\neq E$. Since $F-A\neq\emptyset$ and $A$ is a flat of rank 2, we have $r'(A\cup F)=3$. Since $|A\cap F|> 1$ and $A\cup F\neq E$, we have $r'(A\cap F)\geq 1$.
Therefore ${\cal B}(M)\cap (A,2)_\leq$ does not satisfy submodularity $r'(A)+r'(F)\geq r'(A\cup F)+r'(A\cap F)$. 
When the rank of $A$ is 2, ${\cal B}(M)\cap (A,2)_>$ is empty. When $|A^c|\leq 1$, ${\cal B}(M)\cap(A,2)_\leq$ is not connected. Therefore when one of Conditions (1),(2) and (3) is not satisfied, ${\cal B}(M)$ is not 2-decomposable.
\end{proof}

\begin{cor}\label{cor:twopoints}
Consider a connected simple matroid base system ${\cal B}(M)$ of rank 3. If there exist $x,y\in E$ with $x\neq y$ such that $\{F\in {\cal F}^2(M)|x\in F\}=\{F\in {\cal F}^2(M)|y\in F\}$ and the rank of $\{x,y\}^c$ is 3, then it is 2-decomposable by $(\{x,y\},1)_=$ where ${\cal F}^2(M)$ denotes the set of facet-defining flats of rank 2.
\end{cor}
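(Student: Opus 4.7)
The plan is to apply Theorem \ref{thm:2div} directly, after a single bookkeeping step to convert between the two equivalent ways of describing a splitting hyperplane in rank $3$. Specifically, the remark immediately preceding Theorem \ref{thm:2div} says that ${\cal B}(M)$ is 2-decomposable by $(\{x,y\},1)_=$ if and only if it is 2-decomposable by $(\{x,y\}^c,2)_=$, so set $A:=\{x,y\}^c$ and reduce the corollary to checking Conditions (1), (2), and (3) of Theorem \ref{thm:2div} for this $A$.

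Conditions (1) and (2) are read off the hypotheses at once: $|A^c|=|\{x,y\}|=2$, and $r(A)=r(\{x,y\}^c)=3$ is precisely what is assumed. So the content of the proof is Condition (3). Let $(F,2)_\leq$ be an arbitrary facet-defining inequality, i.e.\ $F\in {\cal F}^2(M)$. The assumption $\{F\in {\cal F}^2(M)\mid x\in F\}=\{F\in {\cal F}^2(M)\mid y\in F\}$ says exactly that $x\in F\Longleftrightarrow y\in F$, giving two cases. If $x,y\in F$, then $\{x,y\}\subseteq F$, and so $A\cup F\supseteq \{x,y\}^c\cup\{x,y\}=E$, verifying (3-3). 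If $x,y\notin F$, then $F\cap\{x,y\}=\emptyset$, so $F\subseteq \{x,y\}^c=A$, verifying (3-2). Case (3-1) is never needed.

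There is no real obstacle: once the conversion $(\{x,y\},1)_=\leftrightarrow(\{x,y\}^c,2)_=$ via Lemma \ref{lem:note} is noted, the hypothesis on ${\cal F}^2(M)$ is essentially tailored to force every facet-defining flat of rank $2$ into case (3-2) or (3-3) of Theorem \ref{thm:2div}. The only point that requires a moment's thought is confirming that the corollary's statement really concerns the hyperplane $(\{x,y\},1)_=$ while the theorem applies to $(A,2)_=$; after that identification the verification is purely set-theoretic.
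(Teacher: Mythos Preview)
Your proof is correct and follows essentially the same approach as the paper: both set $A=\{x,y\}^c$, invoke Theorem \ref{thm:2div}, verify Conditions (1) and (2) directly from the hypotheses, and then use the assumption $|F\cap\{x,y\}|\neq 1$ to place every facet-defining flat of rank $2$ into case (3-2) or (3-3). The only cosmetic difference is that you make the conversion $(\{x,y\},1)_=\leftrightarrow(\{x,y\}^c,2)_=$ explicit via Lemma \ref{lem:note}, whereas the paper relies on the remark preceding Theorem \ref{thm:2div}.
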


\begin{proof}
It suffices to show that $\{x,y\}^c$ satisfies Conditions (1), (2), and (3) of Theorem \ref{thm:2div}.
Condition (1) follows from $|\{x,y\}|=2$. Condition (2) follows from the assumption $r(\{x,y\}^c)=3$. We check the Condition (3). Note that $|F\cap\{x,y\}|\neq 1$ holds for a facet-defining flat $F$ of rank 2 by the assumption. Condition (3-3) $F\cup \{x,y\}^c=E$ holds when a facet-defining flat $F$ of rank 2 satisfies $F\supseteq \{x,y\}$. Condition (3-2) holds when a facet-defining flat $F$ of rank 2 satisfies $F\cap \{x,y\}=\emptyset$.
\end{proof}

\begin{exa}
Let $E=\{a,b,c,d,e,f\}$. Consider the following matroid base system.
$${\cal B}(M)=(E,3)_=\cap (\{a,b,c\},2)_\leq\cap (\{c,d,e,f\},2)_\leq.$$
Since $\{e,f\}$ satisfies the conditions in Corollary \ref{cor:twopoints}, ${\cal B}(M)$ is 2-decomposable by $(\{e,f\},1)_=$ into ${\cal B}(M_1)$ and ${\cal B}(M_2)$.
$${\cal B}(M_1)=(E,3)_=\cap (\{a,b,c\},2)_\leq\cap (\{c,d,e,f\},2)_\leq \cap (\{e,f\},1)_\leq,$$
$${\cal B}(M_2)=(E,3)_=\cap (\{c,d,e,f\},2)_\leq \cap (\{a,b,c,d\},2)_\leq.$$
\end{exa}

\begin{cor}\label{cor:co}
Consider a connected simple matroid base system of rank 3 with $|E|\geq 5$. Define the graph on $E$ so that $\{x,y\}$ is an edge of the graph if and only if there exists no facet-defining flat of rank 2 including $\{x,y\}$. If the graph has a 3-cycle on $\{x,y,z\}$, the matroid base system is 2-decomposable by $(\{x,y,z\},2)_=$.
\end{cor}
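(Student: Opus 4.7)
The plan is to apply Theorem \ref{thm:2div} with $A := \{x,y,z\}$ and to verify its three conditions in turn. Condition (1) is immediate: $|A^c|=|E|-3\geq 5-3=2$ by the hypothesis $|E|\geq 5$.

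For condition (2), I would show $r(\{x,y,z\})=3$ by contradiction. Assume $r(\{x,y,z\})<3$. Since $M$ is simple, any two distinct elements are independent, hence $r(\{x,y,z\})\geq 2$, so $r(\{x,y,z\})=2$. Then $\mbox{cl}(\{x,y,z\})$ is a flat of rank $2$ of cardinality at least $3$, and by the earlier proposition that every rank-$2$ flat of size at least $3$ in a connected simple matroid is facet-defining, $(\mbox{cl}(\{x,y,z\}),2)_\leq$ is a facet-defining inequality containing the pair $\{x,y\}$. This contradicts the hypothesis that $\{x,y\}$ is an edge of the graph, which by definition means no facet-defining flat of rank $2$ contains $\{x,y\}$.

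For condition (3), I would consider an arbitrary facet-defining inequality $(F,2)_\leq$ of ${\cal B}(M)$. Because $\{x,y\}$, $\{y,z\}$, and $\{x,z\}$ are all edges of the graph, $F$ cannot contain any of these three pairs, so $|F\cap\{x,y,z\}|\leq 1$, which is exactly alternative (3-1) of Theorem \ref{thm:2div}. Thus (3) holds for every facet-defining rank-$2$ flat, and Theorem \ref{thm:2div} yields the claimed 2-decomposition by $(\{x,y,z\},2)_=$. The only mildly delicate step is the reduction in (2), which relies on invoking the rank-$2$ facet-defining proposition; the other two conditions are direct consequences of the 3-cycle hypothesis in the defining graph.
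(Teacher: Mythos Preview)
Your proof is correct and follows the same route as the paper: apply Theorem \ref{thm:2div} to $A=\{x,y,z\}$, with (1) coming from $|E|\geq 5$ and (3-1) from the 3-cycle hypothesis. Your treatment of condition (2) is in fact more careful than the paper's, which simply asserts that (2) and (3-1) both follow from ``$\{x,y,z\}$ intersects any facet-defining flat of rank 2 in at most 1 element''; you make explicit the needed step that $\mbox{cl}(\{x,y,z\})$ would be facet-defining via the proposition on rank-2 flats of size at least 3.
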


\begin{proof}
Let $\{x,y,z\}$ have such a 3-cycle. We check that $A=\{x,y,z\}$ satisfies the conditions in Theorem \ref{thm:2div}. Condition (1) is satisfied because of $|E| \geq 5$. Conditions (2) and (3-1) follow from that $\{x,y,z\}$ intersects any facet-defining flat of rank 2 in at most 1 element.
\end{proof}

\begin{exa}
Let $E=\{a,b,c,d,e,f\}$. Consider the following matroid base system.
$${\cal B}(M)=(E,3)_=\cap (\{a,b,c\},2)_\leq\cap (\{c,d,e\},2)_\leq\cap(\{e,f,a\},2)_\leq.$$
Since each of $\{b,d\},\{d,f\},\{f,b\}$ is contained in no facet-defining flat of rank 2, $\{b,d,f\}$ contains a 3-cycle satisfying the conditions in Corollary \ref{cor:co}.
Therefore the matroid base system is 2-decomposable by $(\{b,d,f\},2)_=$.
$${\cal B}(M_1)=(E,3)_=\cap (\{a,b,c\},2)_\leq\cap (\{c,d,e\},2)_\leq\cap(\{e,f,a\},2)_\leq\cap (\{b,d,f\},2)_\leq.$$
$${\cal B}(M_2)=(E,3)_=\cap (\{a,c\},1)_\leq.$$
\end{exa}

\begin{cor}\label{cor:notall}
For a connected simple matroid base system of rank 3, if there exist a flat $A$ of rank 2 and $x\in E-\bigcup\{F\in {\cal F}^2|A\cap F\neq \emptyset\}-A$ and $|E-A|\geq 3$, the matroid base system is 2-decomposable by $(A\cup x,2)_=$, where ${\cal F}^2$ is the set of the facet-defining flats of rank 2.
\end{cor}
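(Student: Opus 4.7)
The plan is to apply Theorem \ref{thm:2div} with $A' := A \cup \{x\}$, checking Conditions (1), (2), and (3) of that theorem in turn.

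Condition (1) is immediate: since $x \notin A$ we have $|A'| = |A| + 1$, so $|E - A'| = |E - A| - 1 \geq 2$. Condition (2) follows from the fact that $A$ is a flat of rank $2$, so $\mathrm{cl}(A) = A$; hence $x \notin A$ gives $r(A \cup \{x\}) > r(A) = 2$, and since $M$ has rank $3$ this forces $r(A') = 3$.

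The real content is Condition (3). Let $F$ be an arbitrary facet-defining flat of rank $2$ of ${\cal B}(M)$; I plan to split into three cases according to how $F$ meets $A$. If $F \cap A = \emptyset$, then $F \cap A' = F \cap \{x\}$, which has at most one element, so (3-1) holds. If $F = A$, then $F \subseteq A \subseteq A'$, so (3-2) holds. In the remaining case $F \cap A \neq \emptyset$ and $F \neq A$, the hypothesis that $x \in E - \bigcup\{F'\in {\cal F}^2 \mid A \cap F' \neq \emptyset\}$ forces $x \notin F$, since $F$ itself belongs to this union. Moreover $F$ and $A$ are then distinct rank-$2$ flats in a simple matroid, so Lemma \ref{lem:ranktwoes} gives $|F \cap A| \leq 1$, whence $|F \cap A'| = |F \cap A| \leq 1$ and (3-1) holds again.

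The step I expect to require the most care is this last subcase, because we need both that $x$ lies outside every rank-$2$ facet-defining flat meeting $A$ (which rules $x$ out of $F$) and that simplicity together with Lemma \ref{lem:ranktwoes} bounds $|F \cap A|$ by $1$. Once Conditions (1), (2), (3) of Theorem \ref{thm:2div} are verified, the conclusion that ${\cal B}(M)$ is 2-decomposable by $(A \cup \{x\}, 2)_=$ is immediate.
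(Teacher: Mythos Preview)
Your proof is correct and follows the same approach as the paper: verify Conditions (1), (2), (3) of Theorem~\ref{thm:2div} for $A' = A \cup \{x\}$. Your case split for Condition (3) is in fact slightly more careful than the paper's, which tacitly folds the case $F = A$ into the general argument; your explicit handling of $F = A$ via (3-2) is a clean touch.
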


\begin{proof}
We have only to show Conditions (1), (2), and (3) in Theorem \ref{thm:2div}.
Condition (1) follows from $|E-A|\geq 3$.
Condition (2) $r(A\cup x)=3$ follows from that $A$ is a flat of rank 2 and $x\notin A$.
For every facet-defining flat $F$ of rank 2, $|F\cap (A\cup x)|= |F\cap A|$ when $A\cap F\neq \emptyset$ since $x\notin F$. $r(F\cap A)\leq 1$ by the submodularity. $|F\cap A|\leq 1$ since a matroid is simple. When $A\cap F=\emptyset$, $|F\cap (A\cup x)|\leq 1$ holds.  Therefore we have (3-1) $|F\cap (A\cup x)|\leq 1$.
\end{proof}

\begin{exa}\label{exa:notall}
Let $E=\{a,b,c,d,e,f,g,h\}$.
The following example is a matroid base system which is 2-decomposable by Corollary \ref{cor:notall}.
$${\cal B}(M)=(E,3)_=\cap (abc,2)_\leq\cap (ade,2)_\leq\cap(afg,2)_\leq\cap(bdf,2)_\leq\cap (ceh,2)_\leq\cap (bgh,2)_\leq\cap (cfh,2)_\leq.$$
This matroid is depicted in Figure \ref{fig:notall}.
Corollary \ref{cor:co} cannot apply to this matroid base system. However, since $\{a,d,e\}$ and $h$ satisfy the conditions in Corollary \ref{cor:notall}, it is 2-decomposable by $(\{a,d,e,h\},2)_=$.
\begin{figure}
\begin{center}
\includegraphics{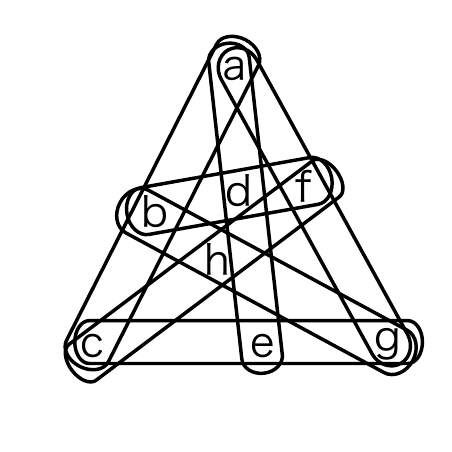}
\caption{The flats in the matroid in Example \ref{exa:notall}}\label{fig:notall}
\end{center}
\end{figure}
\end{exa}

The 2-decomposability of a connected simple matroid base system of small size is always decided by Corollaries \ref{cor:twopoints}, \ref{cor:co} and \ref{cor:notall}.
However, that of some connected simple matroid base system cannot be decided by the above corollaries as in the next example.

\begin{exa}
Let $E=\{a,b,c,d,e,f,g,h,i,j,k,l\}$.
Consider the following matroid base system on $E$ whose facet-defining flats of rank 2 are
$$adgi,bcei,abhj,cdfj,acl,bdl,egl,fhl,ijl,aek,bfk,cgk,dhk.$$

Since the matroid has too many facet-defining flats, we use two figures to represent them. 
The first figure in Figure \ref{fig:2divisible} indicates the facet-defining flats of rank 2 containing $i$ or $j$. The second figure indicates the facet-defining flats of rank 2 containing $k$ or $l$. 
The matroid has facet-defining flat $\{i,j,l\}$ other than those depicted in the figures.

There exists no pair of elements as in Corollary \ref{cor:twopoints}. There exists no 3-cycle as in Corollary \ref{cor:co}. There exists no facet-defining flat as in Corollary \ref{cor:notall}. However, by Theorem \ref{thm:2div}, it is 2-decomposable by $(\{e,f,g,h,l\},2)_=$.
\begin{figure}
\begin{center}
\includegraphics{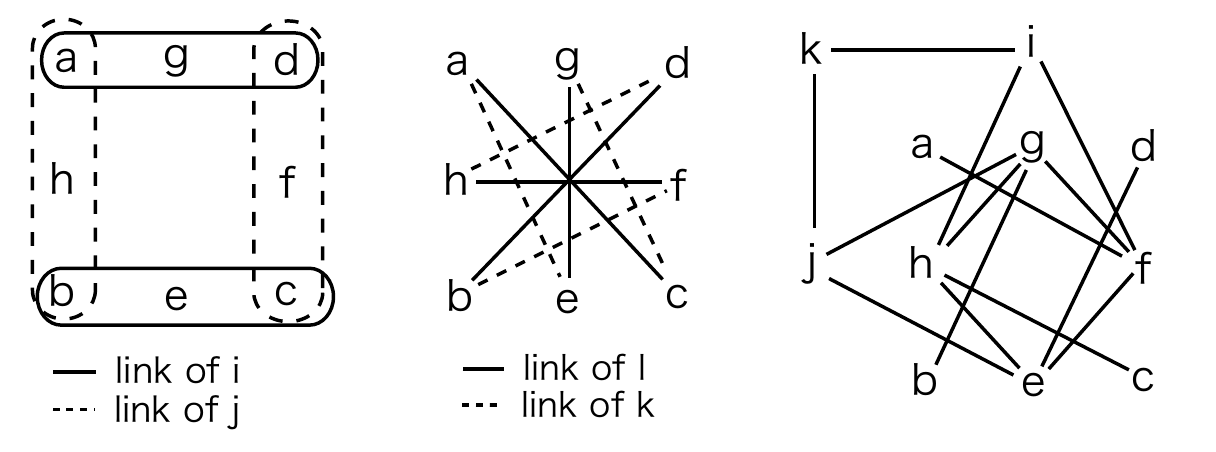}
\caption{The first two figures show the facet-defining flats. The last figure shows the graph defined in Corollary \ref{cor:co}}\label{fig:2divisible}
\end{center}
\end{figure}

\end{exa}



\subsection{Included matroids and 3-partitions}

Since we handle two or more matroids simultaneously, we name them to distinguish.
We call a connected simple matroid base system that we consider as a whole matroid an  {\it original matroid base system}.
In this section, any original matroid base system is assumed to be of rank 3.
A facet-defining flat of the original matroid base system is called an {\it original facet-defining flat}.

We fix an original matroid base system of rank 3. 
A connected matroid base system that is properly included in the original matroid base system is called an {\it included matroid base system}. Note that an included matroid base system may not be simple. A facet-defining flat of an included matroid base system that is not an original facet is called a {\it non-original facet-defining flat}.
For an original matroid base system ${\cal B}(M)$ with its rank function $r$, and its included matroid base system ${\cal B}(M')$ with its rank function $r'$, $r'(A)\leq r(A)$ holds for any $A\subseteq E$ by Lemma \ref{lem:r1r2}.

It is known that any binary matroid base system cannot include any included matroid base system as shown in Theorem \ref{thm:lucas}.

\begin{exa}\label{exa:seven}
The next example is a matroid base system on $E$ of size 7 that is neither binary nor 2-decomposable but not minimal with respect to the weak-map order.
Let $E=\{a,b,c,d,e,f,g\}$.
$${\cal B}(M)=(E,3)_=\cap (\{a,b,c\},2)_\leq\cap (\{a,d,e\},2)_\leq\cap(\{a,f,g\},2)_\leq\cap(\{b,d,f\},2)_\leq\cap (\{c,e,g\},2)_\leq.$$
Since any two facet-defining flats of rank 2 intersect in at most one element.
It is not binary since $M/\{c\}\backslash \{b,g\}=U_{2,4}$.
However, it is not 2-decomposable by Theorem \ref{thm:2div}.
$${\cal B}(M_1)=(E,3)_=\cap (\{a,b,c,d,e\},2)_\leq\cap (\{a,f,g\},2)_\leq\cap(\{b,d\},1)_\leq\cap(\{c,e\},1)_\leq$$
is an included matroid base system of $M$, shown in Figure \ref{fig:seven}. 
For example, $(\{a,b,c,d,e\},2)_\leq$ is a non-original facet-defining flat, and $(\{a,f,g\},2)_\leq$ is an original facet-defining flat.
\begin{figure}
\begin{center}
\includegraphics{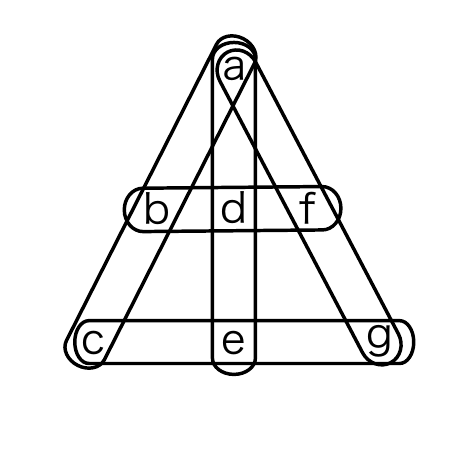}
\includegraphics{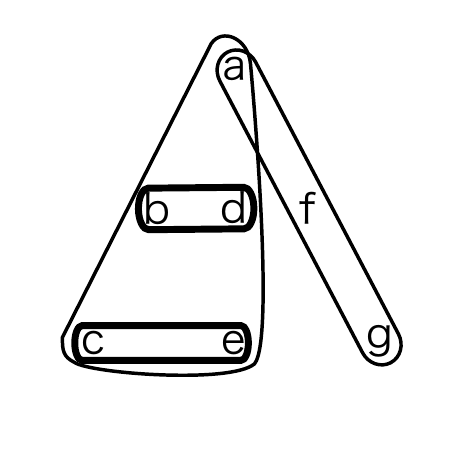}
\caption{The original matroid $M$ and an included matroid $M_1$ in Example \ref{exa:seven}}\label{fig:seven}
\end{center}
\end{figure}
\end{exa}

\begin{df}
For a connected simple matroid base system ${\cal B}(M)$ of rank 3 with its rank function $r$,
a partition $\{A_1, A_2, A_3\}$ of $E$ is called a {\it 3-partition} in $M$ if the following conditions are satisfied.
\begin{itemize}
\item $|A_1|, |A_2|, |A_3|\geq 2$,
\item there exists no facet-defining flat of rank 2 that intersects all of $A_1,A_2,A_3$,
\item $r(A_1\cup A_2)=r(A_2\cup A_3)=r(A_3\cup A_1)=3$.
\end{itemize}
\end{df}

A 3-partition $\{A_1,A_2,A_3\}$ corresponds to a matroid $(A_1,1)_=\cap (A_2,1)_=\cap (A_3,1)_=$.
When the matroid corresponding to a 3-partition in $M$ is a ridge of its included matroid $M'$, we say that ${\cal B}(M')$ has a 3-partition in $M$.

\begin{lem}\label{lem:2facets}
Consider a connected simple matroid base system ${\cal B}(M)$ of rank 3 which is not 2-decomposable and has an included matroid base system ${\cal B}(M')$.
Then ${\cal B}(M')$ has a 3-partition $\{A_1, A_2, A_3\}$ in $M$ such that $(A_1, 1)_\leq$ and $(A_1\cup A_2, 2)_\leq$ are non-original facet-defining flats.
\end{lem}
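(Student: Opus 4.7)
The plan is to build the 3-partition explicitly from the discrepancy between ${\cal B}(M')$ and ${\cal B}(M)$, and then to use the non-2-decomposability hypothesis to verify the three defining properties of a 3-partition in $M$.

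First I would produce a non-original rank-1 facet-defining flat of $M'$. Since ${\cal B}(M')\subsetneq {\cal B}(M)$ and both base systems have rank $3$, any $B_{0}\in {\cal B}(M)\setminus {\cal B}(M')$ satisfies $|B_{0}|=3$ and is dependent in $M'$, hence contains an $M'$-circuit $C$ with $|C|\in\{2,3\}$. The clean case is $|C|=2$: then $C=\{x,y\}$ is a pair of parallel elements of $M'$, and $A_{1}:=\mbox{cl}_{M'}(\{x,y\})$ is a rank-$1$ flat of $M'$ with $|A_{1}|\geq 2$; since $M$ is simple, $A_{1}$ is automatically non-original. The subtler case is $|C|=3$; here I would argue that the closure $F=\mbox{cl}_{M'}(C)$ is a rank-$2$ flat of $M'$ whose restriction $M'|F$ is a rank-$2$ connected matroid on $\geq 3$ elements and therefore contains a rank-$1$ parallel class (i.e.\ a rank-$1$ non-original flat of $M'$), reducing to the first case.

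Next I would extend $A_{1}$ to a ridge of $M'$ by choosing $A_{2}$. By Lemma \ref{lem:contract}, the face ${\cal B}(M')\cap (A_{1},1)_{=}$ splits as $M'|A_{1}\oplus M'/A_{1}$, where $M'/A_{1}$ is a rank-$2$ matroid on $E\setminus A_{1}$. If $M'/A_{1}$ is disconnected it has exactly two rank-$1$ components $A_{2},A_{3}$, and the $3$-partition $\{A_{1},A_{2},A_{3}\}$ appears directly; the facet ${\cal B}(M')\cap(A_{1}\cup A_{2},2)_{=}$ has connected components $A_{1}\cup A_{2}$ and $A_{3}$ by Theorem \ref{thm:facetbase}, so both $(A_{1},1)_{\leq}$ and $(A_{1}\cup A_{2},2)_{\leq}$ are facet-defining inequalities of $M'$. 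If $M'/A_{1}$ is connected, then $(A_{1},1)_{\leq}$ itself is facet-defining, and I pick a facet-defining flat of the rank-$2$ matroid $M'/A_{1}$ whose preimage in $E$ furnishes $A_{2}$; the pair of inequalities $(A_{1},1)_{\leq},\,(A_{1}\cup A_{2},2)_{\leq}$ again intersect ${\cal B}(M')$ in a ridge whose three connected components form the desired partition.

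Finally I would verify that $\{A_{1},A_{2},A_{3}\}$ is a $3$-partition \emph{in $M$}. Non-originality of $(A_{1},1)_{\leq}$ is immediate from $|A_{1}|\geq 2$ together with simplicity of $M$; non-originality of $(A_{1}\cup A_{2},2)_{\leq}$ will follow from $r_{M}(A_{1}\cup A_{2})=3$, which I prove together with the bound $r_{M}(A_{i}\cup A_{j})=3$ for each pair (if some $r_{M}(A_{i}\cup A_{j})\leq 2$, this would force the trivial facet structure that already contradicts the assumption that $M'$ is a proper included submatroid). The bound $|A_{i}|\geq 2$ for $i=2,3$ is handled by reducing to small cases via Corollaries \ref{cor:twopoints} and \ref{cor:notall}: if say $|A_{3}|=1$, then the singleton $A_{3}$ together with a parallel pair from $A_{1}$ would realize the hypotheses of Corollary \ref{cor:twopoints} and $M$ would be $2$-decomposable, contradicting the hypothesis.

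The main obstacle, and the critical use of non-$2$-decomposability, is the remaining $3$-partition axiom: no rank-$2$ facet-defining flat of $M$ meets all three parts. Suppose for contradiction some facet-defining flat $F$ of $M$ of rank $2$ has $F\cap A_{i}\neq\emptyset$ for $i=1,2,3$. Using Lemma \ref{lem:ff} and Lemma \ref{lem:ranktwoes}, which constrain how a rank-$2$ flat of $M$ can sit relative to $A_{1},A_{2},A_{3}$, I would build a candidate hyperplane $(A,2)_{=}$ (for instance from $A=A_{1}\cup A_{2}$ together with judicious modifications near $F$) satisfying the three conditions (1), (2), (3) of Theorem \ref{thm:2div}, thereby exhibiting a $2$-decomposition of $M$ and reaching a contradiction. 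Threading the submodularity arguments so that each condition (3-1), (3-2), or (3-3) holds for every original rank-$2$ facet-defining flat is the technical heart of the proof.
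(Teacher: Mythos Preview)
Your first step contains a genuine gap. In the case $|C|=3$ you assert that the restriction $M'|F$ (with $F=\mbox{cl}_{M'}(C)$) ``is a rank-$2$ connected matroid on $\geq 3$ elements and therefore contains a rank-$1$ parallel class.'' That implication is false: $U_{2,3}$ is a rank-$2$ connected matroid on three elements with no parallel pair, and more generally any $U_{2,n}$ works. So you cannot reduce the $|C|=3$ case to the $|C|=2$ case by this route, and you may fail to produce a non-original rank-$1$ flat $A_{1}$ at all. Indeed, without the non-$2$-decomposability hypothesis such an $A_{1}$ need not exist (take $M=U_{3,5}$ and $M'={\cal B}(M)\cap(A,2)_{\leq}$ for a $3$-element set $A$: then $M'$ is a connected simple included matroid with no non-original rank-$1$ flat). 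You never invoke non-$2$-decomposability in Step~1, so the step cannot succeed as written.

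The paper's argument proceeds in the opposite order and uses non-$2$-decomposability twice, precisely to avoid this trap. First it shows that $M'$ must have a non-original facet-defining inequality $(F_{1},2)_{\leq}$ of rank $2$: if every non-original facet had rank $1$, a single such $(A_{1},1)_{\leq}$ would give ${\cal B}(M)\cap(A_{1},1)_{\leq}$ a matroid base system via Theorem~\ref{thm:facetsub}, hence a $2$-decomposition by Theorem~\ref{thm:two}. Second, it shows that among the remaining non-original facets of $M'$ there must be one of the form $(A_{1},1)_{\leq}$ with $A_{1}\subseteq F_{1}$; otherwise every other non-original facet is of one of three harmless types relative to $F_{1}$, and one checks directly that ${\cal B}(M)\cap(F_{1},2)_{\leq}$ is then a matroid base system, again contradicting non-$2$-decomposability. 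With $A_{1}\subseteq F_{1}$ in hand one sets $A_{2}=F_{1}\setminus A_{1}$ and $A_{3}=F_{1}^{c}$. The final axiom---no original rank-$2$ facet meets all three parts---is not handled by building a hyperplane and appealing to Theorem~\ref{thm:2div} as you suggest, but by the much simpler observation that $(A_{1},1)_{=}\cap(A_{2},1)_{=}\cap(A_{3},1)_{=}$ is a ridge of ${\cal B}(M')$ and hence loopless; any original rank-$2$ flat meeting all three $A_{i}$ would force a loop there. Your proposed contradiction via Theorem~\ref{thm:2div} (``judicious modifications near $F$'') is left entirely unspecified, and your use of Corollaries~\ref{cor:twopoints} and~\ref{cor:notall} to force $|A_{i}|\geq 2$ is also not justified: nothing you have established implies that two elements of $A_{1}$ lie in exactly the same original rank-$2$ flats.
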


\begin{proof}
Let $r'$ be the rank function of $M'$.
Since the matroid base system ${\cal B}(M)$ is not 2-decomposable,
${\cal B}(M')$ has at least two non-original facet-defining flats.

Consider the case where every non-original facet-defining flat of ${\cal B}(M')$ has rank 1. Let $(A_1,1)_\leq$ be a non-original facet-defining flat of $M'$.
Then any other non-original facet-defining inequality $(A,1)_\leq$ of ${\cal B}(M')$ is disjoint from $A_1$ on $E$ since $M'$ is connected. Then ${\cal B}(M)\cap (A_1,1)_\leq$ is a matroid base system since ${\cal I}(M)\cap (A_1,1)_\leq$ is a matroid independence system by Theorem \ref{thm:facetsub}.
By Theorem \ref{thm:two}, ${\cal B}(M)$ is 2-decomposable, a contradiction.

So we can assume that ${\cal B}(M')$ has a non-original facet-defining inequality $(F_1,2)_\leq$.

Since the matroid base system ${\cal B}(M)$ cannot be 2-decomposable by non-original facet-defining equality $(F_1,2)_=$, ${\cal B}(M)\cap (F_1,2)_\leq$ is not a matroid base system. Therefore ${\cal B}(M')$ has another non-original facet-defining inequality than $(F_1,2)_\leq$. 

We show that there exists a non-original facet-defining inequality $(A_1,1)_\leq$ such that $A_1\subseteq F$. On the contrary, suppose that there exists no such inequality.
If any other non-original facet-defining inequality than $(F_1,2)_\leq$ is either of the following three types (a), (b) and (c), it is easy to show that the submodularity $r''(F_1)+r''(F_2)\geq r''(F_1\cap F_2)+r''(F_1\cup F_2)$ of ${\cal B}(M)\cap (F_1,2)_\leq$ holds where $r''$ is the rank function of ${\cal B}(M)\cap (F_1,2)_\leq$.

(a) non-original facet-defining inequality $(A,1)_\leq$ with $A\cap F_1=\emptyset$.

(b) non-original facet-defining inequality $(F,2)_\leq$ with $|F\cap F_1|\leq 1$.

(c) non-original facet-defining inequality $(F,2)_\leq$ with $F\cup F_1 =E$.

Then ${\cal B}(M)$ is 2-decomposable by $(F_1,2)_=$ at that time by Theorem \ref{thm:two}. 
Therefore there exists a flat $F_1$ of rank 2 such that $|F\cap F_1|\geq 2$ and $F\cup F_1\neq E$. Then $F\cap F_1$ is a flat of rank 1 by Corollary \ref{cor:facet2cond}. By letting $A_1=F \cap F_1$, $(A_1,1)_\leq$ is a non-original facet-defining inequality on ${\cal B}(M')$ such that $A_1\subseteq F_1$ and $|A_1|\geq 2$, a contradiction. 

We show that $\{A_1, A_2, A_3\}$ is a 3-partition in $M$ where $A_2=F_1-A_1$ and $A_3=F_1^c$.

Since $(A_1 \cup A_2,2)_\leq$ is a non-original facet-defining inequality, ${\cal B}(M)\cap (A_1\cup A_2,2)_>$ is non-empty. Therefore $A_1\cup A_2$ has rank 3 on the matroid base system ${\cal B}(M)$.
If $A_1\cup A_3$ has rank 2 on $M$, it contradicts Corollary \ref{cor:facet2cond} since $(A_1,1)_\leq$ is a non-original facet-defining inequality and $(A_1\cup A_2)\cup (A_1\cup A_3)=E$.
So $r(A_1\cup A_3)=3$. Since the rank of $A_1$ is 1 and the included matroid $M'$ is connected,
 $A_2\cup A_3$ has rank 3 on the matroid $M$.

Since $(A_1,1)_\leq$ is a non-original facet-defining inequality, we have $|A_1|\geq 2$.
Since $(A_1\cup A_2,2)_\leq$ is a non-original facet-defining inequality, $|A_2|\geq 2$.
Since $|A_3|=1$ contradicts the connectivity of the matroid $M$, we have $|A_3|\geq 2$.

${\cal B}(M)\cap (A_1,1)_=\cap (A_1\cup A_2,2)_= = {\cal B}(M)\cap (A_1,1)_=\cap (A_2,1)_=\cap (A_3,1)_=$ includes a matroid base system of rank 3 as a ridge of ${\cal B}(M')$. There exists no loopless matroid included in ${\cal B}(M)\cap (A_1,1)_=\cap (A_2,1)_=\cap (A_3,1)_=$ for any facet-defining flat $F$ of rank 2 that intersects all of $A_1,A_2,A_3$. Note that the ridge of $M'$ defined by $(A_1,1)_=$ and $(A_1\cup A_2,2)_=$ cannot have any loop by considering its dimension. Therefore there exists no original facet-defining flat of rank 2 that intersects all of $A_1,A_2,A_3$.
Therefore $\{A_1,A_2,A_3\}$ is a 3-partition in $M$.
\end{proof}

By this lemma, a connected simple matroid of rank 3 whose base system has an included matroid base system has a 3-partition $\{A_1,A_2,A_3\}$ in $M$ and therefore the non-connected matroid base system $(A_1,1)_=\cap (A_2,1)_=\cap (A_3,1)_=$ is included in the original matroid base system, which is a ridge of the included matroid base system.


Let $e(k)$ be $k \choose 2$, which is the number of edges of the complete graph of size $k$.
We define function $f$ as follows. Let $f(2k)=k\times (k-1)$ for even number $2k$ and $f(2k+1)=k^2$ for odd number $2k+1$.
$f(n)$ is equal to the minimum number of $e(n_1)+e(n_2)$ so that $n=n_1+n_2$.
In fact, $f(2k)=2\times e(k)=2\times \frac{1}{2} k\times (k-1)$ and $f(2k+1)=e(k)+e(k+1)=\frac{1}{2} k\times (k-1)+\frac{1}{2} k\times (k+1)=k^2$.

\begin{lem}\label{lem:hatonosu}
Consider a connected simple matroid $M$ of rank 3.
When $\{A_1,A_2,A_3\}$ is a 3-partition in $M$,
$e(|A_1|)+e(|A_2|)+e(|A_3|)$ is equal to or more than the sum of $f(|F|)$ over all facet-defining flats $F$ of rank 2.
\end{lem}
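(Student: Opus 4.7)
The plan is to use a double counting argument on pairs of elements, exploiting two key facts: that in a simple rank 3 matroid any two distinct rank 2 flats meet in at most one point (Lemma \ref{lem:ranktwoes}), and that by the 3-partition hypothesis every facet-defining flat $F$ of rank 2 misses at least one of the $A_k$.

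First I would fix a facet-defining flat $F$ of rank 2 and analyze how $F$ splits across the partition. Set $a_k := |F\cap A_k|$ for $k=1,2,3$. Since no facet-defining flat of rank 2 meets all three blocks, at least one $a_k$ equals $0$, so up to relabeling $|F|=a_i+a_j$ for two indices $i,j$. Expanding a binomial coefficient yields
\[
e(|F|) \;=\; \binom{a_i+a_j}{2} \;=\; e(a_i)+e(a_j)+a_i a_j,
\]
so the number of pairs inside $F$ that lie in a single block of the partition equals $e(a_i)+e(a_j)=\sum_k e(a_k)$. By the very definition of $f$ (the minimum of $e(n_1)+e(n_2)$ over decompositions $n=n_1+n_2$), we have $e(a_i)+e(a_j)\geq f(|F|)$. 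Summing over all facet-defining flats $F$ of rank 2 gives
\[
\sum_{F}\sum_{k=1}^{3} e(|F\cap A_k|) \;\geq\; \sum_{F} f(|F|).
\]

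For the reverse inequality, I would switch the order of summation and bound $\sum_F e(|F\cap A_k|)$ for each fixed $k$. The quantity $e(|F\cap A_k|)$ counts unordered pairs of elements of $A_k$ that are simultaneously contained in $F$. Since $M$ is simple and of rank 3, Lemma \ref{lem:ranktwoes} tells us that any two distinct facet-defining flats of rank 2 intersect in at most one element, so each pair $\{x,y\}\subseteq A_k$ is contained in at most one facet-defining flat of rank 2. Therefore
\[
\sum_{F} e(|F\cap A_k|) \;\leq\; \binom{|A_k|}{2} \;=\; e(|A_k|),
\]
and summing over $k=1,2,3$ gives $\sum_F\sum_k e(|F\cap A_k|)\leq e(|A_1|)+e(|A_2|)+e(|A_3|)$. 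Chaining this with the previous inequality yields the lemma.

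The only potential subtlety, and the step I would be most careful with, is the edge case where a facet-defining flat $F$ is contained entirely inside a single block $A_k$; but then two of the $a_\ell$ vanish and $\sum_k e(a_k)=e(|F|)\geq f(|F|)$ still holds, so the bound goes through uniformly. No further case analysis is required.
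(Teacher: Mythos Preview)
Your proof is correct and follows essentially the same double-counting argument as the paper: both count, for each facet-defining flat $F$ of rank $2$, the pairs inside $F$ that lie in a single block $A_k$, bounding this below by $f(|F|)$ (since $F$ misses one block) and observing via Lemma~\ref{lem:ranktwoes} and simplicity that each pair in $A_k$ is used by at most one $F$. Your write-up is more explicit and symbolic than the paper's, and your treatment of the edge case $F\subseteq A_k$ is a nice addition, but the underlying idea is identical.
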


\begin{proof}
Consider the three complete graphs such that the sets of vertices are $A_1,A_2,A_3$.
Then the sum of the numbers of their edges are $e(|A_1|)+e(|A_2|)+e(|A_3|)$.
Any original facet-defining flat $F$ cannot intersect all of $A_1,A_2,A_3$ by the definition of a 3-partition.
For each $A_i$, each pair $\{x,y\}$ of $x,y\in A_i$ is included in at most one original facet-defining flat of rank 2 by Lemma \ref{lem:ranktwoes} and the simpleness of $M$.
Each facet-defining flat $F$ is included in one of $A_1\cup A_2$, $A_2\cup A_3$, or $A_3\cup A_1$.
Therefore the number of edges included in each $F$ is at least $f(|F|)$.
\end{proof}

\subsection{The graph induced from original facets}

In this subsection, we introduce a graph to investigate the existence of an included matroid.

\begin{df}
Consider a connected simple matroid $M$ of rank 3 as an original matroid.
For any disjoint sets $A_1$ and $A_2$ on $E$, we define the graph $g(A_1,A_2)$ as follows.
The set of vertices of $g(A_1,A_2)$ is $A_2$.
For $x,y\in A_2$, $\{x,y\}$ is an edge if and only if
 ${\cal B}(M)$ has a facet-defining flat $F$ of rank 2 such that $\{x,y\}\subseteq F$ and $|F\cap A_1|\geq 1$.
\end{df}

The graph $g(A_1,A_2)$, which is defined on $A_2$, is said to be {\it connected} if $A_2$ is a connected component of the graph.

\begin{lem}\label{lem:graphdis}
For a connected simple matroid $M$ of rank 3 and a 3-partition $\{A_1,A_2,A_3\}$ in $M$,
$g(A_1,A_3)$ and $g(A_2,A_3)$ have no common edge.
\end{lem}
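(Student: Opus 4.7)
The plan is to suppose for contradiction that some pair $\{x,y\}\subseteq A_3$ is an edge of both $g(A_1,A_3)$ and $g(A_2,A_3)$, and then derive a contradiction by splitting into two cases based on whether the facet-defining flats witnessing the two edges are the same or different.

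More concretely, by the definition of $g(A_i,A_3)$, there exist facet-defining flats $F_1$ and $F_2$ of rank $2$ such that $\{x,y\}\subseteq F_1$, $|F_1\cap A_1|\geq 1$, and $\{x,y\}\subseteq F_2$, $|F_2\cap A_2|\geq 1$. First I would handle the easy case $F_1=F_2$: then $F_1$ meets $A_1$, meets $A_2$, and contains $\{x,y\}\subseteq A_3$, so $F_1$ intersects all three blocks of the partition, which directly contradicts the definition of a 3-partition in $M$ (no facet-defining flat of rank $2$ intersects all of $A_1,A_2,A_3$).

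In the remaining case $F_1\neq F_2$, I would invoke Lemma \ref{lem:ranktwoes}: since $M$ is a simple matroid of rank $3$, any two distinct flats of rank $2$ intersect in at most one element, so $|F_1\cap F_2|\leq 1$. But $\{x,y\}\subseteq F_1\cap F_2$ forces $|F_1\cap F_2|\geq 2$, a contradiction. Combining the two cases yields the claim.

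I do not expect any serious obstacle: both cases are short, and the only inputs needed are the definition of a 3-partition and Lemma \ref{lem:ranktwoes}. The mild subtlety is just noting that one must check the case $F_1=F_2$ separately, since Lemma \ref{lem:ranktwoes} is stated for distinct flats, but this is handled trivially by the 3-partition hypothesis.
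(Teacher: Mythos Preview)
Your proposal is correct and follows essentially the same argument as the paper's proof. The only cosmetic difference is that the paper uses the 3-partition hypothesis to conclude directly that $F_1\neq F_2$ (since $F_1=F_2$ would intersect all three blocks), and then applies Lemma~\ref{lem:ranktwoes} and simplicity to get $|F_1\cap F_2|\leq 1$; your explicit two-case split is logically equivalent.
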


\begin{proof}
Suppose that $\{x,y\}$ is a common edge of $g(A_1,A_3)$ and $g(A_2,A_3)$. By the definition of the graph, there exists  a facet-defining flat $F_1$ of rank 2 including $\{x,y\}$ and intersecting $A_1$. Moreover there exists a facet-defining flat $F_2$ of rank 2 including $\{x,y\}$ and intersecting $A_2$. By the definition of a 3-partition, there exists no facet-defining flat intersecting all of $A_1, A_2,$ and $A_3$. Therefore $F_1$ and $F_2$ are distinct. The intersection of two distinct facet-defining flats $F_1$ and $F_2$ of rank 2 has at most rank 1 by Lemma \ref{lem:ranktwoes}. Since $M$ is simple, we have $|F_1\cap F_2|\leq 1$. It contradicts that both $F_1$ and $F_2$ include $\{x,y\}$.
\end{proof}





Given an original matroid ${\cal B}(M)$, how do we find an included matroid base system?
By using Lemma \ref{lem:2facets}, we can try to find a 3-partition in $M$.
Consider the original matroid and some additional inequalities as non-original facet-defining inequalities obtained from the 3-partition.
To make an included matroid base system, what inequalities we should add further to such a matroid base system as restrictions?

When it satisfies $(F_1,1)_\leq$ and $(F_2,2)_\leq$ with neither 
$F_1\subseteq F_2$ nor $F_1\cap F_2=\emptyset$,
assume that it satisfies $(F_1\cup F_2,2)_\leq$.

For $(F_1,2)_\leq$ and $(F_2,2)_\leq$, when $|F_1\cap F_2|\geq 2$ and $r(F_1\cap F_2)=2$,
we have two ways to hold submodularity $r(F_1)+r(F_2) \geq r(F_1\cup F_2)+r(F_1\cap F_2)$: (1) add $(F_1\cap F_2,1)_\leq$ as a restriction, 
(2) add $(F_1\cup F_2,2)_\leq$ as a restriction.

Note that we cannot add inequalities when they contradict that $(A_1\cup A_2,2)_\leq$ appeared in Lemma \ref{lem:2facets} is a facet-defining inequality.
When $F_1$ or $F_2$ is such a flat, the additional inequality should be of type (1).

\begin{lem}\label{lem:forcing1}
Consider a connected simple matroid $M$ of rank 3 as an original matroid.
Assume that ${\cal B}(M)$ has an included matroid base system ${\cal B}(M')$ such that $A$ is a flat of rank 1.
Let $C\subseteq E$ be a (non-empty) connected vertices of $g(A,B)$ where $B$ is a non-empty set disjoint from $A$.
Then $A\cup C$ has rank 2 on $M'$.
\end{lem}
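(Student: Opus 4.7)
The plan is to establish both $r'(A \cup C) \geq 2$ and $r'(A \cup C) \leq 2$, where $r'$ denotes the rank function of $M'$. Since $M'$ is a connected matroid of rank $3$, it is loopless, so every single element has $r'$-rank $1$. Because $A$ is a flat of $M'$ with $r'(A) = 1$, any $z \in E \setminus A$ lies outside the closure of $A$ in $M'$, so $r'(A \cup \{z\}) = 2$. In particular, since $C \subseteq B$ is non-empty and disjoint from $A$, we already get $r'(A \cup C) \geq 2$, and we also record $r'(A \cup \{z\}) = 2$ for every $z \in C$, a fact we will need below.

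The crux of the argument is a single-edge bound: for every edge $\{x, y\}$ of $g(A, B)$, the inequality $r'(A \cup \{x, y\}) \leq 2$ holds. By the definition of the graph there exists an original facet-defining flat $F$ of $M$ with $r(F) = 2$, $\{x, y\} \subseteq F$, and some $a \in F \cap A$. Since ${\cal B}(M') \subseteq {\cal B}(M)$, Lemma \ref{lem:r1r2} gives $r'(F) \leq r(F) = 2$, so $r'(\{a, x, y\}) \leq 2$. Because $x, y \in B$ are disjoint from $A$, the intersection $A \cap \{a, x, y\} = \{a\}$, and $r'(\{a\}) = 1$ by looplessness. Submodularity in $M'$ then yields
$$r'(A \cup \{x, y\}) + 1 \;=\; r'(A \cup \{a, x, y\}) + r'(\{a\}) \;\leq\; r'(A) + r'(\{a, x, y\}) \;\leq\; 3,$$
so $r'(A \cup \{x, y\}) \leq 2$.

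I then induct on $|C|$. The base case $|C| = 1$ is immediate, since adding one element raises the rank by at most $1$, giving $r'(A \cup C) \leq r'(A) + 1 = 2$. For $|C| \geq 2$, pick a spanning tree of the connected induced subgraph $g(A, B)[C]$, let $y$ be a leaf with unique tree-neighbor $x$, and set $C' = C \setminus \{y\}$. Since removing a leaf preserves connectedness, $g(A,B)[C']$ is still connected, so the inductive hypothesis gives $r'(A \cup C') \leq 2$, while the single-edge bound gives $r'(A \cup \{x, y\}) \leq 2$. Applying submodularity in $M'$ to $A \cup C'$ and $A \cup \{x, y\}$ — whose union is $A \cup C$ and whose intersection is $A \cup \{x\}$ with $r'(A \cup \{x\}) = 2$ — yields $r'(A \cup C) \leq 2 + 2 - 2 = 2$, completing the proof.

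The main obstacle is the single-edge step: one must translate the purely combinatorial fact that $\{x,y\}$ is an edge of $g(A,B)$ (a graph defined via facet-defining flats of the original matroid $M$) into a rank inequality inside the smaller matroid $M'$. The bridge is Lemma \ref{lem:r1r2}, which transports validity of $(F,2)_{\leq}$ from $M$ to $M'$, combined with the observation that the element $a \in F \cap A$ provides a non-loop intersection point that makes submodularity effective rather than vacuous.
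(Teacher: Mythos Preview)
Your proof is correct and follows essentially the same approach as the paper: both argue by induction on $|C|$, establish the single-edge bound $r'(A\cup\{x,y\})\le 2$ via submodularity applied to $A$ and a rank-$2$ set $\{a,x,y\}\subseteq F$ coming from the definition of $g(A,B)$, and then extend along edges using submodularity with intersection $A\cup\{x\}$ of rank $2$. Your version is a bit more explicit (separately noting $r'(A\cup C)\ge 2$ and invoking Lemma~\ref{lem:r1r2}), but the argument is the same.
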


\begin{proof}
We use an induction on the size of connected vertices $C$ of $g(A,B)$.

The case of $|C|=1$ follows from that $A$ is a flat on $M'$.
Consider the case of $C=\{x,y\}$ where $\{x,y\}$ is an edge of $g(A, B)$.
By the definition of $g(A, B)$, there exists $z\in A$ such that $\{x,y,z\}$ has rank 2 on $M$.
Since $A$ is a flat of rank 1  on $M'$, $A\cup \{x,y\}$ has rank 2 on $M'$ by submodularity.

Next we show the inductive step of the proof. Assume that $C$ is connected and  $C\cup x$ is connected on the graph $g(A, B)$. By the induction hypothesis, $A\cup C$ has rank 2  on $M'$. Let $\{x,y\}$ be an edge of $g(A,B)$ with $y\in C$. By the induction hypothesis, $A\cup\{x,y\}$ has rank 2 on $M'$.
Since $A$ is a flat of rank 1 on $M'$, $(A\cup C)\cap (A\cup\{x,y\})=A\cup y$ has rank 2 on $M'$.
By submodularity $r'(A \cup C)+r'(A\cup \{x,y\})\geq r'(A\cup y)+r'(A\cup C\cup x)$ on $M'$, $A\cup C \cup x$ has rank 2 on $M'$.
\end{proof}

\begin{lem}\label{lem:flat2}
Consider a connected simple matroid $M$ of rank 3 as an original matroid.
Assume that ${\cal B}(M)$ has an included matroid base system ${\cal B}(M')$ that has a flat $A$ of rank 2.
Then every connected component of $g(B,A)$ has rank 1 on $M'$ where $B$ is a non-empty set  disjoint from $A$.
\end{lem}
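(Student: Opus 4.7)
The plan is to prove this by induction on the size of a connected component $C$ of the graph $g(B,A)$, mirroring the inductive structure used in Lemma \ref{lem:forcing1}. Throughout, let $r'$ denote the rank function of $M'$ and recall that $r'(X)\leq r(X)$ for every $X\subseteq E$ by Lemma \ref{lem:r1r2}, and that $M'$ has no loops (since it is connected of rank $3$ on $E$ with $|E|\geq 2$).

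For the base case $|C|=1$, the single vertex has rank $1$ on $M'$ because $M'$ is loopless. For the inductive step, assume $C$ is connected in $g(B,A)$ with $r'(C)=1$, and let $x\in A\setminus C$ be such that $\{x,y\}$ is an edge of $g(B,A)$ for some $y\in C$. By definition of the graph there exists an original facet-defining flat $F$ of rank $2$ with $\{x,y\}\subseteq F$ and $F\cap B\neq\emptyset$. The key observation is that $F\not\subseteq A$ (since $F$ meets $B$ and $B\cap A=\emptyset$), so $F$ contains some element $z\notin A$. Because $A$ is a flat of rank $2$ on $M'$, adjoining any element outside $A$ raises the rank to the full rank $3$; hence $r'(A\cup F)=3$. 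Applying submodularity in $M'$,
$$r'(A)+r'(F)\;\geq\;r'(A\cap F)+r'(A\cup F),$$
together with $r'(A)=2$, $r'(F)\leq r(F)=2$, and $r'(A\cup F)=3$, yields $r'(A\cap F)\leq 1$. Since $\{x,y\}\subseteq A\cap F$, this gives $r'(\{x,y\})\leq 1$.

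Now apply submodularity once more, using $y\in C\cap\{x,y\}$:
$$r'(C)+r'(\{x,y\})\;\geq\;r'(\{y\})+r'(C\cup\{x\}).$$
Since $r'(C)=1$, $r'(\{x,y\})\leq 1$, and $r'(\{y\})=1$ (no loops), we conclude $r'(C\cup\{x\})\leq 1$, and in fact $r'(C\cup\{x\})=1$ because $M'$ is loopless. This completes the induction.

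The only subtlety to watch for is the step $r'(A\cup F)=3$, which critically uses both that $A$ is a flat of $M'$ (not merely of $M$) and that $F$ escapes $A$ via $B$; without the hypothesis $B\cap A=\emptyset$ we could not conclude $F\not\subseteq A$. Everything else is a direct application of submodularity and the weak-map inequality $r'\leq r$, so no further obstacle is anticipated.
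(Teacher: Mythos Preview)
Your proof is correct and follows essentially the same approach as the paper. Both arguments hinge on the same key step: for an edge $\{x,y\}$ of $g(B,A)$, use that the associated rank-$2$ flat meets $B$ (hence escapes $A$), so $r'(A\cup F)=3$, and then submodularity forces $r'(\{x,y\})\leq 1$. The only cosmetic difference is that the paper applies submodularity to $A$ and the triple $\{x,y,z\}$ (with $z\in F\cap B$) rather than to $A$ and $F$, and then propagates along the component by invoking that parallelism is an equivalence relation, whereas you carry out an explicit induction with a second submodularity inequality; these are equivalent formulations of the same idea.
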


\begin{proof}
When $\{x\}$ is contained in no edges in $g(B,A)$, the statement holds since $M'$ is connected and therefore loopless.
Let $\{x,y\}$ be an edge of $g(B, A)$.
By the definition of $g(B, A)$, there exists $z\in B$ such that $\{x,y,z\}$ has rank 2 on the included matroid $M'$. Let $r'$ be the rank function of the included matroid $M'$. Note that $r'(X)\leq r(X)$ for any $X\subseteq E$ by Lemma \ref{lem:r1r2}.
Since $A$ is a flat of rank 2 on $M'$, we have $r'(A\cup z)=3$.
Therefore, by submodularity $r'(\{x,y,z\})+r'(A)\geq r'(A\cup z)+r'(\{x,y\})$, we have $r'(\{x,y\})=1$.
Since the parallel relation on a matroid is an equivalence relation, the connected component including $\{x,y\}$ has rank 1 on $M'$.
\end{proof}

\begin{exa}
Consider a matroid $M_1$ shown in Example \ref{exa:seven}.
$g(\{b,d\},\{b,d\}^c)$ has edges $\{a,c\}$ and $\{a,e\}$.
Therefore, its connected components are $\{a,c,e\},\{g\},$ and $\{f\}$.
When there exists an included matroid base system with a facet-defining inequality $(\{b,d\},1)_\leq$, 
by Lemma \ref{lem:forcing1}, $r'(\{b,d,a,c,e\})=2$.
By Lemma \ref{lem:flat2}, $r'(\{c,e\})=1$.
In fact, ${\cal B}(M_1)$ has an included matroid base system ${\cal B}(M_2)$ shown in Example \ref{exa:seven}.
\end{exa}

\subsection{Decomposition of a matroid base system of rank 3}

In this subsection, we consider decomposition problem of a connected simple matroid base system of rank 3. A facet of a matroid base system is also a matroid base system.
Note that the matroid base system ${\cal B}(M)\cap (A,r(A))_=$ defined by a facet-defining inequality $(A,r(A))_\leq$ of rank 1 or 2 has also rank 3.
We call the matroid base system defined as a facet the {\it matroid base system on the facet}.
By Theorem \ref{thm:facetbase}, the matroid base system defined as a facet has two connected components:
 one connected component is a matroid base system of rank 1, the other connected component is a matroid base system of rank 2.

Consider a decomposable connected matroid base system ${\cal B}(M)$ of rank 3.
${\cal B}(M)$ has an included matroid base system ${\cal B}(M')$ with a rank function $r'$ in the decomposition. For a non-original facet-defining inequality $(A,r'(A))_\leq$ of ${\cal B}(M')$,
$M$ has another included matroid base system that has the facet defined by the inverse inequality $(A,r'(A))_\geq$ by the definition of the decomposition.
We call such an included matroid base system an {\it included matroid base system on the other side of the facet}.

For two included matroid base systems ${\cal B}(M_1)$ and ${\cal B}(M_2)$ that have a common facet,
the intersection ${\cal B}(M_1) \cap {\cal B}(M_2)$ is the matroid base system on the common facet.
When a matroid $M$ and an included matroid base system ${\cal B}(M')$ of $M$ are given, 
the matroid base system ${\cal B}(M')\cap (A,r'(A))_=$ defined as a facet narrows down possible candidates for included matroid base systems on the other side of the facet since any included matroid base system on the other side must have ${\cal B}(M')\cap (A,r'(A))_=$ as a facet.
Note that the rank of a non-original facet-defining flat is 1 or 2 on $M'$.

\begin{exa}\label{exa:typed}
We consider the matroid shown in Example \ref{exa:seven} again.
This matroid base system ${\cal B}(M)$ is decomposed into four included matroid base systems $M_1, M_2, M_3,$ and $M_4$ as follows. 
Since ${\cal B}(M_1)$ has three non-original facet-defining flats $(\{a,b,c,d,e\},2)_\leq$, $(\{b,d\},1)_\leq$ and $(\{c,e\},1)_\leq$, we try to find an included matroid base system on the other side of each of the three facets. 
The matroid $M_{12}$ on $(\{a,b,c,d,e\},2)_\leq$ is obtained from imposing $(\{f,g\},1)_\leq$ on $M_1$
since $(E,3)_=\cap (\{a,b,c,d,e\},2)_= =(E,3)_=\cap(\{a,b,c,d,e\},2)_\leq \cap (\{f,g\},1)_\leq$.
$${\cal B}(M_{12})=(E,3)_=\cap (\{a,b,c,d,e\},2)_\leq\cap (\{b,d\},1)_\leq\cap (\{c,e\},1)_\leq\cap (\{f,g\},1)_\leq.$$

The included matroid base system ${\cal B}(M_2)$ on the other side of the facet ${\cal B}(M_{12})$ should satisfy the following conditions.
\begin{itemize}
\item ${\cal B}(M_2)$ is included in ${\cal B}(M)$,
\item $(\{f,g\},1)_\leq$ is a non-original facet-defining inequality of ${\cal B}(M_2)$,
\item ${\cal B}(M_{12})$ is the matroid base system on $(\{f,g\},1)_=$ as a facet of ${\cal B}(M_2)$.
\end{itemize}
Note that ${\cal B}(M_{12})={\cal B}(M_1)\cap{\cal B}(M_2)$. So we have
$${\cal B}(M_2)=(E,3)_=\cap (\{a,b,c\},2)_\leq\cap (\{a,d,e\},2)_\leq\cap (\{c,e,f,g\},2)_\leq\cap (\{f,g\},1)_\leq.$$

Similarly, we can find the included matroid base systems in the decomposition other than ${\cal B}(M_1)$ and ${\cal B}(M_2)$.
$${\cal B}(M_3)=(E,3)_=\cap (\{a,b,d,f,g\},2)_\leq\cap (\{c,e,g\},2)_\leq\cap (\{a,b,d\},1)_\leq,$$
$${\cal B}(M_4)=(E,3)_=\cap (\{a,c,e,f,g\},2)_\leq\cap (\{b,d,f\},2)_\leq\cap (\{a,c,e\},1)_\leq.$$

\begin{figure}
\begin{center}
\includegraphics{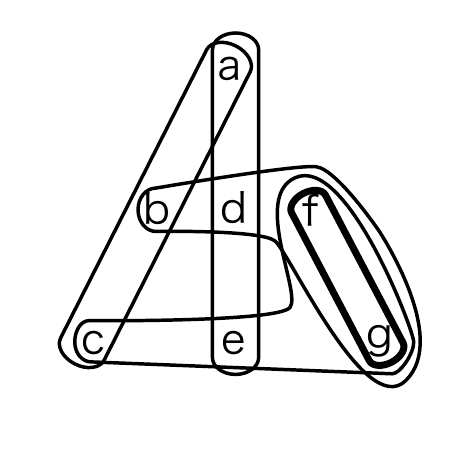}
\includegraphics{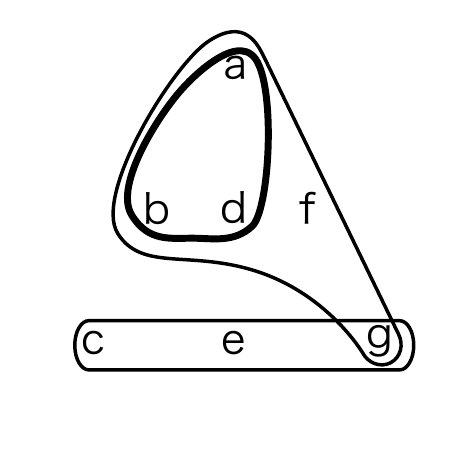}
\includegraphics{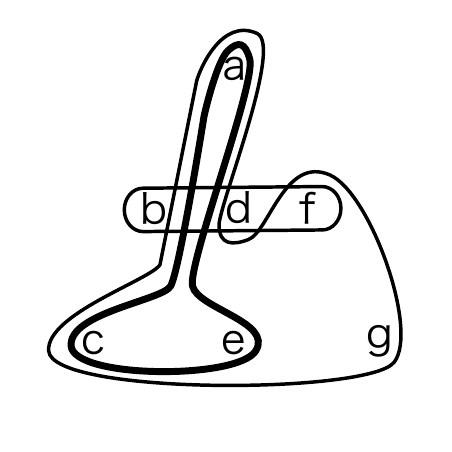}
\caption{Included matroids $M_2$, $M_3$ and $M_4$}\label{fig:matroids}
\end{center}
\end{figure}

Among these four matroids, the following are the pairs of matroids whose base systems  have a common facet.
$$\{M_1,M_2\},\{M_1,M_3\},\{M_1,M_4\},\{M_2,M_3\},\{M_2,M_4\}.$$

Since ${\cal B}(M_3)\cap {\cal B}(M_4)=(\{a\},0)_=\cap (\{b,d\},1)_=\cap (\{c,e\},1)_=\cap (\{f,g\},1)_=$, ${\cal B}(M_3)$ and ${\cal B}(M_4)$ do not have any common facet.
\end{exa}

\begin{lem}\label{lem:3parts}
Consider a connected simple matroid $M$ of rank 3 as an original matroid that has a 3-partition $\{A_1,A_2,A_3\}$.
Assume that an included matroid base system ${\cal B}(M_1)$ has non-original facet-defining inequalities $(A_1,1)_\leq$ and $(A_1\cup A_2,2)_\leq$.
Assume that there exists a decomposition using ${\cal B}(M_1)$.
Then there exists an included matroid base system which has non-original facet-defining inequalities $(A_3,1)_\leq$ and $(A_3\cup A_1,2)_\leq$.
Moreover, there exists another included matroid base system which has non-original facet-defining inequalities $(A_2,1)_\leq$ and $(A_2\cup A_3,2)_\leq$.
\end{lem}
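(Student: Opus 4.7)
The plan is to consider $M^{(a)}$, the included matroid on the other side of $M_1$'s facet $(A_1\cup A_2,2)_\leq$ in the given decomposition, and show it has the two desired non-original facets; the analogous choice of the other-side matroid of $M_1$'s facet $(A_1,1)_\leq$ handles the second claim. By Definition \ref{df:decomp}(d) together with Lemma \ref{lem:note}, $M^{(a)}$ already carries the non-original facet-defining inequality $(A_3,1)_\leq$, so the substance is producing the second facet $(A_3\cup A_1,2)_\leq$.

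First I will exploit the shared-facet structure. By Lemma \ref{lem:contract}, the shared facet ${\cal B}(M^{(a)})\cap(A_3,1)_= = {\cal B}(M_1)\cap(A_1\cup A_2,2)_=$ decomposes as ${\cal B}(M^{(a)}|A_3)\oplus{\cal B}(M^{(a)}/A_3)$ on one side and as ${\cal B}(M_1|(A_1\cup A_2))\oplus{\cal B}(M_1/(A_1\cup A_2))$ on the other; matching components gives $M^{(a)}/A_3=M_1|(A_1\cup A_2)$. Since $(A_1,1)_\leq$ is facet-defining for $M_1$, $A_1$ is a rank-1 flat in $M_1|(A_1\cup A_2)$, and since $M_1|(A_1\cup A_2)$ is connected of rank 2 by Theorem \ref{thm:facetbase}, one has $r_{M_1|(A_1\cup A_2)}(A_2)=2$. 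Via contraction this yields $r^{(a)}(A_1\cup A_3)=2$ and $r^{(a)}(A_2\cup A_3)=3$.

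The core is a ridge-tracing argument. The ridge $R := {\cal B}(M_1)\cap(A_1,1)_=\cap(A_1\cup A_2,2)_=$, which is the non-connected matroid whose connected components are the parallel classes $A_1, A_2, A_3$, is a codim-2 face of $B(M^{(a)})$ and so lies in exactly two facets of $M^{(a)}$; one of them is the shared facet $(A_3,1)_=$, so the task is to pin down the other. Any facet-defining hyperplane $(A,r^{(a)}(A))_=$ tight on $R$ must have $A$ equal to a union of the $A_i$'s, because bases of $R$ independently pick one element from each $A_i$; this restricts the candidates to $(A_1,1)_=$, $(A_2,1)_=$, $(A_1\cup A_3,2)_=$, and $(A_2\cup A_3,2)_=$. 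The ranks computed above immediately eliminate $(A_2,1)_=$ and $(A_2\cup A_3,2)_=$. Checking that $(A_1\cup A_3,2)_\leq$ is itself facet-defining reduces, via Theorem \ref{thm:facetbase}, to verifying connectedness of $M^{(a)}/(A_1\cup A_3)$ and $M^{(a)}|(A_1\cup A_3)$: the quotient is a rank-1 matroid on $A_2$ which is loopless because $r_{M_1|(A_1\cup A_2)}(A_1\cup\{x\})=2$ for each $x\in A_2$, and the restriction is connected provided $r^{(a)}(A_1)\geq 2$, since that forces at least three parallel classes in $A_1\cup A_3$ (two from $A_1$ plus the class $A_3$) and so precludes a rank-1 plus rank-1 direct-sum decomposition. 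Non-originality of $(A_1\cup A_3,2)_\leq$ is immediate from the 3-partition condition $r_M(A_1\cup A_3)=3$.

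The main obstacle is ruling out the degenerate case $r^{(a)}(A_1)=1$, in which the alternative candidate $(A_1,1)_=$ could become the second facet and $M^{(a)}|(A_1\cup A_3)$ would disconnect. I expect this either to be handled by a structural contradiction—$(A_1,1)_\leq$ being simultaneously facet-defining for $M_1$ and $M^{(a)}$ on the same side of the hyperplane conflicts with the simpleness of $M$ and the unique pairing required by Definition \ref{df:decomp}(d)—or, when necessary, to be bypassed by directly constructing the desired matroid as ${\cal B}(M)\cap(A_3,1)_\leq\cap(A_1\cup A_3,2)_\leq$ and verifying submodularity via Theorem \ref{thm:facetsub}, mirroring the argument that established $M_1$ as a matroid and leveraging the 3-partition's symmetry in $A_1,A_2,A_3$.
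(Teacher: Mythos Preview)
Your approach is the same as the paper's: pass to the other-side matroid of each non-original facet of $M_1$ and then trace the ridge $R=(A_1,1)_=\cap(A_2,1)_=\cap(A_3,1)_=$ to locate the second non-original facet. The paper's proof is very terse here---it simply asserts that since $R$ is a ridge of the shared facet, ``another facet-defining equality should contain this ridge, therefore $(A_3\cup A_1,2)_\leq$''---whereas you actually carry out the candidate elimination via the rank computations $r^{(a)}(A_1\cup A_3)=2$ and $r^{(a)}(A_2\cup A_3)=3$ coming from $M^{(a)}/A_3=M_1|(A_1\cup A_2)$. That part of your write-up is a genuine improvement in rigor over the paper.

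The degenerate case $r^{(a)}(A_1)=1$ that you flag is a real subtlety, and the paper does not address it either: if $r^{(a)}(A_1)=1$ then $M^{(a)}|(A_1\cup A_3)$ is disconnected, so $(A_1\cup A_3,2)_\leq$ is \emph{not} facet-defining for $M^{(a)}$ and the second facet through $R$ is $(A_1,1)_=$ instead. So this is not a gap in your argument relative to the paper; you have simply made explicit a point both arguments leave open. Of your two proposed fixes, the structural-contradiction route via Definition~\ref{df:decomp}(d) does not obviously close it, since $M_1$ and $M^{(a)}$ may cut out different facets on the hyperplane $(A_1,1)_=$ and hence be paired with different partners. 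The direct-construction route---building ${\cal B}(M)\cap(A_3,1)_\leq\cap(A_1\cup A_3,2)_\leq$ by hand---abandons the given decomposition, but the lemma only asserts existence of \emph{some} included matroid with the stated facets, and its sole downstream use (Theorem~\ref{thm:nondiv}) only needs existence; so this route is viable provided you verify connectedness and that both inequalities are genuinely facet-defining there.
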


\begin{proof}
Since ${\cal B}(M)$ has a decomposition using ${\cal B}(M_1)$, there exists an included matroid base system on the other side of $(A_1,1)_=$,
 and there exists an included matroid base system on the other side of $(A_1\cup A_2,2)_\leq$.

By considering the matroid base system ${\cal B}(M_1)\cap (A_1\cup A_2,2)_=$ as a facet of ${\cal B}(M)$,
the included matroid base system ${\cal B}(M_2)$ on the other side of $(A_1\cup A_2,2)_\leq$ has non-original facet-defining inequality $(A_3,1)_\leq$. Because $(A_1,1)_=\cap (A_2,1)_=\cap (A_3,1)_=$ is a ridge of the facet ${\cal B}(M_1)\cap (A_1\cup A_2,2)_=$, another facet-defining equality should contain this ridge. Therefore, the included matroid base system ${\cal B}(M_2)$ has non-original facet-defining inequality $(A_3\cup A_1,2)_\leq$.

By considering the matroid base system ${\cal B}(M_1)\cap (A_1,1)_=$ on the facet, 
the included matroid base system on the other side of $(A_1,1)_\leq$ has non-original facet-defining inequalities $(A_2\cup A_3,2)_\leq$ and $(A_2,1)_\leq$.
\end{proof}




Consider small matroid base systems of rank 3 which are neither binary nor 2-decomposable by our computation.
There exists no such matroid of size 6. 
There exist two such matroids of size 7. 
There exist five such matroids of size 8. 

\section{Main results about matroid base systems}\label{sec:further}

This section is the main part of this paper and consists of two subsections.
In Section \ref{subsec:classification}, we classify the matroid base systems according to their decomposability. In Section \ref{subsec:counter}, we give a counterexample to the conjecture proposed by Lucas \cite{Lucas75}.

\subsection{A classification of matroid base systems}\label{subsec:classification}

In this subsection, we classify the matroid base systems according to their decomposability.

Lucas proved the following theorem.

\begin{thm} (Lucas \label{thm:lucas}\cite{Lucas75})
There exists no connected binary matroid whose base system includes that of another connected matroid on the same ground set.
\end{thm}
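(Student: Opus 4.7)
I would argue by contradiction. Suppose $M$ is a connected binary matroid on $E$ and $M'$ is a connected matroid on $E$ with ${\cal B}(M') \subsetneq {\cal B}(M)$. By Lemma \ref{lem:r1r2}, $r'(X) \leq r(X)$ for every $X \subseteq E$, and since both matroids are connected, $B(M)$ and $B(M')$ are polytopes of the same full dimension $|E|-1$ inside the hyperplane $\langle p,\chi_E\rangle = r(E)$. The plan is to turn the proper inclusion $B(M') \subsetneq B(M)$ into a $U_{2,4}$ minor of $M$, contradicting Tutte's excluded-minor characterization of binary matroids.

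First, I would produce two $M$-bases adjacent in the exchange graph that witness the weak-map gap. Since ${\cal B}(M') \subsetneq {\cal B}(M)$, I can choose $B \in {\cal B}(M')$ and $B^* \in {\cal B}(M) \setminus {\cal B}(M')$ with $|B \triangle B^*|=2$ by walking in the connected $M$-exchange graph and taking the last edge leaving ${\cal B}(M) \setminus {\cal B}(M')$. Writing $B^* = B - y + x$, the base $B^*$ must violate some facet-defining inequality $(A, r'(A))_{\leq}$ of ${\cal B}(M')$; a cardinality count using $|B\cap A| \leq r'(A) < |B^*\cap A|$ then forces $x \in A \setminus B$, $y \in B \setminus A$, $|B^* \cap A| = r'(A) + 1$, and in particular $r'(A) < r(A)$. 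By Theorem \ref{thm:facetbase} applied to $M'$, the set $A$ is a flat of $M'$ with both $M'|A$ and $M'/A$ connected. Moreover, the fundamental circuits of $x$ with respect to $B$ satisfy $C_{M'}(x,B) \subsetneq C_M(x,B)$, since $y$ lies in the latter but not in the former.

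The main obstacle is the final step: turning this strict circuit containment into a $U_{2,4}$ minor of $M$. I would pick $x' \in C_{M'}(x,B) \setminus \{x\}$; by the $M'$-exchange axiom together with $|B \cap A| = r'(A)$, necessarily $x' \in A \cap B$. Then, exploiting the connectedness of $M'|A$ and $M'/A$ to guarantee enough exchange flexibility, I would locate a second $M$-exchange partner $y' \in B \setminus A$ with $B - y' + x \in {\cal B}(M)\setminus{\cal B}(M')$, contract $B \setminus \{x',y,y'\}$ in $M$, and delete $E \setminus (B \cup \{x\})$. In the resulting rank-$2$ minor on $\{x, x', y, y'\}$, every two-element subset should be a basis: $\{x,y\}$ and $\{x,y'\}$ come from the two $M$-exchanges, while $\{x',y\}$ and $\{x',y'\}$ come from parallel $M'$-compatible exchanges, giving $U_{2,4}$. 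The delicate point, and the one where the facet-defining-flat hypothesis on $A$ is essential, is ensuring that no two of $\{x,x',y,y'\}$ become parallel after the contraction; this is the combinatorial heart of the argument and where I expect the main technical effort to concentrate.
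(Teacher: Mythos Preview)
The paper does not prove this theorem at all: it is merely stated with the citation \cite{Lucas75} and then used as a black box in Section \ref{subsec:classification}. There is therefore no ``paper's own proof'' against which your attempt can be compared.

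As for your sketch itself, the overall strategy (produce a $U_{2,4}$ minor from a strict weak-map inclusion) is the right direction, but the concrete construction you outline cannot work as written. You contract $T=B\setminus\{x',y,y'\}$, a set of $r(E)-3$ independent elements, and then restrict to $\{x,x',y,y'\}$. The rank of the resulting minor is
\[
r_M\bigl(T\cup\{x,x',y,y'\}\bigr)-r_M(T)=r_M(B\cup\{x\})-(r(E)-3)=r(E)-(r(E)-3)=3,
\]
not $2$, so you obtain a rank-$3$ matroid on four points, never $U_{2,4}$. Beyond this arithmetic slip, the existence of the second exchange partner $y'\in B\setminus A$ with $B-y'+x\in{\cal B}(M)$ is asserted rather than proved; it amounts to requiring $|C_M(x,B)\cap(B\setminus A)|\ge 2$, which does not follow automatically from the data you have set up. You yourself flag the parallel-avoidance issue after contraction as the ``combinatorial heart'' still to be done; together with the two points above, what you have is really an outline of intentions rather than a proof. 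Lucas's original argument is substantially more involved and does not proceed by such a direct four-element minor extraction.
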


In other words, a connected binary matroid is minimal in all the connected matroids on the same ground set with respect to weak-map order of the same rank.

We have the next corollary from Lemma \ref{lem:divweak}.

\begin{cor}\label{cor:minimal}
No connected matroid base system which is minimal with respect to weak-map order in all the connected matroids is decomposable.
\end{cor}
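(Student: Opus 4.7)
The plan is to prove the contrapositive: if a connected matroid base system ${\cal B}(M)$ is decomposable, then ${\cal B}(M)$ is not minimal with respect to the weak-map order in the class of connected matroid base systems on $E$. This is essentially an immediate assembly of Definition \ref{df:decomp} and Lemma \ref{lem:divweak}, with one observation about connectivity.

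First, suppose ${\cal B}(M)$ is decomposable, so by Definition \ref{df:decomp} we have matroid base systems $\{{\cal B}(M_i)\}_i$ with at least two members such that ${\cal B}(M) = \bigcup_i {\cal B}(M_i)$. Condition (b) of the definition tells us that each $M_i$ has exactly the same connected components as $M$. Since we assumed $M$ is connected, each $M_i$ is also connected, so each ${\cal B}(M_i)$ lies in the class of connected matroid base systems in which we are looking for minimality.

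Next, apply Lemma \ref{lem:divweak}: every piece ${\cal B}(M_i)$ of the decomposition satisfies ${\cal B}(M_i) \subseteq {\cal B}(M)$ in the weak-map order, i.e. $M \succeq M_i$. Because the decomposition consists of at least two pieces of the same dimension as $B(M)$, each $B(M_i)$ is a proper subset of $B(M)$, so ${\cal B}(M_i) \subsetneq {\cal B}(M)$, i.e. $M \succ M_i$ strictly. Hence ${\cal B}(M)$ dominates a strictly smaller connected matroid base system in the weak-map order and cannot be minimal among connected matroid base systems.

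There is essentially no obstacle here; the only subtlety worth checking is that each piece of a decomposition inherits connectivity from $M$, which is guaranteed by Condition (b) of Definition \ref{df:decomp} (otherwise one would have to rule out the degenerate case where a connected matroid is decomposed into pieces that are disconnected as matroids even though their polytopes have the full dimension $|E|-1$).
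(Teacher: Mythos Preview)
Your proof is correct and follows exactly the route the paper indicates: the paper simply states that the corollary follows from Lemma \ref{lem:divweak} without spelling out details, and your argument is precisely the natural unpacking of that claim (contrapositive, condition (b) of Definition \ref{df:decomp} for connectivity, Lemma \ref{lem:divweak} for the inclusion, and properness from having at least two full-dimensional pieces).
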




We classify matroid base systems into five types (a), (b), (c), (d), and (e) with respect to their decomposability and weak-map order.

(a) Binary matroids.\\
A binary matroid is minimal in the connected matroids with respect to inclusion by Theorem \ref{thm:lucas}.
 Therefore its base system is not decomposable by Corollary \ref{cor:minimal}.

(b) Non-binary but minimal matroids in the connected matroids with respect to inclusion.\\



A connected matroid base system of rank 4 which does not include any included matroid base system is known (Lucas \cite{Lucas75}).
We give a connected matroid of rank 3 which does not include any included matroid base system.

\begin{exa}\label{exa:minimal}
Let $E=\{a,b,c,d,e,f,g,h\}$.
The following connected simple matroid base system is an example of a matroid whose base system does not include any included matroid base system. 

$${\cal B}(M)=(E,3)_=\cap (afd,2)_\leq \cap (ebh,2)_\leq \cap (abg,2)_\leq\cap (efc,2)_\leq \cap (egd,2)_\leq\cap (ach,2)_\leq \cap(bcd,2)_\leq \cap (fgh,2)_\leq.$$

These flats define a matroid because any two flats of rank 2 intersect in at most one element.
This matroid is shown in Figure \ref{fig:minimal} where each line represents a flat of rank 2.
\begin{figure}
\begin{center}
\includegraphics{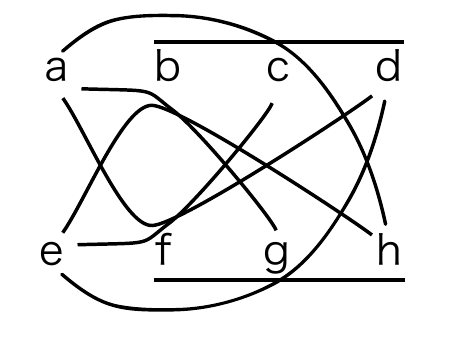}
\caption{A matroid $M$ which does not include any included matroid}\label{fig:minimal}
\end{center}
\end{figure}

Moreover, it is not binary since $M/\{a\}\backslash\{d,g,h\}=U_{2,4}$.
We prove that it does not include any included matroid base system in Theorem \ref{thm:minimal}.
\end{exa}

\begin{lem}\label{lem:minimal}
The matroid base system shown in Example \ref{exa:minimal} has no 3-partition.
\end{lem}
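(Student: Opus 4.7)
The plan is to apply Lemma~\ref{lem:hatonosu} to cut down the possible shapes of a 3-partition, and then eliminate the only surviving shape by direct counting. The key structural input is that every facet-defining flat of rank~$2$ in Example~\ref{exa:minimal} has size $3$, so by Lemma~\ref{lem:ranktwoes} two distinct such flats share at most one element; hence the $8\times 3=24$ pairs contained in some flat are all distinct. Since $\binom{8}{2}=28$, exactly four pairs of $E$ lie in no flat, and a direct inspection identifies them as $\{a,e\}, \{b,f\}, \{c,g\}, \{d,h\}$, which together form a perfect matching $\mathcal{M}$ on $E$.

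Now suppose $\{A_1,A_2,A_3\}$ is a $3$-partition. Since $f(3)=1$, Lemma~\ref{lem:hatonosu} gives $e(|A_1|)+e(|A_2|)+e(|A_3|)\ge 8$. With $|E|=8$ and $|A_i|\ge 2$, the only possible size patterns are $(2,3,3)$ and $(2,2,4)$. The pattern $(2,3,3)$ yields $1+3+3=7<8$ and is immediately impossible. In the pattern $(2,2,4)$, equality must hold in Lemma~\ref{lem:hatonosu}; tracing through its proof, this equality forces (i) every pair of distinct elements inside any $A_i$ is covered by some facet-defining flat of rank $2$, and (ii) no facet-defining flat lies wholly in a single $A_i$ (otherwise it would contribute $e(3)=3$ edges to the sum instead of the minimum $f(3)=1$). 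Applying (i) to the four-element part, call it $A_3$, we get that $A_3$ contains no pair of $\mathcal{M}$; since $\mathcal{M}$ partitions $E$, this forces $A_3$ to be a transversal of $\mathcal{M}$, i.e.\ one element from each of the four pairs, yielding $2^{4}=16$ candidate sets.

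The remaining step, which is the main piece of bookkeeping, is to show that condition (ii) fails for every such transversal $A_3$. Group the eight flats by which pair of $\mathcal{M}$ they miss; a short inspection shows that each pair of $\mathcal{M}$ is missed by exactly two flats, so the grouping accounts for all eight. Fix a flat $F$ that misses $P_j\in\mathcal{M}$: the transversals containing $F$ must agree with $F$ on each of the other three pairs and are free to choose either element of $P_j$, giving exactly two transversals per flat, hence $8\times 2 = 16$ flat--transversal incidences in total. On the other hand, by Lemma~\ref{lem:ranktwoes} any two distinct flats meet in at most one element, so their union has at least five elements and cannot fit inside a four-element set; thus no transversal contains two flats. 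Combining the two counts, each of the 16 transversals contains exactly one flat, so condition (ii) fails for every candidate $A_3$, and no $3$-partition can exist.
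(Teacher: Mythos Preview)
Your proof is correct. Both your argument and the paper's begin the same way: Lemma~\ref{lem:hatonosu} rules out shape $(3,3,2)$, and for the remaining shape $(4,2,2)$ equality must hold in that lemma, which you unpack into the two conditions (i) every internal pair lies in some rank-$2$ flat, and (ii) no flat sits inside a single block. From here the two arguments diverge.

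The paper invokes Lemma~\ref{lem:graphdis} on the graphs $g(A_2,A_1)$ and $g(A_3,A_1)$: since these are edge-disjoint subgraphs of $K_{A_1}$ (six edges), only six of the eight flats can contribute an edge there, and the paper concludes that the remaining two flats must lie in $A_2\cup A_3$, which is impossible for size reasons. Your route is different and more explicit: you observe that the four pairs of $E$ lying in no flat form a perfect matching $\mathcal{M}=\{\{a,e\},\{b,f\},\{c,g\},\{d,h\}\}$, so by condition~(i) the four-element block must be a transversal of $\mathcal{M}$; a clean double count ($8$ flats $\times$ $2$ transversals each $=16$ incidences among $16$ transversals, with no transversal containing two flats) shows every transversal contains a flat, contradicting condition~(ii). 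Your argument is self-contained---it does not need the auxiliary graphs $g(\cdot,\cdot)$ or Lemma~\ref{lem:graphdis}---at the price of a short explicit computation identifying $\mathcal{M}$ and the flat--pair correspondence. The paper's approach is more structural and in principle reusable, though as written its passage from ``six edges'' to ``two flats lie in $A_2\cup A_3$'' skips over the possibility of a flat of the form $A_2\cup\{z\}$ with $z\in A_1$, a point your argument never needs to address.
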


\begin{proof}
By Lemma \ref{lem:hatonosu}, no 3-partition is of size $(3,3,2)$ because $e(3)+e(3)+e(2)=7 < f(3)\times 8=8$. 
Therefore the size of the 3-partition $\{A_1,A_2,A_3\}$ is $ (|A_1|,|A_2|,|A_3|)=(4,2,2)$. 
Since any original facet-defining flat of rank 2 cannot intersect all of $A_1, A_2$ and $A_3$, any original facet-defining  flat of rank 2 intersects one of $A_1, A_2$ or $A_3$ in at least two elements. Note that
any original facet-defining flat of rank 2 has size at least 3.
Since $M$ is simple, any two original facet-defining flats of rank 2 intersect in at most one element by Lemma \ref{lem:ranktwoes}.
Since $e(4)+e(2)+e(2)=8$, each edge of the graph in Figure \ref{fig:622} is included in exactly one original facet-defining flat of rank 2 by a similar argument to the proof of Lemma \ref{lem:hatonosu}.

\begin{figure}
\begin{center}
\includegraphics{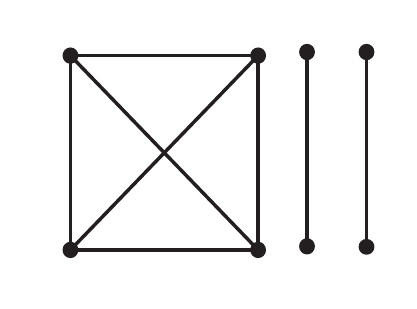}
\caption{Complete graphs of $A_1, A_2$ and $A_3$}\label{fig:622}
\end{center}
\end{figure}

By Lemma \ref{lem:graphdis}, $g(A_2, A_1)$ and $g(A_3, A_1)$ have no common edge.
Since the number of the edges in the complete graph on $A_1$ is 6,
the maximum of $g(A_2, A_1)+g(A_3, A_1)$ is 6.
Since the number of the facet-defining flats of rank 2 is 8, this implies that $A_2\cup A_3$ includes at least two original facet-defining flats of rank 2, which contradicts that two facet-defining flats intersect in at most one element by Lemma \ref{lem:ranktwoes}.
\end{proof}

\begin{thm}\label{thm:minimal}
The matroid base system shown in Example \ref{exa:minimal} does not include any included matroid base system.
\end{thm}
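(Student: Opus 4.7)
My strategy is to combine two earlier results via a contradiction. Lemma \ref{lem:2facets} says that if a connected simple matroid base system of rank $3$ that is not 2-decomposable has an included matroid base system, then that included matroid base system carries a 3-partition in $M$. Lemma \ref{lem:minimal} has just ruled out 3-partitions in $M$. So once I verify that $M$ itself is not 2-decomposable, any supposed included ${\cal B}(M')$ would force a 3-partition, which does not exist, and the proof is complete.

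The core task is therefore to establish 2-indecomposability via Theorem \ref{thm:2div}: no $A\subseteq E$ simultaneously satisfies $|A^c|\geq 2$, $r(A)=3$, and the facet-compatibility condition (3) with every rank-2 flat. The cases $|A|\leq 2$ and $|A|=7$ are immediate from the size/rank conditions, so I split the remaining cases $|A|\in\{3,4,5,6\}$. All four cases exploit the same structural inventory of $M$: there are exactly eight rank-2 flats, each of size $3$; each element lies on exactly three of them (so $24$ incidences in total); any two distinct rank-2 flats meet in at most one element by Lemma \ref{lem:ranktwoes} and simpleness; and the four ``missing pairs'' not contained in any rank-2 flat form a perfect matching on $E$.

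For $|A|=3$, condition (3) forces every pair inside $A$ to be a missing pair, impossible since the missing-pair matching contains no triangle. For $|A|=4$ or $|A|=5$, the fact that $|A|+|F|<|E|$ or close to it eliminates the $A\cup F=E$ alternative in most sub-cases, so an incidence count between $A$ and the eight flats forces several flats to be fully contained in $A$; but two or more size-$3$ flats with pairwise intersection at most one already overflow a set of size $4$ or $5$ (by a direct Bonferroni-type bound). For $|A|=6$, writing $A^c=\{x,y\}$, condition (3) forces every flat either to lie in $A$ or to contain $\{x,y\}$; at most one flat can contain $\{x,y\}$, leaving $x$ on at most one flat and contradicting that every element lies on three.

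The main obstacle is this case-by-case check for 2-indecomposability: it is not deep, but it requires consistently exploiting the structural inventory of $M$'s rank-2 flats to close each size case. Once 2-indecomposability is established, Lemma \ref{lem:2facets} immediately delivers a 3-partition of $M$ from the hypothetical included matroid base system ${\cal B}(M')$, contradicting Lemma \ref{lem:minimal} and finishing the proof.
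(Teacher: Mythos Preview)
Your proposal is correct and follows essentially the same route as the paper: assume an included matroid base system ${\cal B}(M')$, invoke Lemma~\ref{lem:2facets} (which requires non-2-decomposability) to produce a 3-partition in $M$, and contradict Lemma~\ref{lem:minimal}. The only difference is that the paper simply asserts ``${\cal B}(M)$ \ldots\ is not 2-decomposable'' without argument, whereas you supply a full case analysis via Theorem~\ref{thm:2div}; your incidence-counting and Bonferroni arguments for $|A|\in\{3,4,5,6\}$ are sound and make the proof self-contained.
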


\begin{proof}
Suppose that the matroid base system ${\cal B}(M)$ has an included matroid base system ${\cal B}(M')$.
Note that this matroid base system ${\cal B}(M)$ has 8 facet-defining flats of rank 2 and is not 2-decomposable. By Lemma \ref{lem:2facets}, ${\cal B}(M')$ has a 3-partition in $M$, which contradicts Lemma \ref{lem:minimal}.
\end{proof}






(c) Non-binary and non-minimal but indecomposable matroids.\\
We give an example of type (c) of size 8 in Example \ref{exa:nonminimal}.

\begin{exa}\label{exa:nonminimal}
The next example is an indecomposable matroid base system which includes an included matroid base system.

Let $E=\{a,b,c,d,e,f,g,h,i\}$.
$${\cal B}(M)=(E,3)_=\cap (bdfh,2)_\leq\cap(abc,2)_\leq\cap (ade,2)_\leq\cap (afg,2)_\leq \cap (ahi,2)_\leq \cap (bgi,2)_\leq \cap (cdi,2)_\leq \cap (cef,2)_\leq \cap (egh,2)_\leq.$$

The matroid $M$ is depicted in the left figure of Figure \ref{fig:nonminimal} where each line represents a facet-defining flat of rank 2.

\begin{figure}[ht]
\begin{center}
\includegraphics{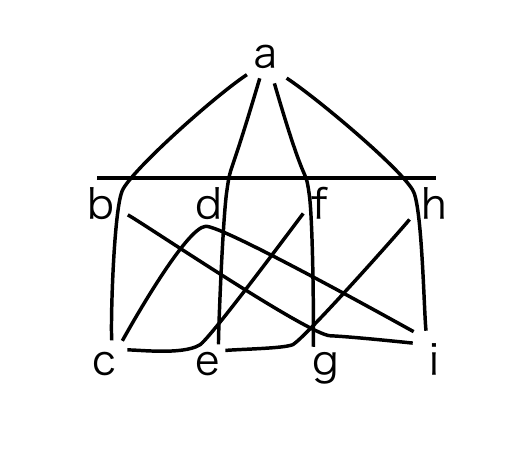}\quad \includegraphics{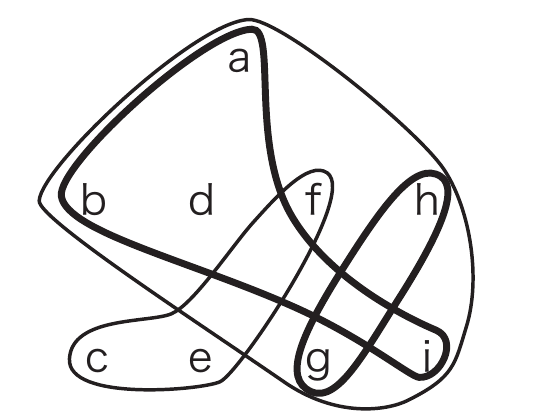}
\caption{Indecomposable original matroid $M$ and its included matroid $M_1$}\label{fig:nonminimal}
\end{center}
\end{figure}

This matroid base system includes the following included matroid base system shown in Figure \ref{fig:nonminimal}.

$${\cal B}(M_1)=(E,3)_=\cap (\{a,b,d,i\},1)_\leq\cap(\{g,h\},1)_\leq\cap (\{a,b,d,g,f,h,i\},2)_\leq\cap (\{c,e,f\},2)_\leq.$$

We can check easily that ${\cal B}(M_1)\subseteq {\cal B}(M)$ by Lemma \ref{lem:facetinc}.
However, by Theorem \ref{thm:nondiv} below,  ${\cal B}(M)$ is not decomposable.
\end{exa}

\begin{thm}\label{thm:nondiv}
The matroid base system shown in Example \ref{exa:nonminimal} is not decomposable.
\end{thm}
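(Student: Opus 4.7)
The plan is to assume for contradiction that $\mathcal{B}(M)$ has a nontrivial polytopal decomposition, and to derive a contradiction by exhausting all possible 3-partitions of $E$ that could support such a decomposition. First I would check that $\mathcal{B}(M)$ is not 2-decomposable: using Theorem \ref{thm:2div}, one verifies that for every flat $A$ of rank 3 with $|A^c| \geq 2$ at least one original facet-defining flat violates condition (3) of that theorem, so the hypothesis of Lemma \ref{lem:2facets} is in force. Lemma \ref{lem:2facets} then guarantees that any putative decomposition contains an included matroid $\mathcal{B}(M')$ associated with a 3-partition $\{A_1,A_2,A_3\}$ of $E$ in $M$, with $(A_1,1)_\leq$ and $(A_1\cup A_2,2)_\leq$ among its non-original facet-defining inequalities.

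Next I would apply Lemma \ref{lem:hatonosu} to sharply restrict the size pattern of the 3-partition. In $M$ the nine facet-defining flats of rank $2$ consist of one of size $4$ (contributing $f(4)=2$) and eight of size $3$ (each contributing $f(3)=1$), so $\sum_F f(|F|)=10$. Together with $|A_1|+|A_2|+|A_3|=9$ and $|A_i|\geq 2$, the only admissible size patterns are $(2,2,5)$ and $(2,3,4)$; the alternative $(3,3,3)$ is ruled out since $e(3)+e(3)+e(3)=9<10$.

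In the tight pattern $(2,3,4)$, equality in Lemma \ref{lem:hatonosu} forces every size-$3$ original facet-defining flat to be split as $(2,1)$ between exactly two of the three parts, and the size-$4$ flat $\{b,d,f,h\}$ to be split as $(2,2)$ between two parts. Combined with the condition that no facet-defining flat meets all three parts, this reduces the enumeration to a short list of inequivalent 3-partitions (the symmetry group of $M$ acts visibly on its flat diagram, which cuts the list further). The pattern $(2,2,5)$ is handled similarly, with two ``extra'' edges of slack, and is enumerated according to the placement of $\{b,d,f,h\}$ relative to the two doubleton parts. This combinatorial enumeration is the bookkeeping step.

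For each candidate 3-partition I would invoke Lemma \ref{lem:3parts}: the decomposition must also contain included matroids with non-original facet-defining inequalities $(A_3,1)_\leq,(A_3\cup A_1,2)_\leq$ and $(A_2,1)_\leq,(A_2\cup A_3,2)_\leq$. The contradiction then comes from Theorem \ref{thm:facetsub}: for each candidate 3-partition one checks that at least one of the three required included matroids fails submodularity when compared to an original facet-defining flat---typically the size-$4$ flat $\{b,d,f,h\}$, whose forced $(2,2)$ split severely restricts how the prescribed rank-$1$ flats $A_i$ and rank-$2$ flats $A_i\cup A_j$ can sit inside the original matroid. The main obstacle I anticipate is organizing this case analysis compactly: the decomposition enforces prescribed inequalities in three rotated positions around each 3-partition, and one must ensure no case is missed, using the symmetry of the flat diagram of $M$ in Figure \ref{fig:nonminimal} and the distinguished role of $\{b,d,f,h\}$ as the only size-$4$ flat to cut the analysis down to a manageable number of subcases.
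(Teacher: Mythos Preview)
Your overall architecture matches the paper's proof: rule out 2-decomposability via Theorem~\ref{thm:2div}, invoke Lemma~\ref{lem:2facets} to obtain a 3-partition, use Lemma~\ref{lem:hatonosu} with $f(4)+8f(3)=10>9=3e(3)$ to exclude the size pattern $(3,3,3)$, and then appeal to Lemma~\ref{lem:3parts} so that every part of the 3-partition must occur as a rank-1 flat in \emph{some} included matroid of the decomposition. Since the remaining patterns $(2,3,4)$ and $(2,2,5)$ always contain a part of size~2, the paper reduces the case analysis to this: for each 2-element subset $X\subseteq E$ (nine cases up to the symmetry of $M$), show that no included matroid can have $X$ as a rank-1 flat. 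This is a tighter organization than enumerating full 3-partitions as you propose.

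The gap in your plan is the final step. You propose to derive the contradiction from Theorem~\ref{thm:facetsub} by checking that ``one of the three required included matroids fails submodularity when compared to an original facet-defining flat.'' But the included matroid $M'$ is not determined by the two prescribed inequalities $(A_i,1)_\leq$ and $(A_i\cup A_{i+1},2)_\leq$; many further non-original facets may be present, so there is no single object whose submodularity you can test. What is actually needed is constraint \emph{propagation}: from $r'(A_i)=1$ one must deduce which further sets are forced to have rank~1 or~2 in $M'$, and then reach a contradiction---typically with the \emph{connectivity} of $M'$, not with a single submodular inequality. The paper does this via the graphs $g(X,X^c)$ together with Lemmas~\ref{lem:forcing1} and~\ref{lem:flat2}, tools you do not invoke. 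For instance, in the case $X=\{c,e\}$ the paper computes that $g(\{c,e\},\{c,e\}^c)$ has components $\{f\},\{g,h\},\{a,b,d,i\}$, forces $r'(\{a,b,c,d,e,i\})=2$ via Lemma~\ref{lem:forcing1}, then forces $r'(\{a,b,d,i\})=1$ via Lemma~\ref{lem:flat2}, and finally obtains $r'(\{a,b,d,f,g,h,i\})=2$, contradicting connectivity. A bare comparison with $\{b,d,f,h\}$ via Theorem~\ref{thm:facetsub} does not produce this chain. Your plan becomes correct once you replace the Theorem~\ref{thm:facetsub} step with this graph-based propagation and recognize that the endgame is usually a connectivity obstruction rather than a direct submodularity failure.
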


\begin{proof}
The matroid base system ${\cal B}(M)$ has one original facet-defining flat of size 4, and eight original facet-defining flats of size 3.
By Theorem \ref{thm:2div}, the matroid base system ${\cal B}(M)$ is not 2-decomposable.
By Lemma \ref{lem:2facets}, an included matroid base system ${\cal B}(M')$ has a 3-partition in $M$.
Since $e(3)+e(3)+e(3)=9$ and $f(4)+8\times f(3)=10$,
 $M$ has no 3-partition of size $(3,3,3)$ by Lemma \ref{lem:hatonosu}.
Therefore the size of a 3-partition is $(2,3,4)$ or $(2,2,5)$.
By Lemma \ref{lem:2facets} and Lemma \ref{lem:3parts},
we can assume that one of $A_1, A_2$, and $A_3$ has rank 1 on some included matroid base system.

Since this matroid has symmetry,
we have to consider decomposability only for $ab,ac,bc,be,bg,cg,bd,bf,ce$.
For each case of $X=\{b,e\}, \{c,g\}$, its graph $g(X,X^c)$ is connected.
Therefore it cannot have a 3-partition with connected component $X$ by Lemma \ref{lem:flat2}.
For each case of $X=\{a,b\},\{a,c\},\{b,c\},\{b,g\}$, $g(X,X^c)$ has two connected components and one of them has size 1.
By using Lemmas \ref{lem:forcing1} and \ref{lem:flat2}, we can show that there exists no included matroid base system with such a flat of rank 1.

As for $\{b,d\}$, $g(\{b,d\}, \{a,c,e,f,g,h,i\})$ has two connected components $\{f,h\}$, and $\{a,c,e,i,g\}$ since its edges are $f-h$ and $e-a-c-e-g$. However, since $\{b,d,f,h\}$ is of rank 2 on $M$, no included matroid base system has a 3-partition in $M$ with a flat $\{b,d\}$.
The case of $\{b,f\}$ is similar.

As for $\{c,e\}$, $g(\{c,e\}, \{a,b,d,f,g,h\})$ has three connected components $\{f\}, \{g,h\}$, and $\{a,b,d,i\}$. 
Therefore, on an included matroid $M'$ with a flat $\{c,e\}$, by Lemma \ref{lem:forcing1}, $\{c,e,g,h\}$ and $\{c,e,a,b,d,i\}$ have rank 2. 
$g(\{g,h\}, \{a,b,d,i\})$ is connected. 
Since $E-\{f\}$ cannot have rank 2, $\{a,b,c,d,e,i\}$ is a flat of rank 2 on $M'$.
Therefore, by Lemma \ref{lem:flat2}, $\{a,b,d,i\}$ has rank 1 on $M'$.
Since an included matroid $M'$ is connected, $\{a,b,d,i\}$ is a flat.
Therefore $g(\{a,b,d,i\}, \{g,h,f\})$ is connected, 
the rank of $\{a,b,d,i,g,h,f\}$ is 2, which contradicts the connectivity of the included matroid $M'$.
\end{proof}

(d) Non-binary and non-2-decomposable but decomposable matroids.\\

The matroid in Example \ref{exa:typed} is decomposable but not 2-decomposable.

Billera et al. \cite{Billera09} gave an example of a matroid decomposition consisting of three matroid base systems. However, it is also 2-decomposable. 

(e) Non-binary and 2-decomposable matroids.\\
You can easily find a lot of examples of this type, for example, uniform matroid $U_{2,4}$.




\subsection{A counterexample to Lucas's conjecture}\label{subsec:counter}

Lucas proposed the following conjecture.

\begin{con}\label{con:lucas}\cite{Lucas75}
  Assume that a matroid $M_1$ covers a matroid $M_2$ with respect to
  the weak-map order among matroid base polytopes of rank $r$ on $E$,
  that is, ${\cal B}(M_2)\subseteq {\cal B}(M_1)$ and there exists no matroid $M_3$ such that ${\cal B}(M_2)\subsetneq {\cal B}(M_3)\subsetneq {\cal B}(M_1)$.
  Moreover assume that the dimension of ${\cal B}(M_2)$ is less than that of ${\cal B}(M_1)$. Then ${\cal B}(M_2)$ is a facet of ${\cal B}(M_1)$.
\end{con}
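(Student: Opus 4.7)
The plan is to exhibit a concrete counterexample: two matroids $M_1$ and $M_2$ on a common ground set $E$ such that ${\cal B}(M_2)\subsetneq {\cal B}(M_1)$, with $M_1$ covering $M_2$ in the weak-map order and $\dim {\cal B}(M_2)<\dim {\cal B}(M_1)$, yet ${\cal B}(M_2)$ is not a facet of ${\cal B}(M_1)$. A natural place to look is among the matroids of type (c), i.e.\ indecomposable connected matroids that strictly include another connected matroid base system. Example \ref{exa:nonminimal} is a promising candidate: $M_1$ is indecomposable (so ${\cal B}(M_2)$ is not obtained as a halfspace cut of ${\cal B}(M_1)$), but it does admit an included matroid $M_2$ in which several non-original facet-defining inequalities have been imposed simultaneously, forcing ${\cal B}(M_2)$ to have several connected components.

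Assuming we choose $M_1$ and $M_2$ along these lines, I would organize the proof in three steps. First, verify the inclusion ${\cal B}(M_2)\subsetneq {\cal B}(M_1)$ directly from Lemma \ref{lem:r1r2}: compute $r_2(A)\leq r_1(A)$ for all $A\subseteq E$ by reading off the facet-defining inequalities listed in the example and applying Lemma \ref{lem:facetinc} as needed. Second, use the dimension formula $\dim B(M)=|E|-c(M)$ where $c(M)$ is the number of connected components, and check that $c(M_2)\geq c(M_1)+2$. Since any facet of ${\cal B}(M_1)$ has codimension exactly $1$, this immediately gives that ${\cal B}(M_2)$ is not a facet of ${\cal B}(M_1)$ even though its dimension is strictly smaller.

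Third, and this is the crux of the argument, establish the covering relation by ruling out every intermediate matroid $M_3$ with ${\cal B}(M_2)\subsetneq {\cal B}(M_3)\subsetneq {\cal B}(M_1)$. By Lemma \ref{lem:r1r2}, any such $M_3$ has a rank function $r_3$ with $r_2\leq r_3\leq r_1$ pointwise, and the two inequalities are strict in at least one subset each. I would reduce this to a check on facet-defining flats: any non-trivial intermediate rank function must differ from $r_1$ on some facet-defining flat of ${\cal B}(M_1)$, and the candidates are controlled by the (finitely many) flats $A$ with $r_2(A)<r_1(A)$. Using Theorem \ref{thm:facetbase} together with the submodularity machinery of Theorem \ref{thm:facetsub} and the 3-partition/flat-propagation lemmas (Lemmas \ref{lem:2facets}, \ref{lem:3parts}, \ref{lem:forcing1}, \ref{lem:flat2}), one can show that each admissible attempt to relax one of the facet-defining inequalities of $M_2$ either fails submodularity or already forces all the additional constraints imposed in $M_2$, so that $M_3$ must coincide with $M_1$ or $M_2$.

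The main obstacle will be this last step: a brute-force enumeration of candidate rank functions is combinatorially heavy, and even a structural argument requires showing that no single inequality among those that differ between ${\cal B}(M_1)$ and ${\cal B}(M_2)$ can be imposed in isolation while preserving the matroid axioms. The key tool to make this manageable is the indecomposability of $M_1$, together with the forcing lemmas developed in Section \ref{sec:rank3}, which say that once one constraint of the included matroid is imposed, the graph $g(A_1,A_2)$ propagates it to the full set of constraints defining $M_2$. Phrased that way, indecomposability of $M_1$ is exactly what turns the weak-map drop $M_1\succ M_2$ into a covering relation, giving the desired counterexample.
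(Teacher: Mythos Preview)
Your proposal has a genuine gap at the very first step: the example you want to use cannot work.

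First, you propose to take $M_1$ and $M_2$ from Example~\ref{exa:nonminimal}. But the included matroid in that example is \emph{connected} (that is the definition of ``included matroid'' in Section~\ref{sec:rank3}), hence has the same dimension as the ambient matroid. The hypothesis $\dim{\cal B}(M_2)<\dim{\cal B}(M_1)$ of Conjecture~\ref{con:lucas} is therefore not satisfied, and there is nothing to disprove. Your own second step asks for $c(M_2)\geq c(M_1)+2$, which is incompatible with the choice you made in the first paragraph.

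Second, and more seriously, the slogan ``indecomposability of $M_1$ is exactly what turns the weak-map drop $M_1\succ M_2$ into a covering relation'' is false. Indecomposability concerns polytopal decompositions, not the weak-map order. A matroid of type~(c) is by definition indecomposable \emph{and} admits a connected matroid strictly between itself and some smaller matroid; Example~\ref{exa:nonminimal} exhibits precisely such an intermediate connected $M'$. So if you replace your $M_2$ by a genuinely disconnected matroid below $M_1$, that connected $M'$ (or something like it) can sit strictly between $M_2$ and $M_1$, and your covering claim collapses. The forcing lemmas (Lemmas~\ref{lem:forcing1}, \ref{lem:flat2}) do not help here: they propagate constraints \emph{inside a single included matroid}, they do not preclude the existence of one.

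The paper's actual counterexample (Example~\ref{exa:lucascon}) is a different, purpose-built pair on $11$ elements. The hard work is Lemmas~\ref{lem:lucascon1} and~\ref{lem:lucascon2}, which show that this particular $M_1$ admits \emph{no} connected included matroid carrying either of the non-original facet-defining inequalities $(\{a,b,c,d\},1)_\leq$ or $(\{e,f,g,h,i,j,k\},2)_\leq$; any intermediate $M_3\supseteq {\cal B}(M_2)$ would have to carry one of these. This is a targeted non-existence argument for a carefully chosen $M_1$, not a consequence of indecomposability, and it is what your plan is missing.
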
 

We give a counterexample to this conjecture.

\begin{exa}\label{exa:lucascon}
The pair $(M_1,M_2)$ of matroids below is a counterexample to Conjecture \ref{con:lucas} as shown in Theorem \ref{thm:lucascon}.

Let ${\cal B}(M_1)$ be the matroid base system such that the facet-defining flats of rank 2 consist of
$$abk,bce,cdi,adf,bdh,acj,efi,fgc,ghj,hea,egd,fhk,ijb,jke,kig.$$
Note that any two facet-defining flats of rank 2 intersect in at most 1 element.
So these flats define a matroid.
The facet-defining flats of this matroid are illustrated in the left of Figure \ref{fig:lucas}. In this figure, for example, facet-defining flat $\{a,b,k\}$ is represented as edge $\{a,b\}$ with label $k$.
Thus, the 12 flats correspond to the 12 edges in the graph.
Let ${\cal B}(M_2)$ be the matroid base system defined by 
$${\cal B}(M_2)=(E,3)_=\cap (\{a,b,c,d\},1)_\leq\cap (\{e,f,g,h\},1)_\leq\cap (\{e,f,g,h,i,j,k\},2)_\leq \cap (\{i,j\},1)_\leq,$$
as illustrated in the right figure of Figure \ref{fig:lucas}. Note that $M_2$ has two connected components $\{a,b,c,d\}$ and $\{e,f,g,h,i,j,k\}$. Note that ${\cal B}(M_2)\subseteq {\cal B}(M_1)$.
\begin{figure}
\begin{center}
\includegraphics{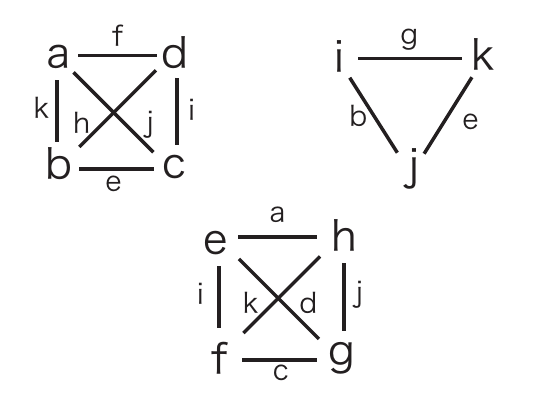}\quad \quad \quad \quad \quad \includegraphics{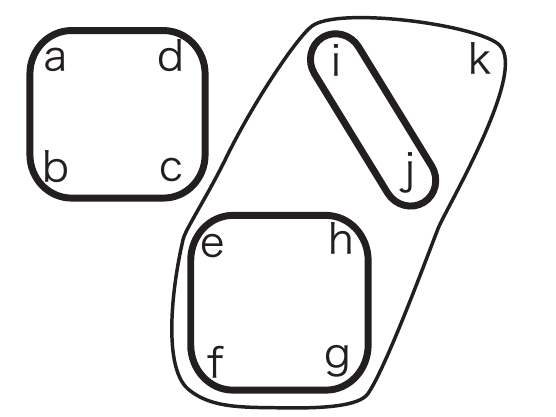}
\caption{A counterexample pair to Lucas's conjecture: $M_1$ and $M_2$}\label{fig:lucas}
\end{center}
\end{figure}
\end{exa}

\begin{lem}\label{lem:lucascon1}
${\cal B}(M_1)$ has no included matroid base system that has $(\{a,b,c,d\},1)_\leq$ as a non-original facet-defining inequality.
\end{lem}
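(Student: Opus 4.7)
The plan is a proof by contradiction. Assume that such an included matroid base system $\mathcal{B}(M')$, with rank function $r'$, exists. Since an included matroid base system is connected by definition, Theorem~\ref{thm:facetbase} forces $\{a,b,c,d\}$ to be a flat of rank one of $M'$ and makes both $M'|\{a,b,c,d\}$ and $M'/\{a,b,c,d\}$ connected; the strategy is to violate one of these properties by applying Lemmas~\ref{lem:forcing1} and~\ref{lem:flat2}.

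First I would compute the graph $g(\{a,b,c,d\},\{e,f,g,h,i,j,k\})$. Among the facet-defining flats of rank two of $M_1$, only $fgc,\,hea,\,egd,\,ijb$ meet $\{a,b,c,d\}$ in exactly one element, and each contributes a single edge, giving the edges $\{f,g\},\,\{e,h\},\,\{e,g\},\,\{i,j\}$. The connected components are therefore $\{e,f,g,h\}$, $\{i,j\}$, and $\{k\}$, so Lemma~\ref{lem:forcing1} yields $r'(\{a,b,c,d,e,f,g,h\})=r'(\{a,b,c,d,i,j\})=r'(\{a,b,c,d,k\})=2$.

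Let $F:=\mathrm{cl}_{M'}(\{a,b,c,d,e,f,g,h\})$, a rank-two flat of $M'$, and set $S:=F\cap\{i,j,k\}$. In the principal case $S=\emptyset$ I would apply Lemma~\ref{lem:flat2} to $F$ with $B=\{i,j,k\}$: a similar enumeration of edges shows that the two nontrivial components of $g(B,F)$ are $\{a,b,c,d\}$ and $\{e,f,g,h\}$, so $\{e,f,g,h\}$ has rank one in $M'$ and, since $F$ contains no element of $\{i,j,k\}$, is actually a parallel class and hence a flat of rank one. A second application of Lemma~\ref{lem:forcing1}, now with $A=\{e,f,g,h\}$, yields the components $\{a,b,c,d\}$ and $\{i,j,k\}$ of $g(A,E-A)$ and therefore $r'(\{e,f,g,h,i,j,k\})=2$; then $r'(\{a,b,c,d\})+r'(E-\{a,b,c,d\})=r'(E)$, so $\{a,b,c,d\}$ is a separator of $M'$ and $M'$ is disconnected, a contradiction.

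It remains to rule out $S\neq\emptyset$. If $S=\{i\}$, Lemma~\ref{lem:flat2} with $B=\{j\}$ produces, via the flat $ijb$, the component $\{i,b\}$, forcing $i$ parallel to $b$ in $M'$ and contradicting that $\{a,b,c,d\}$ is a flat; the case $S=\{j\}$ is symmetric. If $S=\{k\}$, Lemma~\ref{lem:flat2} with $B=\{i,j\}$ joins $\{e,f,g,h\}$ to $\{k\}$ through the flats $jke$ and $kig$ to produce the component $\{e,f,g,h,k\}$, so $k$ becomes parallel to $\{e,f,g,h\}$ in $M'$; then $M'/\{a,b,c,d\}$ has only the two parallel classes $\{e,f,g,h,k\}$ and $\{i,j\}$ and is disconnected, violating Theorem~\ref{thm:facetbase}. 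Finally, when $|S|\ge 2$ the equality $r'(F)=2$ forces $r_{M'/\{a,b,c,d\}}(\{e,f,g,h\}\cup S)=1$, again reducing the contraction to at most two parallel classes (the extreme $S=\{i,j,k\}$ is excluded at once because it would require $F=E$ of rank three). The subtlest point will be the $S=\{k\}$ case, where the contradiction surfaces only after contracting by $\{a,b,c,d\}$ rather than inside $M'$ itself.
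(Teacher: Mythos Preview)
Your proof is correct and follows the same overall strategy as the paper: compute $g(\{a,b,c,d\},\{e,f,g,h,i,j,k\})$, apply Lemma~\ref{lem:forcing1} to force $r'(\{a,b,c,d,e,f,g,h\})=2$, and then run a case analysis on the closure $F$ of this set in $M'$, invoking Lemma~\ref{lem:flat2} in each branch. The one genuine difference is how you close out the non-principal cases. The paper works entirely inside $M'$: for example, when $F=\{a,b,c,d,e,f,g,h,k\}$ it uses Lemma~\ref{lem:ranktwoes} with the original flats $kig$ and $jke$ to get $r'(\{g,k\})=r'(\{k,e\})=1$ and then argues rather loosely that ``eventually \dots\ ${\cal B}(M')$ becomes non-connected.'' You instead extract from Theorem~\ref{thm:facetbase} that $M'/\{a,b,c,d\}$ must be connected, and derive the contradiction by exhibiting a rank-one/rank-one split of that contraction. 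This is a clean shortcut, and your parametrization by $S=F\cap\{i,j,k\}$ makes the case analysis more systematic and complete than the paper's, which spells out only $S=\emptyset$ and $S=\{k\}$ and dismisses the remaining cases with ``and so on.''
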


\begin{proof}
Suppose that $M_1$ has an included matroid base system ${\cal B}(M')$ with $(\{a,b,c,d\},1)_\leq$ as a non-original facet-defining inequality.
Since $g(\{a,b,c,d\},\{e,f,g,h,i,j,k\})$ has edges $\{e,h\},\{e,g\},\{f,g\},\{i,j\}$, it  has connected components $\{e,f,g,h\}$ and $\{i,j\}$.
Therefore the included matroid base system ${\cal B}(M')$ satisfies 
$(\{a,b,c,d,e,f,g,h\},2)_\leq$ and $(\{a,b,c,d,i,j\},2)_\leq$ by Lemma \ref{lem:forcing1}.

Consider the case where $\{a,b,c,d,e,f,g,h\}$ is a flat on the included matroid $M'$.
$\{e,f,g,h\}$ is a connected component of $g(\{i,j,k\},\{a,b,c,d,e,f,g,h\})$ since
this graph has edges $\{e,f\},\{f,h\},\{h,g\}$.
Therefore the included matroid base system satisfies $(\{e,f,g,h\},1)_\leq$ by Lemma \ref{lem:flat2}.
When $\{e,f,g,h\}$ is a flat of rank 1 on $M'$, the included matroid base system ${\cal B}(M')$ satisfies $(\{e,f,g,h,i,j,k\},2)_\leq$ by Lemma \ref{lem:forcing1}, 
which contradicts the connectivity of the included matroid ${\cal B}(M')$.
When $\{e,f,g,h\}$ is not a flat,
the included matroid base system ${\cal B}(M')$ satisfies $(\{e,f,g,h,i,j\},1)_\leq$ or $(\{e,f,g,h,k\},1)_\leq$, which contradicts the connectivity of the included matroid $M'$.

Consider the case where $\{a,b,c,d,e,f,g,h\}$ is not a flat of the included matroid $M'$. Then there exists a flat of rank 2 including $\{a,b,c,d,e,f,g,h\}$. Consider the case where the included matroid  ${\cal B}(M')$ has a flat $\{a,b,c,d,e,f,g,h,k\}$ of rank 2. We have $r'(\{g,k\})=1$ since $r'(\{i,k,g\})\leq 2$ and Lemma \ref{lem:ranktwoes} where $r'$ is the rank function of $M'$. Similarly $r'(\{k,e\})=1$. Eventually, the included matroid ${\cal B}(M')$ becomes non-connected to satisfy submodularity, a contradiction. So this case cannot arise. The case where $\{a,b,c,d,e,f,g,h,i\}$ is a flat can be similarly proved, and so on.
\end{proof}

\begin{lem}\label{lem:lucascon2}
${\cal B}(M_1)$ has no included matroid base system that has $(\{e,f,g,h,i,j,k\},2)_\leq$ as a non-original facet-defining inequality.
\end{lem}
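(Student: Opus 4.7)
The plan is to derive a contradiction by imitating Lemma~\ref{lem:lucascon1}. Suppose ${\cal B}(M_1)$ has an included matroid base system ${\cal B}(M')$ with rank function $r'$ such that $A:=\{e,f,g,h,i,j,k\}$ is a non-original rank-$2$ facet-defining flat. I will alternately apply Lemma~\ref{lem:flat2} (every connected component of $g(B,F)$ has rank $1$ in $M'$ when $F$ is a rank-$2$ flat of $M'$) and Lemma~\ref{lem:forcing1} ($F\cup C$ has rank $2$ in $M'$ when $F$ is a rank-$1$ flat of $M'$ and $C$ is a connected component of $g(F,B)$) until $A^c$ is forced to be a separator of $M'$, contradicting the connectivity required of an included matroid base system.

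First I apply Lemma~\ref{lem:flat2} to $A$ with $B=A^c=\{a,b,c,d\}$. Reading off the facet-defining flats of $M_1$ that meet $A^c$ and contain two elements of $A$, the graph $g(A^c,A)$ acquires edges $\{f,g\},\{h,e\},\{e,g\},\{i,j\}$ (from $fgc, hea, egd, ijb$), and thus connected components $\{e,f,g,h\}$, $\{i,j\}$, $\{k\}$, each of rank $1$ on $M'$. Since $A$ is a facet, $M'|A$ must be connected, so its simplification has at least three parallel classes; these three components are therefore the three distinct parallel classes of $M'|A$. Writing $P_1:=\{e,f,g,h\}$, since $\mbox{cl}_{M'}(\{e\})\subseteq \mbox{cl}_{M'}(A)=A$ and the parallel class of $e$ inside $A$ is $P_1$, the set $P_1$ is a flat of rank $1$ in $M'$.

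Now I apply Lemma~\ref{lem:forcing1} to this rank-$1$ flat $P_1$. The graph $g(P_1,E\setminus P_1)$ has edges $\{b,c\},\{a,d\},\{b,d\},\{j,k\},\{i,k\}$ (from $bce, adf, bdh, jke, kig$), with connected components $\{a,b,c,d\}$ and $\{i,j,k\}$, so $F':=P_1\cup A^c=\{e,f,g,h,a,b,c,d\}$ has rank $2$ in $M'$. I verify that $F'$ is actually a flat: for every $z\in\{i,j,k\}$, the elements $e$ and $z$ lie in different parallel classes of $M'|A$, so $\mbox{cl}_{M'}(\{e,z\})$ is a rank-$2$ flat contained in $A$ which by Lemma~\ref{lem:ranktwoes} must equal $A$; since $a\notin A$, this gives $r'(\{e,z,a\})=3$ and hence $z\notin \mbox{cl}_{M'}(F')$. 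Applying Lemma~\ref{lem:flat2} once more, now to the rank-$2$ flat $F'$ with $B=E\setminus F'=\{i,j,k\}$, the graph $g(\{i,j,k\},F')$ has edges $\{a,b\},\{c,d\},\{a,c\},\{e,f\},\{g,h\},\{f,h\}$ (from $abk, cdi, acj, efi, ghj, fhk$), whose connected components are $\{a,b,c,d\}$ and $\{e,f,g,h\}$. Consequently $r'(A^c)=1$, and together with $r'(A)=2$ and $r'(E)=3$ this yields $r'(A)+r'(A^c)=r'(E)$, so $A^c$ is a separator of $M'$ and $M'=M'|A\oplus M'|A^c$ is disconnected, the desired contradiction.

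The main obstacle is the careful enumeration of edges in each of the three instances of $g(\cdot,\cdot)$, and verifying at the intermediate step that the rank-$2$ set $F'$ is genuinely a flat of $M'$ rather than only a set of rank $2$; everything else is a direct alternation between Lemmas~\ref{lem:flat2} and~\ref{lem:forcing1} closely parallel to the proof of Lemma~\ref{lem:lucascon1}.
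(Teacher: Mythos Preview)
Your proof is correct and follows essentially the same route as the paper's: apply Lemma~\ref{lem:flat2} to the rank-$2$ flat $A=\{e,f,g,h,i,j,k\}$ to force $r'(\{e,f,g,h\})=1$, then Lemma~\ref{lem:forcing1} to force $r'(\{a,b,c,d,e,f,g,h\})=2$, then Lemma~\ref{lem:flat2} again to force $r'(\{a,b,c,d\})=1$, and conclude disconnectedness. Your version is in fact tidier than the paper's in two places: you explicitly verify that $P_1=\{e,f,g,h\}$ is a \emph{flat} of $M'$ (using that $M'|A$ is connected of rank~$2$, hence has exactly three parallel classes matching the three components), and you prove directly that $F'=\{a,b,c,d,e,f,g,h\}$ is a flat, whereas the paper handles the latter by a case split on whether or not $F'$ is a flat. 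The edge computations in all three graphs are correct.
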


\begin{proof}
Assume that $M_1$ has an included matroid base system ${\cal B}(M')$ with $(\{e,f,g,h,i,j,k\},2)_\leq$ as a non-original facet-defining inequality.
Since $g(\{a,b,c,d\},\{e,f,g,h,i,j,k\})$ has two connected components $\{e,f,g,h\}$ and $\{i,j\}$,
the included matroid base system ${\cal B}(M')$ satisfies $(\{e,f,g,h\},1)_\leq$ and $(\{i,j\},1)_\leq$ by Lemma \ref{lem:flat2}.
Since $g(\{e,f,g,h\},\{a,b,c,d\})$ is connected, the included matroid base system ${\cal B}(M')$ satisfies $(\{a,b,c,d,e,f,g,h\},2)_\leq$  by Lemma \ref{lem:forcing1}.
When $(\{a,b,c,d,e,f,g,h\},2)_\leq$ is a facet-defining flat, the included matroid base system ${\cal B}(M')$ satisfies $(\{a,b,c,d\},1)_\leq$ by Lemma \ref{lem:flat2},
 which contradicts the connectivity of the included matroid $M'$.
When $(\{a,b,c,d,e,f,g,h\},2)_\leq$ is not a facet-defining flat of ${\cal B}(M')$, the included matroid base system ${\cal B}(M')$ satisfies $(\{a,b,c,d,e,f,g,h,k\},2)_\leq$ 
or $(\{a,b,c,d,e,f,g,h,i,j\},2)_\leq$, which contradicts the connectivity of the included matroid $M'$.
\end{proof}

\begin{thm}\label{thm:lucascon}
The pair of $M_1$ and $M_2$ is a counterexample to Conjecture \ref{con:lucas}.
\end{thm}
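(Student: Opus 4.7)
The plan is to verify four claims: (a) ${\cal B}(M_2)\subseteq{\cal B}(M_1)$, (b) $\dim B(M_1)>\dim B(M_2)$, (c) ${\cal B}(M_2)$ is not a facet of $B(M_1)$, and (d) $M_1$ covers $M_2$ in the weak-map order. For (a), apply Lemma \ref{lem:facetinc} to each of $M_1$'s 15 original facet-defining inequalities. For (b), $M_1$ is connected on the 11-element ground set, so $\dim B(M_1)=10$, while $M_2$ has two connected components, so $\dim B(M_2)=9$. For (c), by Theorem \ref{thm:facetbase}, every facet of $B(M_1)$ equals $B(M_1)\cap(A,r_{M_1}(A))_=$ with connected-component partition $\{A,A^c\}$; the only candidate matching $M_2$ is $\{\{a,b,c,d\},\{e,\ldots,k\}\}$, but $r_{M_1}=3$ on both parts while $M_2$'s component ranks are $1$ and $2$.

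The core argument is (d). Suppose, for contradiction, $M_2\subsetneq M'\subsetneq M_1$ in the weak-map order. If $M'$ is disconnected, then because each base of $M_2$ lies in ${\cal B}(M')$ and distributes elements as $(1,2)$ across $\{\{a,b,c,d\},\{e,\ldots,k\}\}$, the components of $M'$ must coarsen this partition; the only nontrivial option is the same partition with ranks $(1,2)$. Using the four rank-$2$ flats $\{a,e,h\}$, $\{d,e,g\}$, $\{c,f,g\}$, $\{b,i,j\}$ of $M_1$ (the only ones with exactly one element in $\{a,b,c,d\}$ and a pair inside a single $M_2$-parallel class of $\{e,\ldots,k\}$), I would argue that every strict refinement of the parallel-class partition $\{\{e,f,g,h\},\{i,j\},\{k\}\}$ of $M_2|\{e,\ldots,k\}$ forces a new basis pair of $M'|\{e,\ldots,k\}$ whose extension by some $z\in\{a,b,c,d\}$ lies in one of those four flats, violating ${\cal B}(M')\subseteq{\cal B}(M_1)$. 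Hence $M'|\{e,\ldots,k\}=M_2|\{e,\ldots,k\}$ and $M'|\{a,b,c,d\}=U_{1,4}$, giving $M'=M_2$, a contradiction.

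If $M'$ is connected, then $M'$ is an included matroid of $M_1$, and I would exhibit a non-original facet-defining inequality of $M'$ that contradicts Lemma \ref{lem:lucascon1} or \ref{lem:lucascon2}. The driver is the upward monotonicity of connectedness under the weak-map order (contrapositive of the direct-sum description of disconnectedness): if ${\cal B}(N_1)\subseteq{\cal B}(N_2)$ on the same ground set with matching rank and $N_1$ is connected, then so is $N_2$. Since $M_2|\{a,b,c,d\}=U_{1,4}$ and $M_2|\{e,\ldots,k\}$ are both connected, whenever $r_{M'}(\{a,b,c,d\})=1$ both $M'|\{a,b,c,d\}$ and $M'/\{a,b,c,d\}$ inherit connectedness, so by Theorem \ref{thm:facetbase} the inequality $(\{a,b,c,d\},1)_\leq$ is a non-original facet-defining inequality of $M'$, contradicting Lemma \ref{lem:lucascon1}. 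Symmetrically, $r_{M'}(\{e,\ldots,k\})=2$ makes $(\{e,\ldots,k\},2)_\leq$ facet-defining for $M'$, contradicting Lemma \ref{lem:lucascon2}.

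The main obstacle is the residual subcase $r_{M'}(\{a,b,c,d\})\geq 2$ and $r_{M'}(\{e,\ldots,k\})=3$, in which neither of the two lemma-covered inequalities is even valid for $M'$. My plan here is to examine the parallel-class partition of $M'$: it must strictly refine the $M_2$-parallel-class partition $\{\{a,b,c,d\},\{e,f,g,h\},\{i,j\},\{k\}\}$ (else $M'=M_2$) and strictly coarsen the singleton partition of the simple matroid $M_1$ (else $M'=M_1$), so some $M_2$-class inside $\{a,b,c,d\}$, $\{e,f,g,h\}$, or $\{i,j\}$ strictly splits in $M'$. By restricting and contracting $M'$ along such a split, I expect to land on a smaller connected included matroid of $M_1$ whose non-original facet-defining inequality again falls under Lemma \ref{lem:lucascon1} or \ref{lem:lucascon2}, with submodularity against the 15 rank-$2$ flats of $M_1$ as the propagation tool. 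Rigorously discharging this last reduction is the most delicate technical step of the proof.
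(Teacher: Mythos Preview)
Your overall strategy—directly verifying that $M_1$ covers $M_2$ by ruling out any intermediate $M'$—is different from the paper's. The paper does not attempt to show covering directly; it instead assumes the conjecture holds and asserts that then some facet $M_3$ of ${\cal B}(M_1)$ with ${\cal B}(M_2)\subseteq{\cal B}(M_3)$ must exist, and obtains a contradiction from Lemmas~\ref{lem:lucascon1} and~\ref{lem:lucascon2}. Your route is more explicit about what has to be checked, and your use of upward monotonicity of connectedness to certify that $(\{a,b,c,d\},1)_\le$ (resp.\ $(\{e,\ldots,k\},2)_\le$) is facet-defining for $M'$ in the corresponding rank subcases is a clean device not present in the paper.

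However, your residual subcase is a genuine gap, and the plan you sketch for it does not work. The key error is the claim that the parallel-class partition of $M'$ must ``strictly coarsen the singleton partition of the simple matroid $M_1$ (else $M'=M_1$)''. This is false: a simple matroid $M'$ with ${\cal B}(M')\subsetneq{\cal B}(M_1)$ is entirely possible—one merely imposes an additional rank-$2$ flat (or enlarges an existing one) without creating any parallel pair. In particular, nothing in the hypotheses forces $M'$ to have a nontrivial rank-$1$ flat, so there may be no ``split'' to restrict or contract along, and your reduction to a smaller connected included matroid does not get off the ground. Since in this subcase neither $(\{a,b,c,d\},1)_\le$ nor $(\{e,\ldots,k\},2)_\le$ is even valid for $M'$, Lemmas~\ref{lem:lucascon1} and~\ref{lem:lucascon2} give no direct purchase; you would need an independent argument (e.g., a direct enumeration or a structural obstruction specific to the fifteen rank-$2$ flats of $M_1$) to exclude any connected $M'$ with $r_{M'}(\{a,b,c,d\})\ge2$ and $r_{M'}(\{e,\ldots,k\})=3$ strictly between $M_2$ and $M_1$. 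As written, this case is not discharged.
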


\begin{proof}
Checking ${\cal B}(M_2)\subseteq {\cal B}(M_1)$ is straightforward by Lemma \ref{lem:facetinc}. Assume that Conjecture \ref{con:lucas} holds.
Then there exists a matroid base system $M_3$ such that ${\cal B}(M_2)\subseteq {\cal B}(M_3)\subsetneq {\cal B}(M_1)$ and ${\cal B}(M_3)$ is a facet of ${\cal B}(M_1)$.
Since ${\cal B}(M_2)\subseteq (\{a,b,c,d\},1)_=$, ${\cal B}(M_3)$ has a facet-defining inequality $(\{a,b,c,d\},1)_\leq$ or $(\{e,f,g,h,i,j,k\},2)_\leq$.
By Lemma \ref{lem:lucascon1} and Lemma \ref{lem:lucascon2}, 
${\cal B}(M_1)$ has no included matroid with ${\cal B}(M_2)$ as a facet.
\end{proof}

\bibliographystyle{springer}
\bibliography{all}

\end{document}